\numberwithin{equation}{section}
\setlist{  
  listparindent=\parindent,
  parsep=0pt,
}
\newcolumntype{C}[1]{>{\Centering}m{#1}}
\newcommand*{\algrule}[1][\algorithmicindent]{%
  \makebox[#1][l]{%
    \hspace*{.2em}
    \vrule height .75\baselineskip depth .25\baselineskip
  }
}
\def\ALG@printindent{%
    \ifnum \theALG@nested>0
    \ifx\ALG@text\ALG@x@notext
    \else
    \unskip
    \ALG@printindent@tempcnta=1
    \loop
    \algrule[\csname ALG@ind@\the\ALG@printindent@tempcnta\endcsname]%
    \advance \ALG@printindent@tempcnta 1
    \ifnum \ALG@printindent@tempcnta<\numexpr\theALG@nested+1\relax
    \repeat
    \fi
    \fi
}
\patchcmd{\ALG@doentity}{\noindent\hskip\ALG@tlm}{\ALG@printindent}{}{\errmessage{failed to patch}}
\patchcmd{\ALG@doentity}{\item[]\nointerlineskip}{}{}{} 
\newcommand{\pdual}[1]{\left\langle#1\right\rangle}
\newcommand{\prom}[1]{ \{\!\!\{ {#1} \}\!\!\} }
\newcommand{\jump}[1]{ [ \! [ {#1} ] \! ] }
\newcommand{\inner}[2]{ (\!( {#1} , {#2} )\!) }
\newcommand{\innerh}[2]{ (\!( {#1} , {#2} )\!)_h }
\newcommand{\normh}[1]{ \triple{#1}_h }
\newcommand{\kap}{\boldsymbol{\kappa}}
\newcommand{\vertiii}[1]{{\left\vert\kern-0.25ex\left\vert\kern-0.25ex\left\vert #1 
    \right\vert\kern-0.25ex\right\vert\kern-0.25ex\right\vert}}
\newcommand{\J}{J}    
\newcommand{\bdiv}{\textbf{div}}
\newcommand{\bn}{\boldsymbol{n}}
\newcommand{\bq}{\boldsymbol{q}}
\newcommand{\bt}{\boldsymbol{t}}
\newcommand{\bv}{\boldsymbol{v}}
\newcommand{\bx}{\boldsymbol{x}}
\newcommand{\by}{\boldsymbol{y}}
\newcommand{\bH}{\boldsymbol{H}}
\newcommand{\bI}{\boldsymbol{I}}
\newcommand{\bL}{\boldsymbol{L}}
\newcommand{\bV}{\boldsymbol{V}}
\newcommand{\normalh}{{\boldsymbol{\nu}_h}}
\newcommand{\bsy}[1]{\boldsymbol{#1}}
\newcommand{\mbf}[1]{\boldsymbol{#1}}
\newcommand{\compD}{\Omega_h}
\newcommand{\erroru}{\varepsilon^u}
\newcommand{\errorq}{\boldsymbol{\varepsilon}^{\boldsymbol{q}}}
\newcommand{\triple}[1]{|\!|\!|{#1}|\!|\!|}
\newcommand{\SDtN}{S_1}
\newcommand{\SNtD}{S_2}
\newcommand{\Sh}{S_h}
\newcommand{\Sn}{S_n}
\newcommand{\T}{T}
\newcommand{\Tomega}{T_\omega}
\newcommand{\mc}[1]{\mathcal{#1}}
\newcommand{\md}[1]{\mathds{#1}}
\definecolor{dgreen}{rgb}{0.0, 0.5, 0.0}
\newtheorem{thm}{Theorem}
\newtheorem{lem}{Lemma}
\newtheorem{crl}{Corollary}
\title{Afternote to \textit{Coupling at a distance}:\\convergence analysis and \textit{a priori} error estimates   \\ {\small\textit{Dedicated to the memory of Francisco--Javier Sayas.}}}
\author[1,2,4]{Nestor S\'anchez}
\author[3]{Tonatiuh S\'anchez-Vizuet}
\author[1,2]{Manuel E. Solano}
\affil[1]{{\small Departamento de Ingenier\'ia Matem\'atica, Universidad de Concepci\'on, Concepci\'on, Chile.}}
\affil[2]{{\small Centro de Investigaci\'on 
en Ingenier\'ia Matem\'atica (CI$^2$MA), Universidad de Concepci\'on, Concepci\'on, Chile.}}
\affil[3]{{\small Department of Mathematics, The University of Arizona, USA.}}
\affil[4]{{\small Instituto de Matem\'aticas, Unidad Juriquilla. Universidad Nacional Aut\'onoma de M\'exico.}}
\date{}
\begin{document}

\maketitle
\begin{abstract}
In their article \textit{``Coupling at a distance HDG and BEM''} \cite{CoSaSo2012}, Cockburn, Sayas and Solano proposed an iterative coupling of the hybridizable discontinuous Galerkin method (HDG) and the boundary element method (BEM) to solve an exterior Dirichlet problem. The novelty of the numerical scheme consisted of using a computational domain for the HDG discretization  whose boundary did not coincide with the coupling interface. In their article, the authors provided extensive numerical evidence for convergence, but the proof of convergence and the error analysis remained elusive at that time. In this article we fill the gap by proving the convergence of a relaxation of the algorithm and providing \textit{a priori} error estimates for the numerical solution.
\end{abstract}

{\bf Key words}: Hybridizable discontinuous Galerkin (HDG), boundary element Method (BEM), coupling HDG-BEM, curved interface,  transfer path method.

\noindent
{\bf Mathematics Subject Classifications (2020)}: 65N15, 65N30, 65R20.
\section{Introduction}
The goal of this article is to conclude the work started by Cockburn, Sayas and Solano in the article \textit{Coupling at a distance} \cite{CoSaSo2012}, where an iterative solution method for a classic exterior elliptic problem was introduced. The proposed scheme amounted to a Schur complement-style algorithm that alternates between a Hybridizable Discontinuous Galerkin Method (HDG) for an interior problem and the Boundary Element Method (BEM) for an exterior problem. At the time of publication, the novelty of the method resided in the use of \textit{non-touching} grids for the discretization of each of the two problems. The ready availability of two separate, \textit{uncoupled}, codes for each of the discretization methods and the eagerness to show the viability of such a non-touching coupling led to the choice of an iterative alternating procedure---even though the problem in question is in fact linear.

When \cite{CoSaSo2012} was published, the technique for transferring information between the two grids had only been recently incorporated into the HDG literature \cite{CoSo2012} and, despite the fact that convincing numerical evidence of convergence at an optimal rate was provided, a rigorous analysis of the coupled scheme proved elusive at the time. A few years after \textit{Coupling at a distance} appeared, a method for the analysis of HDG discretizations involving the transfer technique---that we now like to call the \textit{transfer path method}---was developed in \cite{CoQiuSo2014} for interior elliptic problems. Since then, both the transfer technique and the analysis method have been successfully employed for the study of linear \cite{OySoZu2020,SoVa2019,SoVa2022}, and non-linear \cite{OySoSu2021,SaSaSo2021,SaSaSo:2021a,SaSo2019,SaCeSo2020} interior problems, as well as problems with interfaces \cite{QiSoVe2016,SoTeNgPe2021}, however the analysis of the HDG-BEM coupling had fallen by the wayside and remained unfinished.

The current special issue honoring Francisco--Javier Sayas, one of the co-authors of the original article, seemed like the perfect venue for the missing analysis. In that sense, the present communication shall not be considered a novel contribution, but rather the conclusion, long overdue, of the original work, an after-note to the original work \textit{Coupling at a distance}. With that in mind, we will stick to the iterative alternating procedure proposed in \cite{CoSaSo2012}, even if a more efficient monolithic approach where the HDG and BEM discrete systems---along with the discrete coupling terms---are solved simultaneously is possible. The study of such a monolithic scheme applied to nonlinear problems is the subject of ongoing work that will be communicated in a separate publication \cite{SaSaSo:2021b}.

{The method proposed in \cite{CoSaSo2012}, rather than approaching the problem as a single coupled unit, follows the spirit of domain decomposition methods. It relies on an iterative approximation of a Dirichlet to Neumann mapping through the independent solution of an interior and an exterior problem that communicate through their Dirichlet and Neumann traces. Since these two problems are dealt with independent solvers, we will analyze their discretizations separately. After establishing the well posedness of the independent discretizations, we will then prove that, at the discrete level, the alternating solution of an interior Dirichlet and (with HDG) an exterior Neumann problem (with BEM) converges to the solution of the original unbounded problem. This latter result constitutes the main contribution of this article.}

We will describe the problem setting and its reformulation as a system of coupled interior/exterior problems at the continuous level in Section \ref{sec:ContinuousFormulation}. The discretizations of the interior problem and the boundary integral formulation for the exterior problem are described respectively in sections \ref{sec:HDG} and \ref{sec:BEM}. Finally, in Section \ref{sec:Coupling}, we show that it is possible to define a relaxation of the iterative process presented in \cite{CoSaSo2012}, alternating between the solution of the interior and the boundary problems, that converges to the solution of the original problem.

\section{Continuous Formulation}\label{sec:ContinuousFormulation}
\subsection{Problem setting}
Consider a bounded domain $\Omega_0 \subset {\mathds{R}^2}$ that has a smooth parametrizable boundary that will be denoted by $\Gamma_0:= \partial \Omega_0$. We will denote the unbounded complement of its closure by $\Omega_0^c =: \md{R}^d \setminus \overline{\Omega_0}$. In this chapter, we will be concerned with the analysis of a discretization for the following diffusion problem 
	\begin{subequations}\label{eq:ext-diff-problem}
	\begin{align}
	&& && \nabla \cdot \bq^{\text{tot}} &= f & & \text{ in }  \Omega_0^c , && 
	\label{eq:ext-diff-problem_1} \\
	&& && \bq^{\text{tot}} + \boldsymbol{\kappa}\, \nabla u^{\text{tot}} &= 0 &  &\text{ in } \Omega_0^c , && 
	\label{eq:ext-diff-problem_2} \\
	&& && u^{\text{tot}} &= u_0 & & \text{ on } \Gamma_0, && 
	\label{eq:ext-diff-problem_3}\\
	&& && {u^{\text{tot}}} &= \mathcal O(1) & & \text{ as } \boldsymbol x \to \infty. && \label{eq:ext-diff-problem_4}
	\end{align}
	\end{subequations}
The function $f$ will be taken to be compactly supported and  square integrable on $\Omega^c_0$. The diffusion coefficient $\kap$ is a strictly positive matrix-valued function such that, denoting the identity matrix is as $\mathbf I$, the difference $(\mathbf I-\boldsymbol{\kappa})$ is compactly supported in $\Omega_0^c$. This condition implies that outside of $\text{supp}(\mathbf I-\boldsymbol{\kappa})$ equations \eqref{eq:ext-diff-problem_1} and \eqref{eq:ext-diff-problem_2} in fact coincide with Poisson's equation. We will also require that there exist positive constants $\underline{\kap}$ and $\overline{\kap}$ such that, for any component function $\kappa_{ij}$ of $\boldsymbol\kappa$ it holds that
    \begin{equation*}
    \underline{\kap} \leq \kappa_{ij}(\bx) \leq\overline{\kap} \qquad \forall \, \bx \in \Omega.
    \end{equation*}	
The Dirichlet boundary data $u_0$ will be considered to be an element of the trace space $H^{1/2}(\Gamma_0)$. The radiation condition at infinity \eqref{eq:ext-diff-problem_4} is equivalent to assuming that there is a constant $u_{\infty}$ such that $u=u_{\infty} + \mc{O}(|\bx|^{-1})$   \cite{McLean2002}.

\textcolor{red}{}

\begin{figure}[tb]
\centering 
\includegraphics[width=0.4\linewidth]{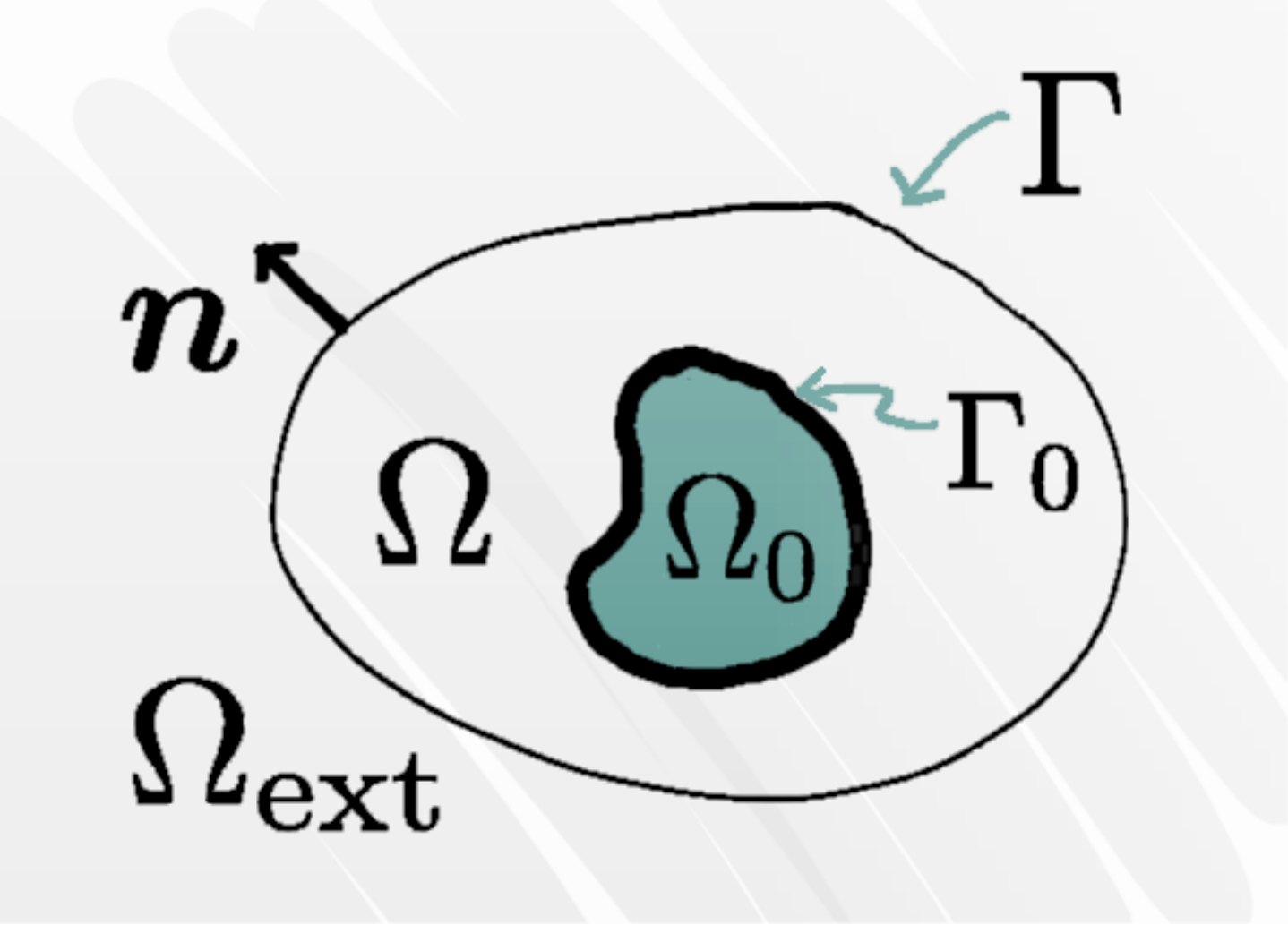} \qquad \qquad
\includegraphics[width=0.4\linewidth]{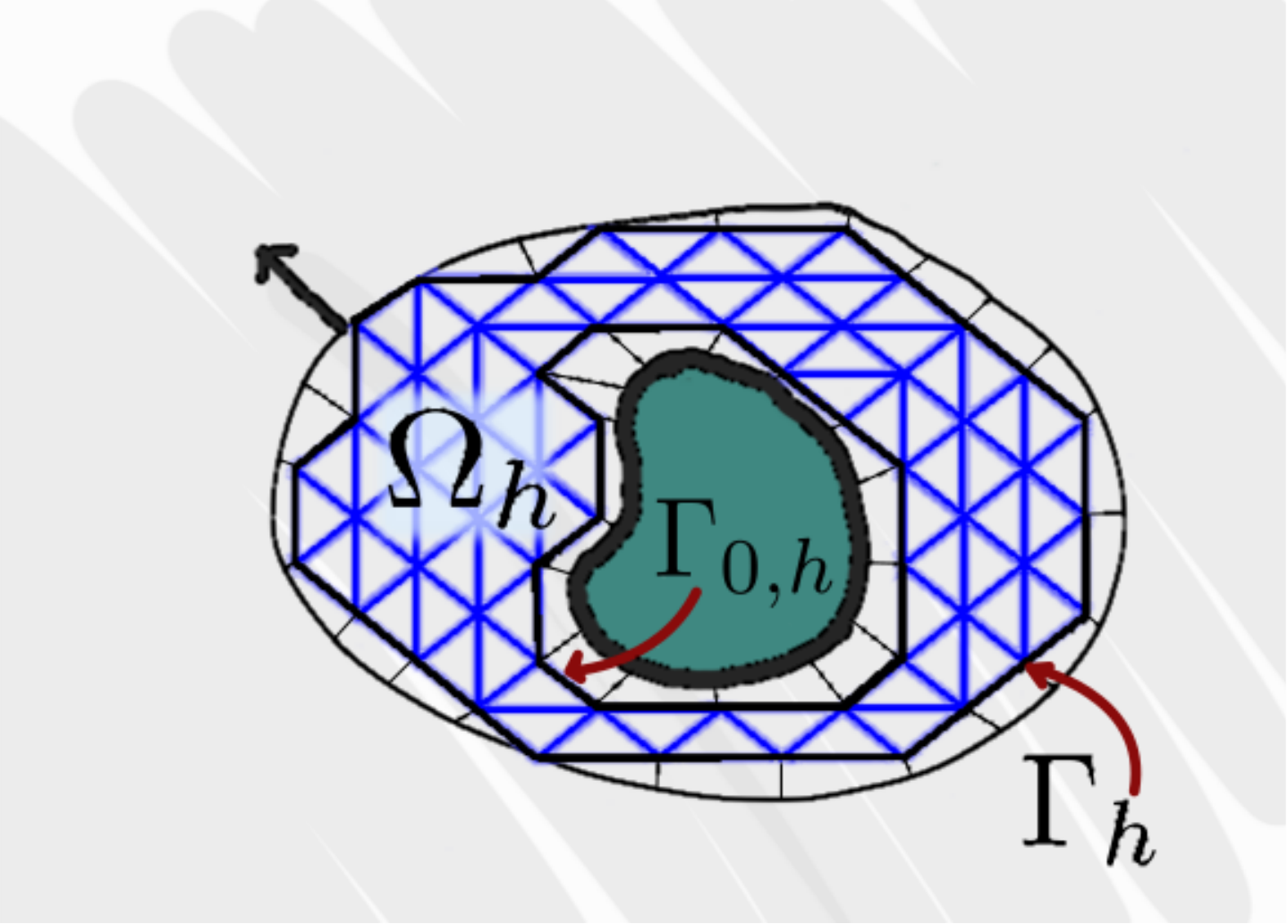}
\caption{Left: The artificial boundary $\Gamma$ splits the domain of definition of Problem \eqref{eq:ext-diff-problem} into an unbounded region $\Omega_{\text{ext}}$ and a bounded annular domain $\Omega$. Right: The computational domain $\Omega_h$ is discretized by an un-fitted triangulation (blue), with boundary $\Gamma_h\cup\Gamma_{0,h}$. }\label{fig:Geometry}
\end{figure}
%

\subsection{Interior and exterior problems}

To deal with the unboundedness of the domain, later on we will make use of an integral representation that will reduce the computations to a bounded domain. To this avail, we introduce an artificial, smoothly parametrizable interface $\Gamma$ enclosing $\Omega_0$, the support of $f$ and the support of $(\mathbf I - \boldsymbol\kappa)$. We will also require that $\Gamma\cap\Gamma_0=\varnothing$. The domain interior to $\Gamma$ will be denoted $\Omega$, while the unbounded complementary region will be denoted $\Omega_{\text{ext}}$. The boundary of $\Omega$ will be denoted as $\partial\Omega$ and consists of two disjoint components: the artificial boundary $\Gamma$ and the original problem boundary $\Gamma_0$, so that $\partial \Omega = \Gamma \cup \Gamma_0$.
We will denote the unit normal vector to $\partial \Omega$, pointing in the direction of $\Omega_{\text{ext}}$ for points in $\Gamma$ and in the direction of $\Omega_0$ for points in $\Gamma_0,$ by $\boldsymbol n$. This geometric decomposition, depicted in Figure \ref{fig:Geometry}, splits our region of interest into two disjoint domains and allows us to rewrite the problem \eqref{eq:ext-diff-problem}  in terms of an interior and an exterior problem coupled by continuity conditions at the artificial boundary $\Gamma$.

Since we aim to use an integral equation formulation, for the exterior problem we will prefer a second order formulation and will eliminate $\boldsymbol q^{\text{ext}}$ from the system. We will represent the solutions to \eqref{eq:ext-diff-problem} as the superposition
\begin{equation*}
u^{\text{tot}} = u + u^{\text{ext}} \;\;\text{ and }\;\; \boldsymbol q^{\text{tot}} = \boldsymbol q +  \nabla u^{\text{ext}},
\end{equation*}
where the functions $u$ and $\boldsymbol q$ are supported in $\Omega$, while $u^{\text{ext}}$ is supported in $\Omega_{\text{ext}}$.
The pair $(u,\boldsymbol q)$ satisfies the interior problem
	\begin{subequations}\label{eq:interior-problem}
	\begin{align}
	&& && \nabla \cdot \bq &= f & & \text{ in }  \Omega, && \\
	&& && \bq + \boldsymbol{\kappa}\, \nabla u &= 0 &  &\text{ in } \Omega ,&& \\
	\label{eq:ContinuityU}
	&& && u &= g & & \text{ on } \Gamma, && \\
	\label{eq:ContinuityQN}
	&& && \boldsymbol q\cdot\boldsymbol n & = \lambda & & \text{ on } \Gamma, && \\
	&& && u &= u_0 & & \text{ on } \Gamma_0. &&
	\end{align}
	\end{subequations}
On the other hand, the exterior function $u^{\text{ext}}$ satisfies
	\begin{subequations}\label{eq:exterior_problem}
	\begin{align}
	&& && -\Delta
	\label{eq:external}
	u^{\text{ext}} &= 0 & & \text{ in }  \Omega_{\text{ext}}, && \\
	\label{eq:ContinuityUext}
	&& && u^{\text{ext}} &= g & & \text{ on } \Gamma, && \\
	\label{eq:ContinuityQNext}
	&& && \nabla u^{\text{ext}}\cdot\boldsymbol n & = -\lambda & & \text{ on } \Gamma, && \\
	&& &&{u^{\text{tot}}} & = \mathcal O(1)  & & \text{as $|\boldsymbol x|\to\infty$ .}&&
	\end{align}
	\end{subequations}

Above, the boundary value $g\in H^{1/2}(\Gamma)$ corresponds to the trace of $u^{\text{tot}}$ over the artificial boundary $\Gamma$, while $\lambda\in H^{-1/2}(\Gamma)$ is the value of the normal flux. These two functions are unknown at this point and will have to be retrieved as part the solution process. However, the knowledge of $g$ (resp.  $\lambda$) is enough to fully determine the solution to \eqref{eq:interior-problem} or \eqref{eq:exterior_problem} considered as independent problems---as long as the equation containing $\lambda$ (resp. $g$) is removed from the system. This observation will motivate the alternating solution scheme to be described in Section \ref{sec:Coupling}.

\subsection{Boundary integral formulation for the exterior problem.}
We will now reformulate \eqref{eq:exterior_problem} as a boundary integral equation. To do that, we will make use of some standard results from potential theory; we refer the reader interested in further details to the classic references \cite{HsWe:2004,McLean2002} for a comprehensive  account, or to \cite{HsStWe2017} for a more concise treatment.

We start by introducing the \textit{single layer} and \textit{double layer} potentials defined respectively for $\eta\in H^{1/2}(\Gamma)$, $\mu\in H^{-1/2}(\Gamma)$ and $\boldsymbol x\in \mathbb R^2\setminus\Gamma$ as
\begin{alignat*}{6}
\mathcal S\mu(\boldsymbol x) :=\,& \int_\Gamma G(\boldsymbol x,\boldsymbol y)\mu(\boldsymbol y)\,d\Gamma_{\boldsymbol y} \qquad&& \text{(Single layer)}, \\
\mathcal D\eta(\boldsymbol x) :=\,& \int_\Gamma \partial_{\boldsymbol n(\boldsymbol y)}G(\boldsymbol x,\boldsymbol y)\eta(\boldsymbol y)\,d\Gamma_{\boldsymbol y} \qquad&& \text{(Double layer)},
\end{alignat*}
where $G(\boldsymbol x,\boldsymbol y)$ is the Green function for Poisson's equation. The functions defined by these two potentials satisfy Equation \eqref{eq:external}, and the following \textit{jump conditions}
\[
\jump{\mathcal S\mu} : = 0\,,  \qquad \jump{\nabla\mathcal S\mu} : = \mu\,, \qquad 
\jump{\mathcal D\eta} : = -\eta\,,  \qquad \jump{\nabla\mathcal D\eta} : = 0\,, 
\]
where the \textit{jump} operator is defined for $\boldsymbol y\in\Gamma$ and scalar and vector functions $v$ and $\boldsymbol v$ respectively as
\begin{equation}\label{eq:JumpOperator}
\jump{v} : = \lim_{\epsilon\to 0}\left(v(\boldsymbol y - \epsilon\boldsymbol n)-v(\boldsymbol y + \epsilon\boldsymbol n)\right) \quad \text{and} \quad
\jump{\boldsymbol v} : = \lim_{\epsilon\to 0}\left(\boldsymbol v(\boldsymbol y - \epsilon\boldsymbol n)-\boldsymbol v(\boldsymbol y + \epsilon\boldsymbol n)\right)\cdot\boldsymbol n(\boldsymbol y).
\end{equation}
In a similar fashion we can define the average operators as
\begin{equation}\label{eq:jumpAndave}
\prom{v} : = \frac{1}{2}\lim_{\epsilon\to 0}\left(v(\boldsymbol y - \epsilon\boldsymbol n)+v(\boldsymbol y + \epsilon\boldsymbol n)\right) \quad \text{and} \quad
\prom{\boldsymbol v} : = \frac{1}{2}\lim_{\epsilon\to 0}\left(\boldsymbol v(\boldsymbol y - \epsilon\boldsymbol n)+\boldsymbol v(\boldsymbol y + \epsilon\boldsymbol n)\right)\cdot\boldsymbol n(\boldsymbol y),
\end{equation}
and use them to define the following boundary integral operators
\[
\mathcal V \mu := \prom{\mathcal S \mu}\,, \qquad \mathcal K^\prime \mu := \prom{\nabla \left(\mathcal S \mu\right)}\,, \qquad
\mathcal K \eta := \prom{\mathcal D \eta}\,, \quad\text{and}\quad \mathcal W \eta := -\prom{\nabla \left(\mathcal D \eta\right)}.
\]
We are now in a position to recast the exterior problem \eqref{eq:exterior_problem} in terms of boundary integral equations. To that avail, we will represent $u^{\text{ext}}$ in $\Omega_{\text{ext}}$ as
\begin{equation}\label{eq:IntegralRepresentation}
u^{\text{ext}} = 
\mathcal D g  - \mathcal S\lambda + u_\infty 
\end{equation}
and extend it by zero for $\boldsymbol x\in \Omega$. The constant $u_\infty$ captures the far field behavior of the function and will have to be determined. Since $u^{\text{ext}}\equiv0$ in $\Omega$, by applying the integral operators above to the integral representation \eqref{eq:IntegralRepresentation},  the boundary condition \eqref{eq:ContinuityUext} leads to
\[
\prom{u^{\text{ext}}} = \tfrac{1}{2} g = \mathcal K g - \mathcal V\lambda + \tfrac{1}{2}u_\infty,
\]
giving rise to the integral equation
\begin{subequations}\label{eq:IntegralProblem}
\begin{equation}\label{integral-identity}
\left(\tfrac{1}{2} - \mathcal K\right)g = -\mathcal V\lambda + \tfrac{1}{2}u_\infty.
\end{equation}
To ensure that $u^{\text{ext}}=u_\infty$ as $|\boldsymbol x|\to\infty$, we must impose the additional restriction
\begin{equation}\label{eq:MeanConstraint}
\int_\Gamma\lambda = 0.
\end{equation}
\end{subequations}
Equation \eqref{integral-identity} will be used as part of the alternating scheme described in Section \ref{sec:Coupling}, where an approximation of $\lambda$ will be produced by a numerical solution of the interior problem \eqref{eq:interior-problem} and the density $g$ solving \eqref{integral-identity} will be then used as the Dirichlet datum for \eqref{eq:interior-problem}.

Therefore,  if $\Gamma$ has two continuous derivatives and $\lambda\in H^{-1/2}(\Gamma)$ is problem data satisfying the constraint \eqref{eq:MeanConstraint}, then the unique solvability of equation \eqref{eq:IntegralProblem} and continuous dependence on problem data follow from standard results in boundary integral equations (see, for instance \cite[Section 6.4]{Kress2014}). Moreover, there exists a constant $c>0$, depending only on $\Gamma$ and the norms of $(1/2-\mathcal K)^{-1}$ and $\mathcal V$, such that
\begin{align}\label{ineq:ContDependenceBIE}
\|g\|_{1/2,\Gamma} \leq c\|\lambda\|_{-1/2,\Gamma}.
\end{align}
Moreover, from this estimate and the representation formula \eqref{eq:IntegralRepresentation}, it follows that there exists $C_{\text{BIE}}>0$ such that
\begin{align}\label{ineq:ContDependenceBIE2}
\|u^{\text{ext}}\|_{\Omega} \leq C_{\text{BIE}}\|\lambda\|_{-1/2,\Gamma}+|u_\infty|.
\end{align}
\color{black}
\subsection{Variational formulation for the interior problem}\label{sec:VariationalFormulation}

In this Section, we will study the interior Dirichlet boundary value problem obtained from \eqref{eq:interior-problem} by removing \eqref{eq:ContinuityQN} altogether and considering that the boundary trace $g$, appearing in \eqref{eq:ContinuityU}, is known. This yields the problem
	\begin{subequations}\label{eq:interior-problem no coupled}
	\begin{align}
	\label{eq:interior-problem no coupled A}
	&& && \nabla \cdot \bq &= f & & \text{ in }  \Omega, && \\
	\label{eq:interior-problem no coupled B}
	&& && \boldsymbol{\kappa}^{-1}\, \bq + \nabla u &= 0 &  &\text{ in } \Omega ,&& \\
	\label{eq:interior-problem no coupled C}
	&& && u &= \xi_0 & & \text{ on } \partial \Omega.
	\end{align}
	\end{subequations}
Above, the source term $f\in L^2(\Omega)$ and the Dirichlet boundary data $\xi_0\in H^{1/2}(\partial\Omega)$ is given by
\[
\xi_0 = \left\{\begin{array}{cl}
            u_0 & \text{on } \Gamma_0, \\
            g   & \text{on } \Gamma.
\end{array}\right.
\]
To derive the weak formulation of this system, we  test \eqref{eq:interior-problem no coupled A} with an arbitrary $w\in L^2(\Omega)$ and \eqref{eq:interior-problem no coupled B} with $\bv \in \bH(\bdiv;\Omega)$, integrate by parts and incorporate \eqref{eq:interior-problem no coupled C} leading to
    \begin{align*}
    (\nabla \cdot \bq , w )_\Omega =\,&\phantom{-} (f,w)_\Omega \\
    (\boldsymbol{\kappa}^{-1}\, \bq, \bv)_\Omega - ( u, \nabla \cdot \bv)_\Omega =\,&- \langle \bv \cdot \bn , \xi_0 \rangle_{\partial \Omega},
    \end{align*}
{where $( \cdot, \cdot )_\Omega$ and $\langle \cdot, \cdot \rangle_{\partial \Omega}$ denote the $L^2$-inner products over $\Omega$ and $\partial \Omega$, respectively.}
From the three preceding equations, we arrive at the variational problem: 

Find $(\bq,u)\in \bH(\bdiv;\Omega) \times L^2(\Omega)$ such that
    \begin{subequations}\label{scheme-augmented-continuous}
    \begin{align}
     \widetilde{\mc{A}}(\bq, \bv) + \widetilde{\mc{B}}(\bv,u) &= \widetilde{\mc{F}}_1(\bv)  \qquad \forall \, \bv\in \bH(\bdiv;\Omega), \\
     \widetilde{\mc{B}}(\bq,w) &= \widetilde{\mc{F}}_2(w) \qquad \forall \, w\in L^2(\Omega),
    \end{align}
    \end{subequations}
where the bilinear forms $\widetilde{\mc{A}}: \bH(\bdiv;\Omega) \times \bH(\bdiv;\Omega) \to \md{R}$, \ $\widetilde{\mc{B}}: \bH(\bdiv;\Omega) \times L^2(\Omega) \to \md{R}$, and the functionals $\widetilde{\mc{F}_1 }: \bH(\bdiv;\Omega) \to \md{R}$ and $\widetilde{\mc{F}_2} : L^2(\Omega) \to \md{R}$ are defined by
    \begin{subequations}
    \begin{align*}
   \widetilde{\mc{A}}(\bq,\bv) &:=   (\kap^{-1} \bq, \bv)_{\Omega}, \\
    \widetilde{\mc{B}}(\bq, w) &:= -(w, \nabla \cdot \bq)_{\Omega}, \\
    \widetilde{\mc{F}}_1(\bv) &:=  - \langle \xi_0, \bv \cdot {\bn} \rangle_{\partial\Omega}, \\
    \widetilde{\mc{F}}_2( w) &:=  -(f, w )_{\Omega}.
    \end{align*}
    \end{subequations}
 
The well-posedness of \eqref{scheme-augmented-continuous} follows from standard arguments of Bab\v{u}ska-Brezzi theory \cite[Sec. 2.4]{Gatica:2014} and the solution satisfies
\begin{equation}\label{ineq:ContDependence}
\|\boldsymbol{q}\|_{{\rm div},\Omega} + \|u\|_{0,\Omega}\ \leq C_{\text{stab}}\overline{\boldsymbol \kappa}^{1/2}\left( \|f\|_{0,\Omega} + \|\xi_0\|_{1/2,\partial\Omega} \right) = C_{\text{stab}}\overline{\boldsymbol \kappa}^{1/2}\left( \|f\|_{0,\Omega} + \|g\|_{1/2,\Gamma} +    \|u_0\|_{1/2,\Gamma_0}\right).
\end{equation}

We will, however, not solve the problem as stated above and instead will consider a slightly different version posed in a subdomain. {This approach, known as the \textit{transfer path method} will be described in detail in Section \ref{sec:transfer}, and will require us first to discuss the geometric setting of the discretization, which we will do next.}
\section{HDG discretization of the interior problem}\label{sec:HDG}
\subsection{Geometric setting and notation}
\paragraph{The computational domain.}
We will consider, a family of polygonal subdomains $\Omega_h\subset\Omega$ that approximate $\Omega$ in the sense that the Lebesgue measure $\mu(\Omega\setminus\Omega_h)\to0$, as $h\to0$. We will refer to any such $\Omega_h$ as a \textit{computational domain} and will triangulate $\overline{\Omega}_h$ by a  shape-regular triangulation $\mc{T}_h$ as depicted in Figure \ref{fig:Geometry}. A generic element in $\mathcal T_h$ will be denoted by $T$ and the mesh parameter $h$ will be defined as diameter of a circle inscribing an element $T\in\mathcal T_h$. The set $\partial \mc{T}_h := \bigcup \{ \partial T: T\in \mc{T}_h\}$, will be referred to as \textit{the skeleton} of the triangulation. The set of edges, $e$, of $\mc{T}_h$ will be denoted by $\mc{E}_h$ and we will distinguish between those edges lying entirely in the computational boundary 
\[
\mathcal E^\partial_h :=\left\{e\in\mathcal E_h : e\cap\partial\Omega_h = e\right\},
\]
and those that are either interior or have at most their endpoints in the computational boundary
\[
\mathcal E^\circ_h :=\left\{e\in\mathcal E_h : e\cap\partial\Omega_h \neq e\right\}.
\]
We will refer to the former as \textit{boundary edges} and to the latter as \textit{interior edges}. Note that $\mc{E}_h = \mc{E}^\partial_h\cup \mc{E}^\circ_h$. 

Just as the boundary associated to the continuous problem \eqref{eq:interior-problem} has two separate connected components, the boundary of the computational domain can be split as $\partial\Omega_h = \Gamma_h \cup \Gamma_{h,0}$, where
\[
\Gamma_h :=\left\{e\in \mathcal T_h : d(e,\Gamma)\leq d(e,\Gamma_0)\right\} \quad \text{ and } \quad \Gamma_{h,0} :=\left\{e\in \mathcal T_h : d(e,\Gamma_0)< d(e,\Gamma)\right\}. 
\]

 We will require that the computational domain $\Omega_h$ and the triangulation $\mathcal T_h$ satisfy the following \textit{local proximity condition}: for any point in the computational boundary $\partial\Omega_h$, the minimum distance between $\boldsymbol x$ and the boundary $\partial\Omega=\Gamma\cup\Gamma_{0}$ should be, at most, of the same order of magnitude as the diameter of the smallest triangle $T\in\mathcal T_h$, such that $\boldsymbol x \in T$. In view of this condition, the process of mesh refinement should not be understood as a sequence of finer triangulations for a fixed computational domain $\Omega_h$. Instead, as the mesh diameter $h\to0$, the process involves the passage through a sequence of pairs domain/triangulation $(\Omega_h,\mathcal T_h)$ that satisfy the local proximity condition and exhaust the original domain $\Omega$ as the refinement progresses. We refer the reader to \cite{SaSaSo:2021a}, where this condition is discoursed in more detail, and to \cite{SaCeSo2020} where an algorithm for building a sequence $\{(\Omega_h,\mathcal T_h)\}_h$ is described.

\paragraph{Mesh-dependent subspaces and inner products.}
For the discrete formulation we will have introduce the following mesh-dependent inner products
	\begin{alignat*}{6}
	(u, w)_{\mc{T}_h} &:= \sum_{T\in \mc{T}_h} \int_T u \, w  \qquad &&\forall \, u, w \in L^2(\mc{T}_h),  \\
	(\bq, \bv)_{\mc{T}_h} &:= \sum_{T\in \mc{T}_h} \int_T \bq\cdot\bv  \qquad &&\forall \, \bq, \bv \in \bL^2(\mc{T}_h),  \\
	\langle u, w\rangle_{\partial \mc{T}_h} &:= \sum_{T\in \mc{T}_h} \int_{\partial T} u \, w  \qquad &&\forall \, u, w \in L^2(\partial \mc{T}_h),  \\	
	\langle u, w\rangle_{\partial \mc{T}_h \setminus \Gamma_h} &:= \sum_{T\in \mc{T}_h} \sum_{e\in \partial T \setminus \Gamma_h}\int_{e} u \, w  \qquad &&\forall \, u, w \in L^2(\partial \mc{T}_h), \\ 
	\langle u, w\rangle_{\partial \mc{T}_h \setminus \Gamma_{h,0}} &:= \sum_{T\in \mc{T}_h} \sum_{e\in \partial T \setminus \Gamma_{h,0}}\int_{e} u \, w  \qquad &&\forall \, u, w \in L^2(\partial \mc{T}_h).
	\end{alignat*}	
These inner products induce mesh-dependent norms that will be denoted, respectively, by
    \begin{equation*}
    \| w \|_{\compD} := (w,w)_{\mc{T}_h}^{1/2}, \qquad \| w \|_{\partial \mc{T}_h} := \langle w,w\rangle_{\partial \mc{T}_h}^{1/2} \quad  \text{ and } \quad   \| w \|_{\Gamma_h} := \langle w, w \rangle_{\partial \mc{T}_h \setminus \Gamma_h}^{1/2}.
    \end{equation*}
The finite dimensional discontinuous polynomial subspaces that will be used for discretization, for {$k\geq 0$}, are given by 
    \begin{subequations}
	\begin{align*}
	\bV_h &:= \{\bv\in \bL^2(\mc{T}_h) : \bv|_T \in [\md{P}_k(T)]^2, \ \forall \ T \in \mc{T}_h \}, \\
	W_h &:= \{w\in L^2(\mc{T}_h) : w|_T \in \md{P}_k(T), \ \forall \ T \in \mc{T}_h \}, \\
	M_h &:= \{\mu\in L^2(\mc{E}_h) : \mu|_T \in \md{P}_k(F), \ \forall \ F \in \mc{E}_h \},
	\end{align*}
	\end{subequations}
where, $\md{P}_k(T)$ denotes the space of polynomials of degree at most $k$ defined in $T\in \mc{T}_h$. Similarly, $\md{P}_k(e)$ denotes the space of polynomials of degree at most $k$ defined over a face $e\in \mc{E}_h$. 

\paragraph{Extension patches and extrapolation.}
Since the discrete spaces are defined only over the elements of the triangulation we will need to define a way to compute our approximations in the region $\Omega\setminus\Omega_h$ between the boundary and the computational boundary. To this purpose, we will tesselate this region as follows. Let:
\begin{itemize}
\item $\boldsymbol x_1$ and $\boldsymbol x_2$ be the endpoints of a boundary edge $e\in\partial\Omega_h$.
\item $\overline{\boldsymbol x}_1$ and $\overline{\boldsymbol x}_2$ be the corresponding points in $\partial\Omega$---as determined by the mapping \eqref{eq:BoundaryMapping}.
\item $\boldsymbol \sigma_1$ and $\boldsymbol \sigma_2$ the straight segments connecting $\overline{\boldsymbol x}_1$ to $\boldsymbol x_1$ and $\overline{\boldsymbol x}_2$ to $\boldsymbol x_2$. 
\end{itemize}

We will refer to the open region of $\Omega\setminus\Omega_h$ delimited by $e$, $\boldsymbol \sigma_1$ and $\boldsymbol \sigma_2$ and the segment of $\partial\Omega$ connecting $\overline{\boldsymbol x}_1$ to $\overline{\boldsymbol x}_2$ as an \textit{extension patch} and will denote it by $T_e^{\text{ext}}$. It is clear that for every $e\in\Gamma_h$ there is one and only one such $T_e^{\text{ext}}$ (this justifies subindex in the notation) and that $\overline{\Omega\setminus\Omega_h} = \cup_{e\in\Gamma_h}\overline{T}_e^{\text{ext}}$.

It also follows from this construction that for every $T_e^{\text{ext}}$ there is only one element $T_e$ in the triangulation such that $\overline{K}_e^{\text{ext}}\cap \overline{T}_e= e$. We will use this fact to define an extrapolation operator that will extend the value of the piecewise polynomial functions defined on $T_e$ onto the corresponding extension patch $T_e^{\text{ext}}$, thus extending functions the discrete spaces above into the full domain $\Omega$. With this in mind, we will define the values of polynomial function $p$ on $T_e^{\text{ext}}$ by extrapolating the values of the corresponding polynomial from $T_e$, and will denote its as $Ep(\boldsymbol x)$ for any $\boldsymbol x \in T_e^{\text{ext}}$. 

For a given domain $\Omega_h$ and corresponding triangulation  $\mathcal T_h$, the usual notion of the exterior normal vector is well defined for almost all points in the boundary, with the possible exception of the vertices of the triangulation. We will  define the exterior normal vector to the computational domain, $\boldsymbol n_h$ in the usual manner, and extend the definition to $\boldsymbol n_h(\boldsymbol x)=\boldsymbol\sigma(\boldsymbol x)$  for those vertices for which the standard normal vector is not well defined. {On the other hand, we will define the unit normal vector exterior \textit{to each element $T\in\mathcal T_h$} as $\normalh$, which will coincide with the exterior normal $\boldsymbol n_h$ on element edges belonging to the computational boundary $\Gamma_h$.}

Finally,  for every edge $e\in\mathcal E_h^\partial$ we will denote the ratio between its distance to the boundary and the diameter, $h_{T_e}$, of its parent element  as $r_e : = d(e,\partial\Omega)/h_{T_e}$, and will define the \textit{boundary proximity parameter} as
\begin{equation*}
R_h:= \max_{e\in\mathcal E_h^\partial}\, r_e,
\end{equation*}
and will assume for this work that the family of admissible domains and triangulations $(\Omega_h,\mathcal T_h)$ is such that: 1) $R_h\to0$ as $h\to0$, and 2) $\|\boldsymbol n_h-\boldsymbol n\|_\infty= o(h^{1/2})$ as $h\to0$, where the normal $\boldsymbol n_h$ should be understood as coinciding with $\boldsymbol\sigma$ for those points in which the standard normal vector is not defined.

\subsection{Transferal of boundary conditions}\label{sec:transfer}

Having introduced all the necessary geometric concepts we can now return to the interior problem \eqref{eq:interior-problem no coupled} which we will now pose in a polygonal computational domain $\Omega_h\subset\Omega$ satisfying the admissibility requirements discussed in the previous section. In addition, we will need to define a bijective\footnote{As numerous numerical experiments have shown \cite{CoSo2012,CoSo2014,SaSo2019,SaCeSo2020}, the algorithm is robust with respect to the particular choice for this mapping, so long as distance between $\boldsymbol x$ and its corresponding $\overline{\boldsymbol x}$ remains comparable to the local mesh diameter. In this article we will limit ourselves to consider solely those computational domains $\Omega_h$ for which such a mapping exists.} mapping
\begin{align}\label{eq:BoundaryMapping}
\phi: \partial\Omega_h \;& \longrightarrow  \partial\Omega\\
\nonumber
\boldsymbol x \;&\longmapsto \overline{\boldsymbol x}
\end{align}
assigning a point $\overline{ \boldsymbol x} \in \partial\Omega$ to every point $\boldsymbol x \in \partial\Omega_h$. 

For any fixed computational domain $\Omega_h$, the solution pair to \eqref{scheme-augmented-continuous} satisfies the related problem
	\begin{subequations}\label{eq:interior-problem no coupled in Dh}
	\begin{align}
	&& && \nabla \cdot \bq &= f \label{eq:interior1}& & \text{ in }  \Omega_h, && \\
	&& && \boldsymbol{\kappa}^{-1}\, \bq + \nabla u &= 0 &  &\text{ in } \Omega_h ,\label{eq:interior2}&& \\
	&& && u &=  \varphi^{\bq}_0 & & \text{ on }   \partial \Omega_h\label{eq:interior3},
	\end{align}
	\end{subequations}
where the boundary condition {$\varphi^{\bq}_0$} can be calculated by integrating equation \eqref{eq:interior-problem no coupled B} along a path connecting $\partial \Omega$ to $\partial \Omega_h$. More precisely, if we denote the distance between $\boldsymbol x$ and $\overline{\boldsymbol x}$ by $l(\boldsymbol x)$, and by $\boldsymbol t$ the unit vector $(\overline{\boldsymbol x}-\boldsymbol x)/|\overline{\boldsymbol x}-\boldsymbol x|$, the boundary conditions on $\Gamma_h$  can be expressed in terms of the flux $\boldsymbol q$ and the trace of $u$ on $\partial \Omega$, as
    \begin{equation}\label{def:xi_o-tilde}
     \varphi^{\bq}_0(\bx) := \xi_0\circ\phi(\bx) + \int_0^{l(\bx)} \boldsymbol{\kappa}^{-1} \bq (\bx + \bt(\bx) s) \cdot \bt(\bx) ds \qquad \forall \, \bx\in \partial \Omega_h.
    \end{equation}
Note that the required bijectivity of {$\phi(\boldsymbol x)$} implies that $\boldsymbol t$ can not be tangent { to a boundary edge}. Thus, the solution of \eqref{scheme-augmented-continuous} also satisfies the abstract formulation
    \begin{subequations}
    \begin{align*}
    \mc{A}(\bq, \bv) + \mc{A}_{T}(\bq, \bv) +  \mc{B}(\bv,u) &= \mc{F}_{1}(\bv)  \qquad \forall \, \bv\in \bH(\bdiv;\Omega_h), \\
     \mc{B}(\bq,w) &= \mc{F}_{2}(w) \qquad \forall \, w\in L^2(\Omega_h),
    \end{align*}
    \end{subequations}
where the bilinear forms $\mc{A}: \bH(\bdiv;\Omega_h) \times \bH(\bdiv;\Omega_h) \to \md{R}$, \ $\mc{B}: \bH(\bdiv;\Omega_h) \times L^2(\Omega_h) \to \md{R}$, and the functionals $\mc{F}_1 : \bH(\bdiv;\Omega_h) \to \md{R}$ and $\mc{F}_2 : L^2(\Omega_h) \to \md{R}$ are defined by
    \begin{subequations}
    \begin{align*}
    \mc{A}(\bq,\bv) &:= (\kap^{-1} \bq, \bv)_{\Omega_h}, \\
    \mc{A}_{T}(\bq,\bv) &:= \sum_{e\subset \partial \Omega_h } \int_e \left( \int_0^{l(\bx)} \kap^{-1} \bq(\bx + \boldsymbol{t}(\bx) s) \cdot \boldsymbol{t}(\bx) \right) \bv(\bx) \cdot \normalh \,ds \,dS_{\bx},  \\
    \mc{B}(\bq, w) &:= -(w, \nabla \cdot \bq)_{\Omega_h}, \\
    \mc{F}_{1}(\bv) &:=  - \langle\xi_0\circ\phi, \bv \cdot \normalh\rangle_{\partial \Omega_h}, \\
    \mc{F}_{2}( w) &:=  -(f, w )_{\Omega_h} .
    \end{align*}
    \end{subequations}
Beyond the difference in the domain of definition, the system above differs from the original problem \eqref{scheme-augmented-continuous} in the presence of the term $\mathcal A_T$, introduced by the transfer of boundary condition. The well posedness of problems of this form was established in \cite{OySoZu2019}. On the interest of brevity, we shall not repeat the argument here and instead will now discuss the discretization of this problem along with that of the integral equation \eqref{eq:IntegralProblem}.

\color{black}
%
\subsection{Discrete variational formulation}\label{sec:AugmentedHDG}
Having defined all the required notation, we can now state the HDG discretization of \eqref{eq:interior-problem no coupled} which, for Dirichlet data $\xi_0 \in H^{1/2}(\partial \Omega)$,  seeks an approximation $(\bq_h, u_h, \hat{u}_h)\in \bV_h \times W_h\times M_h$ satisfying  
	\begin{subequations}\label{HDG-method}
	\begin{align}
	(\boldsymbol{\kappa}^{-1} \bq_h, \bv)_{\mc{T}_h} - (u_h, \nabla \cdot \bv)_{\mc{T}_h} + \langle \hat{u}_h, \bv \cdot \normalh \rangle_{\partial \mc{T}_h}  &=  0, \label{eq:HDG-method_1} \\
	(\nabla\cdot \bq_h, w)_{\mc{T}_h} + \langle \tau\, u_h, w \rangle_{\partial \mc{T}_h} - \langle \tau \, \hat{u}_h, w \rangle_{\partial \mc{T}_h} &= (f, w)_{\mc{T}_h}, \label{eq:HDG-method_2} \\
	\langle \mu, \hat{\bq}_h \cdot \normalh \rangle_{\partial \mc{T}_h \setminus \partial \Omega_h} &= 0, \label{eq:HDG-method_3} \\
	\langle \hat{u}_h, \mu \rangle_{\partial \Omega_h} &= \langle \varphi^{\bq_h}_0, \mu \rangle_{\partial \Omega_h}, \label{eq:HDG-method_4}
	\end{align}
\noindent for any test $(\boldsymbol v, w, {\mu})\in \bV_h \times W_h\times M_h$. Following \cite{CoSo2012}, the approximate boundary data on $\partial \Omega_h$ appearing on the right hand side of \eqref{eq:HDG-method_4} is given by

    \begin{alignat}{6}
	 \varphi^{\bq_h}_0(\bx) &:= \xi_0\circ\phi(\bx) + \int_0^{l(\bx)} \boldsymbol{\kappa}^{-1} \, E\, \bq_h(\bx + \boldsymbol t(\bx) s) \cdot \boldsymbol t(\bx) \,ds \qquad&& \text{ for }\; \bx\in \partial \Omega_h. \label{def:varphi_0}
	\end{alignat}
	\end{subequations}
Where $E$ denotes the extrapolation operator. 
The numerical flux in the normal direction $\widehat{\boldsymbol q}_h\cdot \normalh$ is defined as 
    \begin{equation}\label{numerical-trace}
	\hat{\bq}_h \cdot \normalh = \bq_h \cdot\normalh + \tau\, (u_h - \hat{u}_h) \qquad \text{ on } \partial \mathcal{T}_h, 
	\end{equation}
{where $\tau$ stabilization function. Throughout this analysis we will only require $0<\tau\leq\overline{\tau}<\infty$, where $\overline{\tau}$ denotes the maximum value of $\tau$.} 

Note that, the terms $\langle \hat{u}_h, \bv \cdot\normalh \rangle_{\partial \mc{T}_h}$ and $\langle \tau \hat{u}_h, w \rangle_{\partial \mc{T}_h}$, given in \eqref{eq:HDG-method_1} and \eqref{eq:HDG-method_2}, respectively, can be split into the contributions of the interior edges and of the boundary edges as 
    \begin{subequations}
    \begin{align*}
    \langle \hat{u}_h, \bv \cdot \normalh \rangle_{\partial \mc{T}_h} &= \langle \hat{u}_h, \bv \cdot \normalh \rangle_{\partial \mc{T}_h  \setminus \partial\Omega_h}  + \langle \varphi^{\bq_h}_0, \bv \cdot \normalh \rangle_{\partial \Omega_h}, \\
    \langle \tau \hat{u}_h, w \rangle_{\partial \mc{T}_h} &= \langle \tau \, \hat{u}_h, w \rangle_{\partial \mc{T}_h \setminus \partial\Omega_h} +  \langle \tau \varphi^{\bq_h}_0, w \rangle_{\partial \Omega_h}.
    \end{align*}
    \end{subequations}
Replacing now the numerical flux \eqref{numerical-trace} in \eqref{eq:HDG-method_3}, results in
    \begin{equation*}
    \langle \mu, \bq_h \cdot \normalh \rangle_{\partial \mc{T}_h \setminus \partial\Omega_h} + \langle \mu, \tau (u_h - \hat{u}_h) \rangle_{\partial \mc{T}_h \setminus \partial\Omega_h}= 0.
    \end{equation*}

In order to apply known results from functional analysis, we rewrite the numerical trace $\hat{u}_h$ in terms of averages and jumps. For this, we use the equation \eqref{eq:HDG-method_3} and  separate the term featuring $\hat{u}_h$ as
    \begin{align*}
    0 &= \langle \mu, \bq_h \cdot \normalh \rangle_{\partial \mc{T}_h \setminus \partial\Omega_h} + \langle \mu, \tau u_h  \rangle_{\partial \mc{T}_h \setminus \partial\Omega_h} - \langle \mu, \tau \hat{u}_h \rangle_{\partial \mc{T}_h \setminus \partial\Omega_h} \\
    &= \sum_{T\in \mc{T}_h} \sum_{e\in \partial T \setminus \partial\Omega_h} \int_e \left(\mu\,  \bq_h\cdot \normalh + \tau\, \mu \, u_h - \tau\, \mu \, \hat{u}_h\right) \\
    &=  \sum_{e\in \mc{E}_h^{\circ}} \int_{e} \left( \jump{\bq_h} \,  \mu + 2\tau\, \prom{u_h} \, \mu - 2\tau\, \hat{u}_h \, \mu \right) 
    = \int_{\mc{E}_h^{\circ}} \left( \jump{\bq_h} + 2 \, \tau\, \prom{u_h} - 2\, \tau\, \hat{u}_h \right) \mu  \qquad \forall \, \mu \in M_h .
    \end{align*}
Above, we have used the fact that the hybrid variable $\hat{u}_h$ is single valued, and the average $\prom{\cdot}$ and jump $\jump{\cdot}$ operators are defined for every edge $e$ in a fashion analogous to \eqref{eq:JumpOperator} and \eqref{eq:jumpAndave}. Then, taking as test function $\mu =\jump{\bq_h} + 2 \, \tau\, \prom{u_h} - 2\, \tau\, \hat{u}_h \in M_h $ in the expression above, we deduce that
    \begin{equation*}
    \hat{u}_h = \frac{1}{2}\tau^{-1} \jump{\bq_h}  + \prom{u_h} \qquad \text{ on } \mc{E}_h^{\circ}.
    \end{equation*}

\noindent We make use of this identity to obtain
\begin{eqnarray*}
    \langle \hat{u}_h, \bv \cdot \normalh \rangle_{\partial \mc{T}_h  \setminus \partial\Omega_h} 
    = \langle  \hat{u}_h,\jump{\bv}  \rangle_{\mc{E}_h^{\circ}}
    = \frac{1}{2} \langle \tau^{-1} \jump{\bq_h}, \jump{\bv}\rangle_{\mc{E}_h^{\circ}} +
    \langle\prom{u_h}, \jump{\bv}\rangle_{\mc{E}_h^{\circ}}
 \end{eqnarray*}
 and
 \begin{align*}
    \langle \tau \, \hat{u}_h, w \rangle_{\partial \mc{T}_h \setminus \partial\Omega_h} 
    =
    2 \langle \tau \prom{w}, \hat{u}_h\rangle_{\mc{E}_h^{\circ}}
    = \langle\jump{\bq_h}, \prom{w}\rangle_{\mc{E}_h^{\circ}} +2 \langle\tau \prom{w}, \prom{u_h} \rangle_{\mc{E}_h^{\circ}}.
\end{align*}
In this way, replacing the definition of $\varphi_0^{\bq_h}$---see \eqref{def:varphi_0}---in \eqref{eq:HDG-method_1} and \eqref{eq:HDG-method_2}, together with the foregoing identities, we obtain that \eqref{HDG-method} is equivalent to finding  $(\bq_h,u_h)\in \bV_h \times W_h$ such that
    \begin{subequations}\label{scheme-augmented}
    \begin{align}
    \mc{A}_h(\bq_h, \bv) + \mc{A}_T(\bq_h,\bv) + \mc{B}_h(\bv,u_h) &= \mc{F}_{1,h}(\bv)  \qquad \forall \, \bv\in \bV_h, \label{Hdg discrete-1} \\
    \mc{B}_T(\bq_h,w) + \mc{B}_h(\bq_h,w) - \mc{C}_h(u_h,w ) &= \mc{F}_{2,h}(w) \qquad \forall \, w\in W_h, \label{Hdg discrete-2}
    \end{align}
    \end{subequations}
\color{black}
where the bilinear forms $\mc{A}_h: \bV_h \times \bV_h \to \md{R}$, $\mc{B}_h, \mc{B}_T: \bV_h \times W_h \to \md{R}$ , $\mc{C}_h : W_h \times W_h \to \md{R}$, and the functionals $\mc{F}_{1,h} : \bV_h \to \md{R}$ and $\mc{F}_{2,h}  : W_h \to \md{R}$ are defined by
    \begin{subequations}\label{def:operators}
    \begin{align}
    \label{eq:OperatorA} 
    \mc{A}_h(\bq_h,\bv) &:= (\kap^{-1} \bq_h, \bv)_{\mc{T}_h} + \frac{1}{2} \langle \tau^{-1} \jump{\bq_h}, \jump{\bv}\rangle_{\mc{E}_h^{\circ}},  \\
    \mc{A}_{T}(\bq,\bv) &:=\sum_{e\subset \partial \Omega_h } \int_e \left( \int_0^{l(\bx)} \kap^{-1} \bq(\bx + \boldsymbol{t}(\bx) s) \cdot \boldsymbol{t}(\bx) \right) \bv(\bx) \cdot \normalh \,ds \,dS_{\bx},  \\
    \mc{B}_h(\bq_h, w) &:= -(w, \nabla \cdot \bq_h)_{\mc{T}_h} + \langle \jump{\bq_h}, \prom{w} \rangle_{\mc{E}_h^{\circ}}\\
    \mc{B}_T(\bq_h, w) &:= \sum_{e\subset \Omega_h} \int_e \tau \left( \int_0^{l(\bx)} \kap^{-1} \bq_h(\bx + \boldsymbol{t}(\bx) s) \cdot \boldsymbol{t}(\bx)  \right) w(\bx)  \,ds \,dS_{\bx}, \\
    \label{eq:BilinearFormC}
    \mc{C}_h(u_h,w) &:= \langle \tau\, u_h, w \rangle_{\partial \mc{T}_h} -  2 \langle \tau\prom{u_h},\prom{w}\rangle_{\mc{E}_h^{\circ}}, \\
    \mc{F}_{1,h}(\bv) &:=  - \langle \xi_0\circ\phi, \bv \cdot \normalh \rangle_{\partial\Omega_{h}}, \\
    \mc{F}_{2,h}( w) &:= - (f, w )_{\mc{T}_h} - \langle \tau \,  \xi_0\circ\phi, w \rangle_{\partial\Omega_{h}}.
    \end{align}
    \end{subequations}

The unique solvablity of the scheme \eqref{scheme-augmented} will be proved by an energy argument. To that end, for $e\in \partial \Omega_h$ and $\bv \in \bL^2(T_e^{\text{ext}})$, it is convenient to define the following norm on the extension patch $T_e^{\text{ext}}$:
\begin{align*}
    	\vertiii{\bv}_e:=\,&\left(\int_{e}\int_{0}^{l(\boldsymbol{x})}|\bv(\boldsymbol{x}+s\boldsymbol{t}(\boldsymbol{x}))|^{2}\,ds\,dS_{\boldsymbol{x}}\right)^{1/2}.
\end{align*}

This norm is equivalent to the standard $\bL^2(T_e^{\text{ext}})$-norm as shown first in \cite{OySoZu2019} for the two dimensional and later extended to three dimensions in \cite{OySoZu2020}. That is, there exist positive constants $C_1^e$ and $C_2^e$, independent of $h$, such that,
\begin{equation}\label{norm-equivalence}
C_1^e 	\vertiii{\bv}_e \leq \|\bv\|_{T_e^{\text{ext}}} \leq C_2^e 	\vertiii{\bv}_e.
\end{equation}

This equivalence holds true under certain conditions on the transferring vectors $\bt(\bx)$ (cf. \cite{OySoZu2020,OySoZu2019})) ensuring, roughly speaking, that they cannot deviate too much from the vector normal to $e$.

We also introduce the element-wise constants
    \begin{equation}\label{eq:CextCinv}
	C^e_{ext} := \dfrac{1}{\sqrt{r_e}} \sup_{\boldsymbol{\chi}\in \mathcal V^k} \dfrac{\vertiii{\boldsymbol{\chi}}_e }{\|\boldsymbol{\chi}\|_{T_e}} \quad \text{ and } \quad
	C^e_{inv} := h_e^{\perp} \sup_{\boldsymbol{\chi}\in \mathcal V^k} \dfrac{\vertiii{\nabla \boldsymbol{\chi}} \|_{T_e} }{\|\boldsymbol{\chi} \|_{T_e}}, \end{equation}
where 
$ \mathcal V^k := \left\{ \boldsymbol p \in \mathds [\mathds{P}_k(T_e^{ext} \cup T_e)]^2\, : \,  \boldsymbol p \neq\boldsymbol 0 \right\}
$. These constants are independent of $h$, but depend on the polynomial degree $k$ and the mesh regularity parameter as shown in \cite{CoQiuSo2014}.

We now proceed to derive an energy inequality that will lead to the well-posedness of \eqref{scheme-augmented}.

\begin{lem}\label{lemma:energy}
Let $\alpha_h=R_h\underline{\kap}^{-1}(\overline{\kap}
-\overline{\kap}^{1/2}h^{1/2} \overline{\tau}^{1/2})$ and $\beta_h=\underline{\kap}^{-1}\overline{\kap}^{1/2}R_h h^{1/2} \overline{\tau}^{1/2}$. It holds
\begin{align}\label{ineq:energy}
(1-\alpha_h)\|\kap^{-1/2}\bq_h\|_{0,\Omega_h}^2 &+(1-\beta_h) \|\tau^{1/2}u_h\|_{\partial \Omega_h}^2
+\|\tau^{1/2} (u_h - \prom{u_h})\|_{\partial \mc{T}_h\setminus \partial \Omega_h}^2
+ \|\tau^{-1/2} \jump{\bq_h}\|_{\mc{E}_h^\circ}^2\nonumber\\
&\lesssim \|\kap^{1/2} h^{-1/2}\xi_0\circ\phi\|_{\partial\Omega_{h}}^2
 + \|f\|_{0,\Omega} \|u_h\|_{0,\Omega_h}.
\end{align}

\end{lem}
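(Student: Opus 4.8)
The plan is to run the standard HDG energy argument, adapted to the transfer terms $\mc{A}_T$ and $\mc{B}_T$. First I would take the discrete system \eqref{scheme-augmented}, choose the test functions $\bv=\bq_h$ in \eqref{Hdg discrete-1} and $w=u_h$ in \eqref{Hdg discrete-2}, and subtract the second identity from the first. Since $\mc{B}_h(\bq_h,u_h)$ appears in both, it cancels, leaving the exact energy identity
\begin{equation*}
\mc{A}_h(\bq_h,\bq_h)+\mc{C}_h(u_h,u_h)=\mc{F}_{1,h}(\bq_h)-\mc{F}_{2,h}(u_h)-\mc{A}_T(\bq_h,\bq_h)+\mc{B}_T(\bq_h,u_h).
\end{equation*}

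Next I would identify the coercive contributions on the left. From \eqref{eq:OperatorA}, $\mc{A}_h(\bq_h,\bq_h)=\|\kap^{-1/2}\bq_h\|_{0,\Omega_h}^2+\tfrac12\|\tau^{-1/2}\jump{\bq_h}\|_{\mc{E}_h^\circ}^2$. For the term coming from \eqref{eq:BilinearFormC}, I would split $\langle\tau u_h,u_h\rangle_{\partial\mc{T}_h}$ into its boundary-edge part $\|\tau^{1/2}u_h\|_{\partial\Omega_h}^2$ and its interior-edge part, and on each interior edge use the elementary identity $(u_h^+)^2+(u_h^-)^2-2\prom{u_h}^2=\tfrac12(u_h^+-u_h^-)^2$ together with $u_h^\pm-\prom{u_h}=\pm\tfrac12(u_h^+-u_h^-)$; this recovers exactly $\mc{C}_h(u_h,u_h)=\|\tau^{1/2}u_h\|_{\partial\Omega_h}^2+\|\tau^{1/2}(u_h-\prom{u_h})\|_{\partial\mc{T}_h\setminus\partial\Omega_h}^2$. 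Thus the four target quantities on the left of \eqref{ineq:energy} (up to the harmless factor $\tfrac12$ on the jump term, absorbed by $\lesssim$) are produced exactly.

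The heart of the argument---and the step I expect to be the main obstacle---is bounding the two transfer forms so that the resulting negative contributions assemble into the precise coefficients $1-\alpha_h$ and $1-\beta_h$. For $\mc{A}_T(\bq_h,\bq_h)$ I would apply Cauchy--Schwarz along each transfer path, pull out the factor $\underline{\kap}^{-1}$ and use $l(\bx)\le R_h\,h_{T_e}$ to produce $R_h h$, then invoke the extension-patch norm equivalence \eqref{norm-equivalence} and the constants $C^e_{ext}$ of \eqref{eq:CextCinv} to replace $\vertiii{\bq_h}_e$ by $\sqrt{r_e}\,\|\bq_h\|_{T_e}$, plus a discrete trace/inverse inequality for the edge factor $\bq_h\cdot\normalh$; after using $\|\bq_h\|_{T_e}\le\overline{\kap}^{1/2}\|\kap^{-1/2}\bq_h\|_{T_e}$ and summing over $e$ this yields a bound of the form $R_h\underline{\kap}^{-1}\overline{\kap}\,\|\kap^{-1/2}\bq_h\|_{0,\Omega_h}^2$. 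The form $\mc{B}_T(\bq_h,u_h)$ carries the weight $\tau$, so the same chain of estimates produces the extra factor $\overline{\tau}^{1/2}h^{1/2}$, and a weighted Young inequality splits it between $\|\kap^{-1/2}\bq_h\|_{0,\Omega_h}^2$ and $\|\tau^{1/2}u_h\|_{\partial\Omega_h}^2$. The delicate bookkeeping here---keeping track of the exact powers of $R_h$, $h$, $\overline{\tau}$, $\underline{\kap}$, $\overline{\kap}$ rather than hiding them in generic constants---is precisely what makes the coefficients emerge as $\alpha_h=R_h\underline{\kap}^{-1}(\overline{\kap}-\overline{\kap}^{1/2}h^{1/2}\overline{\tau}^{1/2})$ and $\beta_h=\underline{\kap}^{-1}\overline{\kap}^{1/2}R_h h^{1/2}\overline{\tau}^{1/2}$, whose sum is the clean quantity $R_h\underline{\kap}^{-1}\overline{\kap}$; moving these bounds to the left then gives the factors $1-\alpha_h$ and $1-\beta_h$.

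Finally I would bound the right-hand side. The volume term is kept as a product, $(f,u_h)_{\mc{T}_h}\le\|f\|_{0,\Omega_h}\|u_h\|_{0,\Omega_h}\le\|f\|_{0,\Omega}\|u_h\|_{0,\Omega_h}$, which is exactly the second term of \eqref{ineq:energy} (no Young's inequality on $\|u_h\|_{0,\Omega_h}$, since the energy does not control it). For the two boundary functionals from $\mc{F}_{1,h}$ and $\mc{F}_{2,h}$ I would use Cauchy--Schwarz, a discrete trace inequality $\|\bq_h\cdot\normalh\|_{\partial\Omega_h}\lesssim h^{-1/2}\|\bq_h\|_{0,\Omega_h}$, and Young's inequality, so that small multiples of $\|\kap^{-1/2}\bq_h\|_{0,\Omega_h}^2$ and $\|\tau^{1/2}u_h\|_{\partial\Omega_h}^2$ are absorbed on the left while the remaining data terms combine into $\|\kap^{1/2}h^{-1/2}\xi_0\circ\phi\|_{\partial\Omega_h}^2$. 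Collecting all contributions yields \eqref{ineq:energy}.
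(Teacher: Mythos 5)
Your proposal is correct and follows essentially the same route as the paper's proof: test with $(\bq_h,u_h)$, use the semi-definiteness identity for $\mc{C}_h$, bound $\mc{A}_T$ and $\mc{B}_T$ via Cauchy--Schwarz along the transfer paths together with $l(\bx)\lesssim r_e h_{T_e}$, the norm equivalence \eqref{norm-equivalence}, the constants \eqref{eq:CextCinv} and a discrete trace inequality, then absorb the data terms with Young's inequality while keeping $(f,u_h)_{\mc{T}_h}$ as a product. The only differences are cosmetic (you move the transfer forms to the right-hand side before estimating, and you spell out the edge-wise algebra behind \eqref{Ch-semidefinite}), so no further comment is needed.
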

\begin{proof}
 By taking $\bv=\bq_h$ and $w=u_h$ in \eqref{scheme-augmented}, and subtracting the resulting expressions we obtain
\begin{align}
\|\kap^{-1/2}\bq_h\|_{0,\Omega_h}^2 &+ \frac{1}{2}\|\tau^{-1/2} \jump{\bq_h}\|_{\mc{E}_h^\circ}^2 +\mc{A}_T(\bq_h,\bq_h)
+\mc{B}_T(\bq_h, u_h) + \mc{C}_h(u_h,u_h)\nonumber\\
&= - \langle \xi_0\circ\phi, \bq_h \cdot \normalh \rangle_{\partial\Omega_{h}}
 - (f, u_h )_{\mc{T}_h} - \langle \tau \,  \xi_0\circ\phi, u_h \rangle_{\partial\Omega_{h}}\label{eq:aux-energy}.
\end{align}
First of all, after performing algebraic calculations, we observe that $\mc{C}$ is a semi-definite operator from $W_h\times W_h$ to $\mathbb{R}$. In fact, 
\begin{align}\label{Ch-semidefinite}
\mc{C}_h(u_h,u_h)=\langle \tau\, u_h, u_h \rangle_{\partial \mc{T}_h} - 2 \langle \tau\prom{u_h},\prom{u_h}\rangle_{\mc{E}_h^{\circ}}
= \|\tau^{1/2} (u_h - \prom{u_h})\|_{\partial \mc{T}_h\setminus \partial \Omega_h}^2
+ \|\tau^{1/2} u_h\|_{ \partial \Omega_h}^2.
\end{align}
We will now obtain a lower bound for the non-positive terms of left hand side of \eqref{eq:aux-energy}. In this direction, the operator $\mc{A}_T$ can be bounded as follows. Let $e\in \subset \partial \Omega_h$ and $\bx\in e$. By the Cauchy-Schwarz inequality and the definition in \eqref{eq:CextCinv},
\begin{align*}
 \int_0^{l(\bx)} \kap^{-1} \bq_h(\bx + \boldsymbol{t}(\bx) s) \cdot \boldsymbol{t}(\bx) ds
 \leq l(\bx)^{1/2} \vertiii{\kap^{-1} \bq_h}_e
\leq  h_{T_e}^{1/2} r_e \underline{\kap}^{-1}\overline{\kap}^{1/2}C_{ext}^e \|\kap^{-1/2}\bq_h\|_{T_e},
\end{align*}
where we have used the bound $l(\bx)\leq h_{T^e} r_e$. Then, by the discrete trace inequality, we have
\begin{align}\label{boundAT}
-\mc{A}_T(\bq_h,\bq_h) \leq& |\mc{A}_T(\bq_h,\bq_h)|
\lesssim \underline{\kap}^{-1}\overline{\kap}^{1/2}\sum_{e\subset \partial \Omega_h} h^{1/2}_{T_e} r_h  \|\kap^{-1/2}\bq_h\|_{e}\|\bq_h\cdot\normalh\|_{e}\nonumber\\
\lesssim& R_h \underline{\kap}^{-1}\overline{\kap}\|\kap^{-1/2}\bq_h\|_{\Omega_h}^2.
\end{align}
The same arguments yield to 
\begin{align}\label{boundBT}
-\mc{B}_T(\bq_h, u_h) \leq& |\mc{B}_T(\bq_h, u_h)| \lesssim \underline{\kap}^{-1}\overline{\kap}^{1/2} R_h h^{1/2} \overline{\tau}^{1/2}
\|\kap^{-1/2}\bq_h\|_{0,\Omega_h} \|\tau^{1/2}u_h\|_{0,\partial\Omega_h}\nonumber\\
 \leq& \underline{\kap}^{-1}\overline{\kap}^{1/2}R_h h^{1/2} \overline{\tau}^{1/2}
\left(\frac{1}{2}\|\kap^{-1/2}\bq_h\|_{0,\Omega_h}^2 +\frac{1}{2}\|\tau^{1/2}u_h\|_{0,\partial\Omega_h}^2\right).
\end{align}
Therefore, combining the above estimates and \eqref{eq:aux-energy}, we deduce that
\begin{align*}
(1-R_h\underline{\kap}^{-1}\overline{\kap}
&-\underline{\kap}^{-1}\overline{\kap}^{1/2}R_h h^{1/2} \overline{\tau}^{1/2})\|\kap^{-1/2}\bq_h\|_{0,\Omega_h}^2 +(1-\underline{\kap}^{-1}\overline{\kap}^{1/2}R_h h^{1/2} \overline{\tau}^{1/2}) \|\tau^{1/2}u_h\|_{\partial \Omega_h}^2\\
&+\|\tau^{-1/2} \jump{\bq_h}\|_{\mc{E}_h^\circ}^2+\|\tau^{1/2} (u_h - \prom{u_h})\|_{\partial \mc{T}_h\setminus \partial \Omega_h}^2\\
&\lesssim | \langle \xi_0\circ\phi, \bq_h \cdot \normalh \rangle_{\partial\Omega_{h}}|
+|  (f, u_h )_{\mc{T}_h}| +| \langle \tau \,  \xi_0\circ\phi, u_h \rangle_{\partial\Omega_{h}}|.
\end{align*}

\color{black}
Finally, the result follows by the discrete trace inequality applied to the boundary terms on the right hand side, Young's inequality and the definition of $\alpha_h$ and $\beta_h$.

\end{proof}
\begin{crl}
The HDG scheme \eqref{scheme-augmented} is well-posed for $h$ sufficiently small.
\end{crl}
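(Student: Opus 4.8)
The plan is to exploit that \eqref{scheme-augmented} is a square linear system over the finite-dimensional space $\bV_h\times W_h$, so that proving well-posedness reduces to verifying injectivity: it suffices to show that the only solution associated with homogeneous data $f=0$ and $\xi_0\circ\phi=0$ is the trivial one, whence surjectivity and unique solvability for arbitrary data follow automatically. First I would record that, under the standing assumption $R_h\to0$ as $h\to0$, the two constants of Lemma~\ref{lemma:energy} satisfy $\alpha_h=R_h\underline{\kap}^{-1}(\overline{\kap}-\overline{\kap}^{1/2}h^{1/2}\overline{\tau}^{1/2})\to0$ and $\beta_h=\underline{\kap}^{-1}\overline{\kap}^{1/2}R_h h^{1/2}\overline{\tau}^{1/2}\to0$; hence there exists $h_0>0$ such that $\alpha_h<1$ and $\beta_h<1$ for every $h<h_0$, making the coefficients $(1-\alpha_h)$ and $(1-\beta_h)$ in \eqref{ineq:energy} strictly positive.

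With homogeneous data the entire right-hand side of \eqref{ineq:energy} vanishes, and since every term on the left is nonnegative and carries a positive coefficient, each must vanish separately. From $\|\kap^{-1/2}\bq_h\|_{0,\Omega_h}^2=0$ I conclude $\bq_h=0$; from $\|\tau^{1/2}u_h\|_{\partial\Omega_h}^2=0$ that $u_h=0$ on $\partial\Omega_h$; and from $\|\tau^{1/2}(u_h-\prom{u_h})\|_{\partial\mc{T}_h\setminus\partial\Omega_h}^2=0$ that $u_h$ coincides with its average across every interior edge, i.e. $u_h$ is single valued on $\mc{E}_h^\circ$.

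The crux of the argument, and the step I expect to be the main obstacle, is that the energy inequality controls $\bq_h$ in the full interior norm but $u_h$ only on the skeleton and boundary; the interior $L^2$-mass of $u_h$ never appears with a favorable sign (it sits on the right, multiplied by $\|f\|_{0,\Omega}$), so $u_h\equiv0$ cannot be read off directly and the vanishing boundary/continuity information must be propagated into the interior through the discrete equations. To this end I would set $\bq_h=0$ in \eqref{Hdg discrete-1}: since $\mc{A}_T$ is linear in its first argument and $\mc{F}_{1,h}$ vanishes under homogeneous data, the equation collapses to $\mc{B}_h(\bv,u_h)=0$ for all $\bv\in\bV_h$. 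Recalling the definition of $\mc{B}_h$ in \eqref{def:operators} and integrating by parts element-wise, the volume contribution becomes $(\nabla u_h,\bv)_{\mc{T}_h}$, while the skeleton contributions combine into terms proportional to the inter-element jump of $u_h$ on $\mc{E}_h^\circ$ and to $u_h$ on $\partial\Omega_h$; both vanish, the former by single-valuedness across interior edges and the latter by $u_h=0$ on $\partial\Omega_h$. Thus $(\nabla u_h,\bv)_{\mc{T}_h}=0$ for all $\bv\in\bV_h$, and choosing $\bv=\nabla u_h$---admissible since $\nabla u_h|_T\in[\md{P}_{k-1}(T)]^2\subset\bV_h|_T$---yields $\nabla u_h\equiv0$. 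Hence $u_h$ is piecewise constant; its continuity across interior edges makes it constant on each connected component of $\Omega_h$, and the vanishing of its trace on $\partial\Omega_h$ forces $u_h\equiv0$. Having shown $(\bq_h,u_h)=(\boldsymbol 0,0)$, injectivity holds, and since the system is square and finite-dimensional we conclude that \eqref{scheme-augmented} is well-posed for every $h<h_0$.
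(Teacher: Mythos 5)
Your proposal is correct and follows essentially the same route as the paper: homogeneous data in the energy inequality of Lemma~\ref{lemma:energy} (valid once $\alpha_h,\beta_h<1$ for small $h$) gives $\bq_h=\boldsymbol 0$, $u_h=0$ on $\partial\Omega_h$ and single-valuedness of $u_h$ across interior edges, after which the equation tested against $\bv\in\bV_h$ collapses to $(\nabla u_h,\bv)_{\mc{T}_h}=0$ and the choice $\bv=\nabla u_h$ finishes the argument. The only difference is cosmetic: you correctly invoke \eqref{Hdg discrete-1} for this last step, whereas the paper's text cites \eqref{Hdg discrete-2} (an apparent typo), and you spell out the finite-dimensional injectivity-implies-surjectivity reduction that the paper leaves implicit.
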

\begin{proof}
Let $f\equiv 0$ and $\xi_0=0$. By \eqref{ineq:energy} we obtain that $\bq_h=\boldsymbol{0}$. Moreover, since $\tau>0$, we have that $u_h=0$ on the boundary $\Omega_h$ and $u_h = \prom{u_h}$ on $\partial \mc{T}_h$; therefore $u_h$ is continuous. These facts, together with \eqref{Hdg discrete-2} lead to
\begin{align*}
0 =& -(u_h, \nabla \cdot \bv)_{\mc{T}_h} + \langle \jump{\bv}, u_h\rangle_{\mc{E}_h^{\circ}}  =(\nabla u_h,\bv) \qquad \forall \, \bv\in \bV_h.
\end{align*}
Thus, taking $\bv =\nabla u_h$ we conclude that $u_h = 0$ since it vanishes at the boundary.
\end{proof}

The energy estimate in Lemma \ref{lemma:energy} provides the stability bound for the vector-valued unknown $\bq_h$. On the other hand, the stability for the scalar approximation $u_h$ can be obtained by a duality argument that we omit since it is not need it for the analysis of the coupled problem. We refer the reader to the proof of Lemma 3.5 in \cite{CoQiuSo2014} or the proof of Theorem 3.1 in \cite{SoVa2022} for details regarding the duality argument employed in this type of unffited HDG methods. Therefore, it is possible to conclude that there is a constant $C_{HDG}> 0$, independent of $h$, such that

\begin{align}\label{ estim HDG}
 \J(\mbf{q}_h,u_h)   +\|u_h\|_{\Omega_h} \leq C_{HDG}\, \left( \| f\|_{\Omega_h} +  \|\kap^{1/2}h^{-1/2}\xi_0\circ \phi\|_{\partial\Omega_h} \right),
    \end{align}
where, for convenience of notation of the forthcoming analysis, we have denoted
\begin{equation}\label{def:J}
\J(\mbf{q}_h,u_h):=  \left(  \|\kap^{-1/2}\bq_h\|_{\Omega_h}^2 + \|\tau^{1/2}u_h\|_{\partial \Omega_h}^2
+\|\tau^{1/2} (u_h - \prom{u_h})\|_{\partial \mc{T}_h\setminus \partial \Omega_h}^2
+ \|\tau^{-1/2} \jump{\bq_h}\|_{\mc{E}_h^\circ}^2\right)^{1/2}.
\end{equation}

Having established the well posedness of the discrete formulation, in the following section we will study the behavior of the discretization error.

\subsection{\textit{A priori} error analysis}
To establish a priori error bounds for the HDG discretization we will make use of a tool introduced by Francisco--Javier Sayas, Jay Gopalakrishnan and Bernardo Cockburn in \cite{CoGoSa2010}. The idea is to use a projection, known as the \textit{HDG projection}, to decompose the discretization into a component involving the approximation properties of the discrete spaces $\boldsymbol V_h$ and $W_h$, and another component involving the error introduced by projecting into these spaces. 
The HDG projection over $\mbf{V}_h\times W_h$, denoted by $\bsy{\Pi}(\mbf{q},u):=(\bsy{\Pi}_{\mathrm v}\mbf{q},\Pi_{\mathrm w} u)$, is the unique element-wise solution pair of
	\begin{subequations}\label{eq:HDGprojector}
	\begin{align}
	(\bsy{\Pi}_{\mathrm v}\mbf{q}, \mbf{v})_T &= (\mbf{q}, \mbf{v})_T &  &\forall \ \mbf{v} \in [\md{P}_{k-1}(T)]^e, \label{properties projector Pi_v} \\
	(\Pi_{\mathrm w} u, w)_T &= (u,w)_T & &\forall \ w\in \md{P}_{k-1}(T), \label{properties projector Pi_w} \\
	\pdual{\bsy{\Pi}_{\mathrm v}\mbf{q}\cdot \mbf{n} + \tau \Pi_{\mathrm w} u, \mu}_{e} &= \pdual{\mbf{q} \cdot \mbf{n} + \tau u, \mu}_e & &\forall \ \mu \in \md{P}_k(e), \label{properties projector Pi_h}
	\end{align}
    \end{subequations}
for every element $T\in \mc{T}_h$, and $e\subset \partial T$. The approximation properties of $\bsy{\Pi}$ are stated in Section \ref{app:HDG-proj}. Using this projection we can then define
    \begin{equation*}
\errorq := \boldsymbol{\Pi}_{\bV}\bq - \bq_h \qquad \erroru:= \Pi_W u - u_h \qquad \text{ and } \qquad     \bI^{\bq}:= \bq-\boldsymbol{\Pi}_{\bV}\bq  \qquad I^u:=u - \Pi_W u,
    \end{equation*}
where $\boldsymbol \Pi_{\boldsymbol V}$ is the HDG projector onto $\mathbf V_h$, and $\Pi_W$ is the HDG projector onto $W_h$. The terms $\boldsymbol \varepsilon^{\boldsymbol q}$ and $\varepsilon^u$ are known as \textit{the projections of the errors} and the terms $\bI^{\bq}$ and $I^u$ are the \textit{errors of the projections}. The full discretization error can then be split as 
    \begin{equation*}
    \bq-\bq_h = \errorq +\bI^{\bq} \quad \text{ and } \quad u-u_h =  \erroru + I^u.
    \end{equation*}
We will now show that the scheme \eqref{scheme-augmented} is consistent and the discretization error is driven solely by the approximation properties of the discrete spaces, as encoded by $\boldsymbol I^{\boldsymbol q}$, and $I^{u}$. We start by noting that from \eqref{Hdg discrete-1} and the decompositions above, it follows that
    \begin{equation}\label{eq:AVF}
    \mc{A}_h(\bq- \errorq - \bI^{\bq}, \bv) 
    +\mc{A}_T(\bq- \errorq - \bI^{\bq},\bv)+ \mc{B}_h(\bv,u- \erroru - I^u) = \mc{F}_{1,h}(\bv) \qquad \forall \bv \in \bV_h.
    \end{equation}
However, since $\bq$ and $u$ satisfy \eqref{eq:interior-problem no coupled} in a distributional sense, we have that $\bq\in \bH(\bdiv;\Omega_h)$ and therefore $\jump{\bq}=0$ in $\mc{E}_h^{\circ}$. This also implies that $u\in H^1(\Omega_h)$ since $\nabla u = -\kap^{-1}\bq \in L^2(\Omega_h)$. Hence,
\begin{align*}
    \mc{A}_h(\bq, \bv) +&
    \mc{A}_T(\bq,\bv)+
    \mc{B}_h(\bv,u) - \mc{F}_{1,h}(\bv) \\=&
    (\kap^{-1} \bq, \bv)_{\mc{T}_h}  
     + \sum_{e\subset \partial \Omega_h } \int_e \left( \int_0^{l(\bx)} \kap^{-1} \bq(\bx + \boldsymbol{t}(\bx) s) \cdot \boldsymbol{t}(\bx) \right) \bv(\bx) \cdot \normalh \,ds \,dS_{\bx}\\
     &+\langle \jump{\bv}, \prom{u_h} \rangle_{\mc{E}_h^{\circ}}-(u, \nabla \cdot \boldsymbol v)_{\mc{T}_h}+ \langle \xi_0\circ\phi, \bv \cdot \normalh \rangle_{\partial\Omega_{h}}  \\
     =&
    (\kap^{-1} \bq, \bv)_{\mc{T}_h}  
     +\langle \jump{\bv}, \prom{u} \rangle_{\mc{E}_h^{\circ}}-(u, \nabla \cdot \boldsymbol v)_{\mc{T}_h}+ \langle u, \bv \cdot \normalh \rangle_{\partial\Omega_{h}},
\end{align*}
where in the last equality we have used the fact that $\bq$ satisfies the transfer equation \eqref{def:xi_o-tilde} and $u$ satisfies \eqref{eq:interior3}. Then, by integrating by parts and considering equation \eqref{eq:interior2}, we obtain that
\begin{align*}  
  \mc{A}_h(\bq, \bv) +
    \mc{A}_T(\bq,\bv)+
    \mc{B}_h(\bv,u) - \mc{F}_{1,h}(\bv) =&\langle \jump{\bv}, \prom{u} \rangle_{\mc{E}_h^{\circ}}
  -\langle u,\bv \cdot \normalh\rangle_{\partial \mc{T}_h} + \langle u, \bv \cdot \normalh \rangle_{\partial\Omega_{h}} = 0.
\end{align*}

    Analogously, from \eqref{Hdg discrete-2} we have
\begin{equation}\label{eq:BBCF}
        \mc{B}_T(\bq - \errorq - \bI^{\bq},w) + \mc{B}_h(\bq - \errorq - \bI^{\bq},w) - \mc{C}_h(u- \erroru - I^u,w ) = \mc{F}_{2,h}(w).
\end{equation}
Analyzing the terms above that involve $\boldsymbol q\in \bH(\bdiv;\Omega_h)$ and $u\in H^1(\Omega_h)$, and using again the facts that $\bq$ satisfies the transfer equation \eqref{def:xi_o-tilde} and $u$ satisfies \eqref{eq:interior3}, it is easy to verify that
\begin{align*}
 \mc{B}_T(\bq,w) + \mc{B}_h(\bq,w)-\mc{C}_h(u,w) -\mc{F}_{2,h}(w) 
 =&- \langle \tau\, u, w \rangle_{\partial \mc{T}_h}  + 2 \langle \tau\, u,\prom{w}\rangle_{\mc{E}_h^{\circ}}
 + \langle \tau\, u, w \rangle_{\partial \Omega_h} =0
\end{align*}
%
Putting these arguments together it follows from \eqref{eq:AVF} and \eqref{eq:BBCF} that the scheme is consistent and the following error equations for $( \errorq ,\erroru )\in\bV_h\times W_h $ hold
\begin{subequations}
    \begin{align*}
     \mc{A}_h(\errorq, \bv) 
     +\mc{A}_T(\errorq, \bv) +
     \mc{B}_h(\bv,\erroru) &=  -\mc{A}_h(\bI^{\bq}, \bv) 
     -\mc{A}_T(\bI^{\bq}, \bv) +
     \mc{B}_h(\bv,I^u),\\
    \mc{B}_T(\errorq,w) +\mc{B}(\errorq,w) - \mc{C}(\erroru,w ) &=  - \mc{B}_T(\bI^{\bq},w) - \mc{B}_h(\bI^{\bq},w) + \mc{C}_h(I^{u}, w), 
\end{align*}
$\forall (\bv,w)\in \bV_h\times W_h $.
\end{subequations}
%
%
Now, by the orthogonality properties of the HDG projection \eqref{eq:HDGprojector},
we deduce that
\[
\mc{A}_h(\bI^{\bq}, \bv) 
   +
     \mc{B}_h(\bv,I^u) = (\boldsymbol{\kappa}^{-1}\, \bI^{\bq}, \bv)_{\mc{T}_h}
\]
and 
\[
 - \mc{B}_h(\bI^{\bq},w) + \mc{C}_h(I^{u}, w) = 0.
 \]
In this way, we conclude that the projection of the errors  $( \errorq ,\erroru )\in\bV_h\times W_h $ satisfy
\begin{subequations}
\label{error-scheme2}
    \begin{align}
     \mc{A}_h(\errorq, \bv) 
     +\mc{A}_T(\errorq, \bv) +
     \mc{B}_h(\bv,\erroru) &= \mc{G}_1 (\bv),\\
    \mc{B}_T(\errorq,w)+ \mc{B}(\errorq,w) - \mc{C}(\erroru,w ) &=  \mc{G}_2(w).
\end{align}
$\forall (\bv,w)\in \bV_h\times W_h $,
\end{subequations}
with
\[
\mc{G}_1(\bv): = -(\boldsymbol{\kappa}^{-1}\, \bI^{\bq}, \bv)_{\mc{T}_h}  
     -\mc{A}_T(\bI^{\bq}, \bv)
\]
and
\[
\mc{G}_2(w): =  - \mc{B}_T(\bI^{\bq},w).
\]

\begin{thm}\label{A1:thm:estim normH eroor}
For $h$ sufficiently small, there hold
\begin{align}\label{error_estimate_q}
\|\kap^{-1/2}(\mbf{q}-\mbf{q}_h)\|_{\Omega_h}
\lesssim\,& 
 \|\kap^{-1/2}\bI^{\bq}\|_{\Omega_h}
        +  (R_h \underline{\kap}^{-2} \overline{\kap}+\overline{{\tau}} )^{1/2}  \|\kap^{-1/2}\bI^{\bq}\|_{\Omega_h^c}.       
\end{align}
Moreover, under elliptic regularity it holds
\begin{align}\label{error_estimate_u}
\|u - u_h\|_{\Omega_h}
\lesssim &\; 
\left(h+
 \left(h\overline{\tau}^{1/2}+h^{1/2}\right)R_h\right)\|\kap^{-1/2}\bI^{\bq}\|_{\Omega_h}\nonumber\\
 \,&  + \left(h^{1/2}\, \overline{\tau}^{1/2}R_h+1\right) \|I^{u} \|_{\Omega_h}
        + R_h\left(\overline{\tau}^{1/2} +h^{1/2}\right) \|h\partial _{\bt}(\bI^{\bq}\cdot\bt)\|_{\Omega_h^c}.        
\end{align}
\end{thm}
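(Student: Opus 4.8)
The two bounds are obtained by different mechanisms: \eqref{error_estimate_q} rests on the energy identity already exploited in Lemma \ref{lemma:energy}, now applied to the error system \eqref{error-scheme2}, whereas \eqref{error_estimate_u} requires an Aubin--Nitsche duality argument. For \eqref{error_estimate_q} the plan is to reproduce the coercivity computation of Lemma \ref{lemma:energy} on the error equations. Taking $\bv=\errorq$ and $w=\erroru$ in \eqref{error-scheme2} and subtracting yields
\begin{equation*}
\|\kap^{-1/2}\errorq\|_{\Omega_h}^2+\tfrac12\|\tau^{-1/2}\jump{\errorq}\|_{\mc{E}_h^\circ}^2+\mc{A}_T(\errorq,\errorq)+\mc{B}_T(\errorq,\erroru)+\mc{C}_h(\erroru,\erroru)=\mc{G}_1(\errorq)-\mc{G}_2(\erroru).
\end{equation*}
The identity \eqref{Ch-semidefinite} turns $\mc{C}_h(\erroru,\erroru)$ into the nonnegative boundary norms appearing in $\J(\errorq,\erroru)$, while the estimates \eqref{boundAT} and \eqref{boundBT}, applied verbatim with $(\bq_h,u_h)$ replaced by $(\errorq,\erroru)$, allow one to move $\mc{A}_T$ and $\mc{B}_T$ to the right at the cost of the factors $\alpha_h$ and $\beta_h$. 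For $h$ small enough that $\alpha_h,\beta_h<1$ this produces a coercive lower bound of the form $(1-\alpha_h)\|\kap^{-1/2}\errorq\|_{\Omega_h}^2+(1-\beta_h)\|\tau^{1/2}\erroru\|_{\partial\Omega_h}^2+\cdots\lesssim|\mc{G}_1(\errorq)|+|\mc{G}_2(\erroru)|$, exactly as in the corollary following Lemma \ref{lemma:energy}.

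It then remains to bound the right-hand side. Recalling $\mc{G}_1(\bv)=-(\kap^{-1}\bI^{\bq},\bv)_{\mc{T}_h}-\mc{A}_T(\bI^{\bq},\bv)$, the first contribution is controlled by Cauchy--Schwarz by $\|\kap^{-1/2}\bI^{\bq}\|_{\Omega_h}\|\kap^{-1/2}\errorq\|_{\Omega_h}$, the origin of the first term in \eqref{error_estimate_q}. The transfer contribution $\mc{A}_T(\bI^{\bq},\errorq)$ is estimated as in \eqref{boundAT}, except that the inner path integral now samples $\bI^{\bq}$ on the extension patches; invoking the norm equivalence \eqref{norm-equivalence}, the definition of $C^e_{ext}$ in \eqref{eq:CextCinv}, and a discrete trace inequality on $\errorq$ yields a bound proportional to $(R_h\underline{\kap}^{-2}\overline{\kap})^{1/2}\|\kap^{-1/2}\bI^{\bq}\|_{\Omega_h^c}\|\kap^{-1/2}\errorq\|_{\Omega_h}$. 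Likewise $\mc{G}_2(\erroru)=-\mc{B}_T(\bI^{\bq},\erroru)$ is handled as in \eqref{boundBT}, producing a factor $\overline{\tau}^{1/2}\|\kap^{-1/2}\bI^{\bq}\|_{\Omega_h^c}\|\tau^{1/2}\erroru\|_{\partial\Omega_h}$. Absorbing the $\errorq$ and $\erroru$ factors with Young's inequality gives the stated bound on $\|\kap^{-1/2}\errorq\|_{\Omega_h}$, and the triangle inequality $\|\kap^{-1/2}(\bq-\bq_h)\|_{\Omega_h}\le\|\kap^{-1/2}\errorq\|_{\Omega_h}+\|\kap^{-1/2}\bI^{\bq}\|_{\Omega_h}$ closes \eqref{error_estimate_q}.

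For \eqref{error_estimate_u} I would run a duality argument. Introduce the adjoint problem: find $(\boldsymbol\Phi,\Psi)$ with $\kap^{-1}\boldsymbol\Phi+\nabla\Psi=0$, $\nabla\cdot\boldsymbol\Phi=\erroru$, and homogeneous Dirichlet trace, for which elliptic regularity supplies $\|\boldsymbol\Phi\|_{1,\Omega}+\|\Psi\|_{2,\Omega}\lesssim\|\erroru\|_{\Omega_h}$. Testing the error equations \eqref{error-scheme2} with the HDG projection $(\boldsymbol\Pi_{\bV}\boldsymbol\Phi,\Pi_W\Psi)$ and using the consistency of the adjoint problem, one rewrites $\|\erroru\|_{\Omega_h}^2$ as a sum of: the $L^2$ pairing of $\bI^{\bq}$ against $\boldsymbol\Phi-\boldsymbol\Pi_{\bV}\boldsymbol\Phi$, controlled by the approximation estimates of $\boldsymbol\Pi_{\bV}$; the $\mc{C}_h$-terms carrying $I^u$; terms carrying $\|\kap^{-1/2}\errorq\|_{\Omega_h}$, into which the already-established bound \eqref{error_estimate_q} is fed; and the transfer contributions on the extension patches. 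The extra power of $h$ that upgrades the flux rate to the scalar rate comes from pairing these projection-error factors against the smooth dual solution and using $\|\Psi\|_{2,\Omega}\lesssim\|\erroru\|_{\Omega_h}$, after which one divides by $\|\erroru\|_{\Omega_h}$; the final triangle inequality $\|u-u_h\|_{\Omega_h}\le\|\erroru\|_{\Omega_h}+\|I^u\|_{\Omega_h}$ accounts for the $+1$ coefficient on $\|I^u\|_{\Omega_h}$ in \eqref{error_estimate_u}.

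The main obstacle is the treatment of the transfer terms in the duality step, which is precisely what generates the last term $R_h(\overline{\tau}^{1/2}+h^{1/2})\|h\partial_{\bt}(\bI^{\bq}\cdot\bt)\|_{\Omega_h^c}$ of \eqref{error_estimate_u}. Unlike the energy estimate, where the path integral could be bounded crudely by $\|\kap^{-1/2}\bI^{\bq}\|_{\Omega_h^c}$, recovering the additional power of $h$ forces one to integrate by parts along each transfer segment, replacing the path integral of $\bI^{\bq}\cdot\bt$ by a boundary term plus the path integral of its tangential derivative $\partial_{\bt}(\bI^{\bq}\cdot\bt)$; the norm equivalence \eqref{norm-equivalence}, the inverse constant $C^e_{inv}$ from \eqref{eq:CextCinv}, and the proximity bounds $l(\bx)\le h_{T_e}r_e$ together with $R_h\to0$ then distribute the correct powers of $h$, $R_h$ and $\overline{\tau}$ over the extension region. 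Keeping track of these factors on $\Omega_h^c$ while simultaneously substituting the coupled estimate \eqref{error_estimate_q} into the scalar bound is the delicate bookkeeping of the argument.
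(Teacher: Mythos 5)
Your proposal follows essentially the same route as the paper: the bound \eqref{error_estimate_q} is obtained exactly as you describe, by rerunning the energy argument of Lemma \ref{lemma:energy} on the error system \eqref{error-scheme2}, estimating $\mc{G}_1$ and $\mc{G}_2$ via Cauchy--Schwarz, the norm equivalence \eqref{norm-equivalence} and discrete trace inequalities, and closing with the triangle inequality. For \eqref{error_estimate_u} the paper simply invokes the duality argument of Lemma 3.9 in \cite{CoQiuSo2014} without reproducing it, and your sketch of that argument (including the integration by parts along the transfer segments that produces the $\|h\partial_{\bt}(\bI^{\bq}\cdot\bt)\|_{\Omega_h^c}$ term) is consistent with that reference.
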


\begin{proof}
By proceeding exactly as in the proof of Lemma \ref{lemma:energy}, but in the context of the equation of the projection of the errors \eqref{error-scheme2}, for $h$ sufficiently small, we deduce that
\begin{align*}
\J(\errorq,\erroru)\lesssim  |\mc{G}_1(\errorq)|  +  |\mc{G}_2(\erroru)|,
\end{align*}
where we recall the definition of $\J$ in \eqref{def:J}.
\color{black}
In order to bound the terms on the right-hand side, we employ the Cauchy-Schwarz and discrete trace inequalities and obtain that
    \begin{align*}
        |\mc{G}_1(\errorq)|  \leq&  \|\kap^{-1/2}\bI^{\bq}\|_{\Omega_h}\|\kap^{-1/2}\errorq\|_{\Omega_h}  
        + \underline{\kap}^{-1}\sum_{e \subset \partial \Omega_h} \vertiii{\bI^{\bq}}_e \, \|l^{1/2}\,\errorq \cdot \bn\|_{e} \\
       \lesssim & 
        \left( \|\kap^{-1/2}\bI^{\bq}\|_{\Omega_h}^2+  R_h \underline{\kap}^{-2} \overline{\kap} \sum_{e \subset \partial \Omega_h} \vertiii{\bI^{\bq}}_e^2\right)^{1/2} \|\kap^{-1/2}\errorq\|_{\Omega_h}^2
    \end{align*}
where we have also used the fact that $l(\bx)\lesssim R_h h$ for all $\bx\in \partial \Omega_h$.    
Similarly,
   \begin{align*}
        |\mc{G}_2(\erroru)|  \leq& \left(\sum_{e \subset \partial \Omega_h} \vertiii{\tau^{1/2}\bI^{\bq}}_e^2\right)^{1/2} \|\tau^{1/2} \erroru\|_{\partial\Omega_h}.
   \end{align*}
Therefore, by combining the the above inequalities, we obtain
\begin{align*}
\J(\errorq,\erroru)^2 \lesssim 
 \|\kap^{-1/2}\bI^{\bq}\|_{\Omega_h}^2
        +  (R_h \underline{\kap}^{-2} \overline{\kap}+\tau) \sum_{e \subset \partial \Omega_h} \vertiii{\bI^{\bq}}_e^2
\end{align*}
and \eqref{error_estimate_q} follows by the fact that $\|\kap^{-1/2}(\mbf{q}-\mbf{q}_h)\|_{\Omega_h}\leq \|\kap^{-1/2}\bI^{\bq}\|_{\Omega_h}+\J(\errorq,\erroru)$ and the norm equivalence \eqref{norm-equivalence}.
On the other hand, by a duality argument (Lemma 3.9 in \cite{CoQiuSo2014}), it is possible to derive that
\begin{align*}
\|\erroru\|_{\Omega_h} \lesssim 
 (h+(h\overline{\tau}^{1/2}+h^{1/2})R_h)\|\kap^{-1/2}\bI^{\bq}\|_{\Omega_h}
 + \overline{\tau}^{1/2}h^{1/2} R_h\|I^{u} \|_{\Omega_h}
        +  R_h(h^{1/2} +\tau^{1/2}) \|h\partial _{\bt}(\bI^{\bq}\cdot\bt)\|_{\Omega_h^c},
\end{align*}
which implies \eqref{error_estimate_u}.
\end{proof}
\begin{crl} If $(\bq,u)\in\bH^{k+1}(\Omega)\times H^{k+1}(\Omega)$ and $\tau$ is of order one, then
\begin{align}
\|\kap^{-1/2}(\bq - \bq_h)\|_{\Omega_h} +\|u-u_h\|_{\Omega_h}  \lesssim  h^{k+1} \left(|\bq|_{k+1,\Omega}+|u|_{k+1,\Omega}\right).
\end{align}

Moreover, if $(\bq,u)\in\bH^{1}(\Omega)\times H^{1}(\Omega)$, then 
\begin{align}\label{ineq:Jqu}
\J(\bq-\bq_h,u-u_h)
\lesssim& (\underline{\kap}^{-1/2}+\underline{\tau}^{-1/2}h^{1/2}+1) |\bq|_{1,\Omega}
+(\overline{\tau}^{1/2}+1)\overline{\tau}^{1/2} |u|_{1,\Omega}.
\end{align}
\end{crl}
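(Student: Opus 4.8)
The plan is to feed the error identities of Theorem~\ref{A1:thm:estim normH eroor} into the approximation properties of the HDG projection $\bsy{\Pi}$. Estimates \eqref{error_estimate_q} and \eqref{error_estimate_u} already express the discretization error purely through the \emph{errors of the projection} $\bI^{\bq}=\bq-\bsy{\Pi}_{\mathrm v}\bq$ and $I^u=u-\Pi_{\mathrm w}u$, so the only missing ingredient is to insert the standard bounds recalled in Section~\ref{app:HDG-proj}, namely $\|\bI^{\bq}\|_{\Omega_h}\lesssim h^{s}|\bq|_{s,\Omega}$ and $\|I^u\|_{\Omega_h}\lesssim h^{s}(|u|_{s,\Omega}+|\bq|_{s,\Omega})$ for $1\le s\le k+1$, together with their counterparts on the extension region. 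For the quantities supported on $\Omega_h^c=\Omega\setminus\Omega_h$ I would first pass from the patch norm $\vertiii{\cdot}_e$ to the Lebesgue norm via the equivalence \eqref{norm-equivalence} and the extrapolation constants $C_{ext}^e,C_{inv}^e$ of \eqref{eq:CextCinv}, whose $h$-independence lets me bound $\|\bI^{\bq}\|_{\Omega_h^c}$ and $\|h\partial_{\bt}(\bI^{\bq}\cdot\bt)\|_{\Omega_h^c}$ at the same rates as on $\Omega_h$.

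For the first estimate I would set $s=k+1$ and use the standing assumptions that $\tau$ is of order one ($\overline{\tau}\lesssim1$) and that $R_h$ is bounded. In \eqref{error_estimate_q} the leading contribution is $\|\kap^{-1/2}\bI^{\bq}\|_{\Omega_h}\lesssim h^{k+1}|\bq|_{k+1,\Omega}$, while the prefactor $(R_h\underline{\kap}^{-2}\overline{\kap}+\overline{\tau})^{1/2}$ multiplying the extension term is $O(1)$, giving $\|\kap^{-1/2}(\bq-\bq_h)\|_{\Omega_h}\lesssim h^{k+1}|\bq|_{k+1,\Omega}$. In \eqref{error_estimate_u} the coefficient $(h^{1/2}\overline{\tau}^{1/2}R_h+1)$ of $\|I^u\|_{\Omega_h}$ is $O(1)$, producing the optimal $h^{k+1}|u|_{k+1,\Omega}$ term, whereas the coefficients $h+(h\overline{\tau}^{1/2}+h^{1/2})R_h$ and $R_h(\overline{\tau}^{1/2}+h^{1/2})$ carry at least one extra power of $h$ (or are $O(1)$ against a genuinely $h^{k+1}$ flux quantity), so all of them are subsumed into $h^{k+1}(|\bq|_{k+1,\Omega}+|u|_{k+1,\Omega})$. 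Adding the two bounds yields the first displayed inequality.

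For the second estimate I would aim at a uniform bound rather than a rate, exploiting that $\J$ is a seminorm on pairs, so the triangle inequality
\[
\J(\bq-\bq_h,u-u_h)\le \J(\errorq,\erroru)+\J(\bI^{\bq},I^u)
\]
holds via the decompositions $\bq-\bq_h=\errorq+\bI^{\bq}$ and $u-u_h=\erroru+I^u$. The first summand is controlled by the energy bound established inside the proof of Theorem~\ref{A1:thm:estim normH eroor}, namely $\J(\errorq,\erroru)\lesssim\|\kap^{-1/2}\bI^{\bq}\|_{\Omega_h}+(R_h\underline{\kap}^{-2}\overline{\kap}+\overline{\tau})^{1/2}(\sum_{e}\vertiii{\bI^{\bq}}_e^2)^{1/2}$. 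For the second summand I would bound each of the four pieces of $\J$ in \eqref{def:J} separately with $s=1$: the volume piece gives $\underline{\kap}^{-1/2}\|\bI^{\bq}\|_{\Omega_h}\lesssim\underline{\kap}^{-1/2}|\bq|_{1,\Omega}$; the boundary and skeleton pieces of $I^u$, treated with a discrete trace inequality and $\tau^{1/2}\lesssim\overline{\tau}^{1/2}$, produce the $(\overline{\tau}^{1/2}+1)\overline{\tau}^{1/2}|u|_{1,\Omega}$ contribution; and the jump piece $\|\tau^{-1/2}\jump{\bI^{\bq}}\|_{\mc{E}_h^\circ}$, where $\jump{\bq}=0$ forces $\jump{\bI^{\bq}}=-\jump{\bsy{\Pi}_{\mathrm v}\bq}$, is bounded with $\tau^{-1/2}\le\underline{\tau}^{-1/2}$ and a trace estimate by $\underline{\tau}^{-1/2}h^{1/2}|\bq|_{1,\Omega}$. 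Collecting the constants reproduces \eqref{ineq:Jqu}.

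The main obstacle is the careful treatment of the extension-region contributions: the projection bounds of Section~\ref{app:HDG-proj} are element-based, so controlling $\|\bI^{\bq}\|_{\Omega_h^c}$ and especially $\|h\partial_{\bt}(\bI^{\bq}\cdot\bt)\|_{\Omega_h^c}$ requires the extrapolation stability \eqref{eq:CextCinv} together with the shrinking measure $\mu(\Omega\setminus\Omega_h)\to0$ and $R_h\to0$, and one must verify that the resulting constants are genuinely independent of $h$. A secondary, purely bookkeeping, difficulty is matching the exact algebraic combination of $\underline{\kap},\overline{\kap},\overline{\tau},\underline{\tau}$ in \eqref{ineq:Jqu}, which forces one to track where each factor $\tau^{\pm1/2}$ and $\kap^{\pm1/2}$ enters rather than absorbing them into a generic constant.
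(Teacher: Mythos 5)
Your proposal is correct and follows essentially the same route as the paper: the first bound is obtained by inserting the HDG-projection approximation estimates of Section \ref{app:HDG-proj} into Theorem \ref{A1:thm:estim normH eroor}, and the second by the splitting $\J(\bq-\bq_h,u-u_h)\leq \J(\errorq,\erroru)+\J(\bI^{\bq},I^u)$, the energy bound on $\J(\errorq,\erroru)$ from the proof of that theorem, and a term-by-term estimate of $\J(\bI^{\bq},I^u)$ with $l_u=l_{\bq}=0$. Your bookkeeping of the $\kap$, $\tau$ and $h$ factors (and of the extension-region terms via \eqref{norm-equivalence} and \eqref{eq:CextCinv}) matches, up to harmless powers of $h\lesssim 1$, the computation the paper carries out.
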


\begin{proof}
The first inequality follows from the approximation properties of the HDG projection stated in Section \ref{app:HDG-proj}. On the other hand, 
\begin{align*}
\J(\bq-\bq_h,u-u_h)\leq&   \J(\errorq,\erroru) + \J(\bI^{\bq},I^u)\\  
\lesssim&\|\kap^{-1/2}\bI^{\bq}\|_{\Omega_h}
        +  (R_h \underline{\kap}^{-2} \overline{\kap}+\overline{{\tau}} )^{1/2}  \|\kap^{-1/2}\bI^{\bq}\|_{\Omega_h^c}+ \J(\bI^{\bq},I^u).   
\end{align*}
But, using the approximation estimates \eqref{C0:eq:projection_error}, we have
\begin{align*}
\J(\bI^{\bq},I^u)^2= & \|\kap^{-1/2}\bI^{\bq}\|_{\Omega_h}^2 + \|\tau^{1/2}I^u\|_{\partial \Omega_h}^2
+\|\tau^{1/2} (I^u - \prom{I^u})\|_{\partial \mc{T}_h\setminus \partial \Omega_h}^2
+ \|\tau^{-1/2} \jump{\bI^{\bq}}\|_{\mc{E}_h^\circ}^2\\
\lesssim& (\underline{\kap}^{-1}+\underline{\tau}^{-1}h^{-1}) h^2|\bq|_{1,\Omega}^2
+(\overline{\tau}+1)\overline{\tau} h^2|u|_{H^{1}(\Omega)}^2
+h^2|\nabla \cdot \bq|_{1,\Omega}^2
\end{align*}
and \eqref{ineq:Jqu} follows.
\color{black}
\end{proof}

\section{BEM discretization of the exterior problem}\label{sec:BEM}

For the discretization of the integral equation \eqref{eq:IntegralProblem} we will take advantage of the fact that the parametrization of artificial boundary $\Gamma$ is smooth and does not intersect with the support of the source term. It is a standard result in potential theory that these two conditions imply that the densities $\lambda$ and $g$ are both $C^\infty$, which allows for a simple, spectrally convergent discretization using interpolating trigonometric polynomials---an idea that had been implemented in \cite{MaMe2002} coupled with the finite element method over curved triangulations. For two dimensional problems, an exhaustive account of the theory of periodic boundary integral equations and their approximation can be found in the monograph by Saranen and Vainikko \cite{SaVa2002}. Here we will present only those basic results that will be used for the coupled formulation that will be described later.

If we let $\boldsymbol y$ be a $2\pi-$periodic, $C^\infty$ parametrization of $\Gamma$ such that $|\boldsymbol y^\prime(\cdot)|>0$ and $t\neq s\in[0,2\pi)$ implies that $\boldsymbol y(t)\neq \boldsymbol y(s)$, then the integral operators appearing in \eqref{eq:IntegralProblem} can be written in parametric form as
\[
\mathcal V g (\boldsymbol x(t)) = \int_{0}^{2\pi} V(s,t) \lambda\circ\boldsymbol y(s) |\boldsymbol y^\prime(s)|ds \quad \text{ and } \quad \mathcal K g (\boldsymbol x(t)) = \int_{0}^{2\pi} K(s,t) \varphi\circ\boldsymbol y(s) |\boldsymbol y^\prime(s)|ds.
\]
Where the integral kernels are the 2D Green function for the minus Laplacian and its normal derivative, namely
\[
V(s,t) : = -\frac{1}{2\pi}\log\left|\boldsymbol y(s)- \boldsymbol y(t)\right| \quad \text{ and } \quad K(s,t) : = \frac{1}{2\pi}\frac{\left(\boldsymbol y(s)- \boldsymbol y(t)\right)\cdot\boldsymbol n(\boldsymbol y(s))}{\left|\boldsymbol y(s)- \boldsymbol y(t)\right|^2}.
\]
The idea is then to discretize the parameterization of $\Gamma$ into $2n$ equispaced points  $t_0,\ldots,t_{2n-1} \in [0,2\pi)$ and use these points as interpolation nodes to collocate equation \eqref{integral-identity}. Due to the periodicity, it is natural to use trigonometric polynomials as a basis, and we will now introduce two spaces of trigonometric polynomials 
    \begin{equation*}
    \md{T}_n := \left\{ \sum_{j=0}^n a_j \cos(jt) + \sum_{j=1}^{n-1} b_j \sin(jt)  : a_j, b_j \in \md{R} \right\} , \quad \text{ and } \quad \md{T}_n^0 := \left\{ \lambda_n \in \md{T}_n : \int_0^{2\pi} \lambda_n\, ds = 0 \right\}.
    \end{equation*}
For real numbers $p \leq q$ and any function $\lambda\in H^q(0,2\pi)$ the space $\mathbb T_n$ has the following approximation property \cite{CaQu1982}
\begin{equation*}
\|\lambda - \mc{P}\lambda\|_{H^p(0,2\pi)} \leq (n/2)^{p-q}\|\lambda\|_{H^q(0,2\pi)},
\end{equation*}
where $\mc{P}$ is the $L^2$ projector onto $\mathbb T_n$. The Lagrangian basis for interpolation in $\mathbb T_n$ is given by
\[
L_j(t) := \frac{1}{2n}\left(1 + 2\sum_{k=1}^{n-1}\cos\left(k(t-t_j)\right) + \cos\left(n(t-t_n)\right)\right) \quad \text{for }\; j=0,1,\ldots,2n-1.
\]
These functions can be used to build the basis for $\mathbb T_n^0$, which is given by the set \[
\left\{ L_j - L_0: j =0,1,\ldots, 2n-1 \right\}.
\]
If we denote by $\mathbb Q_n^0$ the interpolation operator over $\mathbb T_n^0$, the following estimate holds \cite{SaVa2002} for $q >1/2$ and $0\leq p \leq q$:
\begin{equation*}
\|u - \mathbb Q_n^0 u\|_{H^p(0,2\pi)} \leq c_q (n/2)^{p-q}\|u\|_{H^q(0,2\pi)},
\end{equation*}
where $c_q = \left(1 + \sum_{j=1}^{\infty}\frac{1}{j^{2q}}\right)$. Therefore, if $\lambda: \Gamma \to \md{R}$ is known, the discrete version of the problem \eqref{eq:IntegralProblem} becomes that of finding $g_n$ such that $g_n \circ \by | \by'(\cdot)| \in \md{T}_n^0$, and 
    \begin{equation}\label{int-identity_app}
      \int_0^{2 \pi} \left( \frac{1}{2} g_n - \mc{K} g_n\right) (\by(s)) \psi(s) ds = -\int_0^{2\pi} (\mc{V}\, \lambda) (\by(s)) \psi(s) ds  \qquad \forall \, \psi \in \md{T}_n^0. 
    \end{equation}

Note that the term involving the constant $u_\infty$ drops out of the formulation when testing with $\psi\in\mathbb T_n^0$.  To determine $u_\infty$ we go back to \eqref{int-identity_app} and notice that we can \textit{define} an approximation $u_\infty^n$ to $u_\infty$ by testing with any $\varrho\in \mathbb  T_n\setminus\mathbb T_n^0$. Setting $\varrho=1$ then leads to
\[
     u_\infty^n :=\frac{1}{2\pi}\left( \int_0^{2\pi} (\mc{V}\, \lambda)(\by(s)) ds +  \int_0^{2 \pi} \left( \frac{1}{2} g_n - \mc{K} g_n\right)(\by(s)) ds  \right).
\]
    
Hence, we first solve \eqref{int-identity_app} for $\lambda_n$ and then  fix the value of $u_\infty^n$ by means of the definition above. It is clear that as the approximation $\lambda_n$ converges, the value of $u_\infty^n$ will converge as well. Pertaining the well-posedness of the discrete integral equation, it is pointed out that (as shown in \cite[Sec. 6.3--6.5]{SaVa2002}) the periodic operator $\mathcal V$ is a Fredholm operator of index $0$ over the periodic space 
\[
H^{-1/2}_0(0,2\pi):=\left\{\lambda \in H^{-1/2}(0,2\pi): \int_{0}^{2\pi}\lambda ds = 0\right\},
\]
from which the unique solvability of \eqref{int-identity_app} follows. Moreover, for a Galerkin approximation of  \eqref{int-identity_app} it can be shown \cite[Thm. 9.4.1]{SaVa2002} that the following error estimate holds
\[
\|g - g_n\|_{H^p(0,2\pi)} \leq c_q n^{p-q}\|g\|_{H^q(0,2\pi)} \quad \text{for }\; q-p>1/2.
\]
Combining this approximation result with the stability estimate \eqref{ineq:ContDependenceBIE} and the boundedness of the single layer operator $\mathcal V$ we arrive at
\begin{equation*}
\|g - g_n\|_{H^p(0,2\pi)} \leq C_{q,\mathcal V}\, n^{p-q} \|\lambda\|_{H^q(0,2\pi) }  \quad \text{for }\; q-p>1/2.
\end{equation*}

\section{Iterative coupled procedure}\label{sec:Coupling}
In \textit{Coupling at a distance}, the authors proposed an iterative method to find the solution to the original problem\eqref{eq:ext-diff-problem} by alternating between the solutions of the interior and exterior problems using 
HDG and spectral BEM respectively. The idea can be traced back to \cite{CoSa2011} and involves using the Dirichlet trace of $u$ over the artificial boundary as the unknown coupling variable and alternating between the solution of an interior and an exterior problem.

We start by observing that, from the discrete version of the transmission condition \eqref{eq:ContinuityQN}
    \begin{equation*}
    \int_0^{2\pi} \bq_h(\bx(s)) \cdot \bn(\bx(s)) \  \eta(s) ds + \int_0^{2\pi} \lambda_n(\bx(s)) \eta(s) ds  = 0 \qquad \forall \, \eta \in \md{T}_n^0,     
    \end{equation*}
the Neumann trace of the exterior problem can be written in terms of its interior counterpart as
    \begin{equation}\label{lambda-qh}
    \lambda_n = - \mc{P}(\bq_h\cdot\boldsymbol n),
    \end{equation}
where $\mc{P}: \bV_h \to \md{T}_n^0$, is the $L^2-$projector onto the space of mean zero trigonometric polynomials. This suggests the following iterative strategy: given an initial $g_0\in H^{1/2}(\Gamma)$, it can be used as Dirichlet datum for the HDG solver which will  produce a solution pair $(\boldsymbol q_h,u_h)$ to the interior problem \eqref{eq:interior-problem}. The flux $\boldsymbol q_h$ obtained in this fashion can then be transformed, using \eqref{lambda-qh}, into the Neumann datum for the exterior problem \eqref{eq:exterior_problem} and the process continues until the succesive solutions have stabilized. Note that $\boldsymbol n$ is the normal vector of the artificial boundary $\Gamma$ (rather than the normal vector of the computational boundary $\Gamma_h$, which is denoted by $\boldsymbol{n}_h$) hence, the approximation obtained on the computational domain $\Omega_h$ must be first extrapolated to $\Gamma$ and then projected onto $\mathbb T_n^0$ . 

This algorithm amounts to a Schur complement strategy where the Dirichlet-to-Neumann map (DtN) for the interior problem is approximated via HDG, and the Neumann-to-Dirichlet mapping (NtD) for the exterior problem is approximated via spectral BEM. As we have shown in the previous sections, both of these problems are uniquely and stably solvable, therefore, it remains to show that the iterated composition of these mappings will converge, and that the limits will in fact be the discrete Dirichlet and Neumann traces  over $\Gamma$ of the solution to \eqref{eq:ext-diff-problem}.

To explain the procedure at the continuous level we start by fixing $f\in L^2(\Omega)$ and $u_0\in H^{1/2}(\Gamma)$, and defining the mapping $T: H^{1/2}(\Gamma) \rightarrow H^{1/2}(\Gamma)$ that associates to $g\in H^{1/2}(\Gamma)$ the function $Tg\in H^{1/2}(\Gamma)$ given by the following two-step process:
\begin{subequations}\label{eq:IterativeProcess}
\begin{align}
\intertext{\textbf{Step 1:} Solve the interior Dirichlet boundary value problem}
\label{eq:Step1}
\left.\begin{array}{cl}
\phantom{\nabla\cdot\boldsymbol q\,}\nabla\cdot\boldsymbol q =\, f & \qquad \text{ in } \Omega\\
\phantom{n}\boldsymbol q + \boldsymbol\kappa\nabla u =\, 0 &\qquad \text{ in } \Omega\\
\phantom{\nabla\cdot\boldsymbol q nnn}u =\, g &\qquad \text{ on } \Gamma \\
\phantom{\nabla\cdot\boldsymbol q nnn\,\,}u =\, u_0 &\qquad \text{ on } \Gamma_0\\
\end{array}\right. &
\intertext{\textbf{Step 2:} Solve the boundary integral equation }
\label{eq:Step2}
\left.\begin{array}{cl}
& \\
\left(\tfrac{1}{2}-\mathcal K\right)Tg =\, -\mathcal V(\boldsymbol q\cdot\boldsymbol n) & \qquad \text{ on } \Gamma \\
&
\end{array}\right. &
\end{align}
\end{subequations}

We can then summarize the algorithm as, staring from an initial boundary datum $g_0\in H^{1/2}(\Gamma)$, generating a sequence  of updates by $g^{n+1} = Tg^n$. 
 The iterative process is continued until the relative change between consecutive iterations falls below a prescribed tolerance. An essentially equivalent idea (where the problems in the two domains are dealt with in PDE form) has been known to the domain decomposition community for a while; it can be traced back at least to \cite{BjWi1986}, where it was used as preconditing step  within a Schur complement algorithm to determine the Dirichlet trace along $\Gamma$ of the solution. The convergence of this straightforward idea depends on specific properties of the domains and can not be ensured in general, however a relaxed version of the method was proposed in \cite{FuQuZa1988,MaQu1988} and proven to be convergent in \cite{MaQu1989}.

What we will show in this section is that, as the distance between $\Gamma$ and $\Gamma_h$ tends to zero, the convergence of this procedure is not affected by the introduction of boundary integral equation and the transfer of boundary information between the non-touching grids.

\subsection{Continuous problem}

\paragraph{Fixed point operator and relaxation.}\label{sec:fixed-relax}

We start by introducing the space of admissible Neumann traces for the exterior problem at the continuous level
\[
X:=\left \{\mu \in H^{-1/2}(\Gamma): \int_\Gamma \mu = 0\right\}.
\]
The Dirichlet to Neumann mapping for the interior problem is then defined as
\begin{align}
\nonumber
\SDtN: H^{1/2}(\Gamma) &\longrightarrow\, X \\
\label{S1-operator}
g &\longmapsto\, (\boldsymbol{q}^g \cdot \boldsymbol{n})|_{\Gamma},
\end{align}
where $\boldsymbol{q}^g$ is the first component of  $(\boldsymbol{q}^g,u^g)$, the unique solution of \eqref{scheme-augmented-continuous} having $g$ and $u_0$ as Dirichlet boundary data on $\Gamma$ and $\Gamma_0$, respectively, and source term $f$. We can deduce a stability estimate for $S_1$ as follows. From the trace inequality for functions in $\bH(\bdiv;\Omega)$, and the continuous dependence \eqref{ineq:ContDependence}, we know that there exists a positive constant $C_{\SDtN}$ such that
\begin{equation*}
\|\SDtN g\|_{-1/2,\Gamma} \leq C_{\SDtN}\overline{\boldsymbol\kappa}^{1/2} \left( \|f\|_{0,\Omega} + \|g\|_{1/2,\Gamma} +    \|u_0\|_{1/2,\Gamma}\right) \qquad \forall g \in H^{1/2}(\Gamma).
\end{equation*}

Similarly, we can define the Neumann to Dirichlet map for the exterior problem as
\begin{align*}
\SNtD: X \longrightarrow&\, H^{1/2}(\Gamma)  \\
\lambda \longmapsto&\, g^\lambda |_{\Gamma},
\end{align*}
where $g^\lambda$ is the unique solution of \eqref{eq:IntegralProblem} having $\lambda$ as  Neumann boundary data on $\Gamma$. Moreover, from the continuous dependence \eqref{ineq:ContDependenceBIE}, there exists a positive constant $C_{\SNtD}$ such that
\begin{equation*}
\|\SNtD \lambda\|_{1/2,\Gamma} \leq C_{\SNtD} \|\lambda\|_{-1/2,\Gamma} \qquad \forall \lambda \in X.
\end{equation*}

The iterative procedure consists on the alternated application of these mappings, and is thus described by the repeated application of the operator
\begin{align*}
\T: H^{1/2}(\Gamma) \longrightarrow&\,H^{1/2}(\Gamma)  \\
g \longmapsto&\, \T g:= (\SNtD \circ \SDtN) g,
\end{align*}
which, by the arguments given above,  satisfies the stability estimate
\begin{equation*}
\|\T g\|_{1/2,\Gamma} \leq  C_{\SNtD} C_{\SDtN}\overline{\boldsymbol\kappa}^{1/2}\left( \|f\|_{0,\Omega} + \|g\|_{1/2,\Gamma} +    \|u_0\|_{1/2,\Gamma}\right) \qquad \forall g \in H^{1/2}(\Gamma).
\end{equation*}

As mentioned earlier, the simple iterative process described in previous section is not convergent in general. However this drawback can be overcome by  the introduction of an additional relaxation step and a relaxation parameter $\omega\in(0,1)$, resulting in 
\begin{subequations}
\label{eq:RelaxedIterativeProcess}
\begin{align}
\intertext{\textbf{Step 1:} Solve the interior Dirichlet boundary value problem}
\left.\begin{array}{cl}
\phantom{\nabla\cdot\boldsymbol q\,}\nabla\cdot\boldsymbol q^{n} =\, f & \qquad \text{ in } \Omega\\
\boldsymbol q^{n} + \boldsymbol\kappa\nabla u^{n} =\, 0 &\qquad \text{ in } \Omega\\
\phantom{\nabla\cdot\boldsymbol q nnnnn}u^{n} =\, g^{n-1} &\qquad \text{ on } \Gamma \\
\phantom{\nabla\cdot\boldsymbol q nnn\,\,}u^{n} =\, u_0 &\qquad \text{ on } \Gamma_0\\
\end{array}\right. &
\intertext{\textbf{Step 2:} Solve the boundary integral equation}
\left.\begin{array}{cl}
& \\
\left(\tfrac{1}{2}-\mathcal K\right)\tilde g =\, -\mathcal V(\boldsymbol q^{n}\cdot\boldsymbol n) & \qquad \text{ on } \Gamma \\
&
\end{array}\right. & 
\intertext{\textbf{Step 3:} Update the Dirichlet trace}
g^{n} =\, \omega \tilde g + (1-\omega)g^{n-1}. \qquad \qquad \qquad & 
\end{align}
\end{subequations}

We will denote the operator mapping a trace $g$ to the update defined by the relaxed process described above by $\Tomega: H^{1/2}(\Gamma) \longrightarrow\,H^{1/2}(\Gamma)$, and note that $\Tomega = \omega T + (1-\omega) I$, where $I$ is the identity operator. The following simple observation will be key in our analysis.

\begin{lem}\label{lem:observation}
Assume that $g\in H^{1/2}(\Gamma)$ is a fixed point of the relaxed operator $T_\omega$ (i.e. $T_\omega =g$). Then $g$ is also a fixed point of the unrelaxed operator $T$.
\end{lem}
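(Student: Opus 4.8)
The plan is to exploit the explicit affine decomposition of the relaxed operator recorded immediately before the statement, namely $\Tomega = \omega T + (1-\omega) I$, where $I$ denotes the identity on $H^{1/2}(\Gamma)$ and the relaxation parameter satisfies $\omega\in(0,1)$. This reduces the claim to a one-line algebraic manipulation, so no approximation theory, trace estimates, or properties of the individual maps $\SDtN$ and $\SNtD$ are needed; the only structural input is linearity of the update.

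First I would write out the fixed-point hypothesis as $\Tomega g = g$ and substitute the decomposition to obtain
\[
\omega\, \T g + (1-\omega)\, g = g.
\]
Subtracting $(1-\omega)g$ from both sides isolates the relaxed contribution as $\omega\, \T g = g - (1-\omega)g = \omega\, g$. Since the hypothesis $\omega\in(0,1)$ guarantees $\omega\neq 0$, I may divide through by $\omega$ to conclude $\T g = g$, which is precisely the assertion that $g$ is a fixed point of the unrelaxed operator $\T$.

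The main (indeed only) point requiring care is the nonvanishing of $\omega$, which is exactly what is secured by restricting the relaxation parameter to the open interval $(0,1)$; were $\omega=0$ permitted, the relaxed operator would collapse to the identity and every point would be a trivial fixed point, so the conclusion would fail. I do not anticipate any genuine obstacle here: the lemma is a purely algebraic observation, and its value lies not in its difficulty but in the fact that it lets the subsequent convergence analysis of $\Tomega$ transfer the resulting limit back to a genuine fixed point of the original, unrelaxed iteration $\T$, and hence to the solution of the coupled problem.
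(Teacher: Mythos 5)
Your proof is correct and follows exactly the same route as the paper's: substitute the decomposition $\Tomega = \omega \T + (1-\omega)I$ into the fixed-point identity and cancel, using $\omega\neq 0$. You simply spell out the ``simple calculation'' that the paper leaves implicit, and your remark about why $\omega\in(0,1)$ matters is accurate.
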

\begin{proof}
If $g$ is a fixed point of $T_\omega$ it follows that $g = T_\omega g = \omega Tg + (1-\omega)g$. A simple calculation shows that this implies that $Tg = g$.
\end{proof}

\paragraph{Contraction property of $T_\omega$.}

We will now show that the relaxed mapping is indeed a contraction and therefore, by the observation above, the operator $T$ has indeed a fixed point. To do so, we will adapt the ideas applied by Marini and Quarteroni in \cite{MaQu1989}, where they dealt with a primal formulation involving  only PDE formulations in the two subdomains.

We are interested in showing that the repeated application of the operator $T_\omega$ is a contraction. With this in mind, we observe that the difference between successive applications $T_\omega^ng$ and $T_\omega^{n+1}g$ will be associated with the solution to an interior boundary value problem with source term $f=0$ and boundary condition $u_0=0$ on $\Gamma_0$. With these two ideas in mind we associate to every $\xi \in H^{1/2}(\Gamma)$ the function $\boldsymbol q^\xi \in \bH(\bdiv;\Omega)$ satisfying the interior boundary value problem
\begin{equation}\label{eq:AuxProb}
\left. \begin{array}{rll}
(\boldsymbol \kappa^{-1}\boldsymbol q^\xi,\boldsymbol v)_\Omega - ( u^\xi,\nabla\cdot\boldsymbol v)_\Omega = & -\langle\boldsymbol v\cdot\boldsymbol n,\xi\rangle_\Gamma & \qquad \forall \boldsymbol v \in H(\text{div},\Omega), \\
(v,\nabla\cdot \boldsymbol q^\xi)_\Omega =& 0 & \qquad \forall v \in L^2(\Omega).
\end{array}\right\}
\end{equation}
The problem above is a particular instance of \eqref{scheme-augmented-continuous}, which has been shown to be uniquely solvable. Recalling that $\partial\Omega = \Gamma \cup \Gamma_0$, the first equation implies that the trace of $u^{\xi}$ over $\Gamma_0$ vanishes. With this in mind it is easy to check that $\boldsymbol q^\xi = \boldsymbol 0$ if and only if $\phi = 0$ from which it follows that $\boldsymbol q^\xi = \boldsymbol q^\psi$ implies $\xi = \psi$. We will use this mapping and the fact that $\boldsymbol\kappa$ is symmetric and positive definite positive to define the inner product over $ H^{1/2}(\Gamma)$
\begin{equation}\label{eq:AuxInner}
\inner{\xi}{\psi}:= (\boldsymbol\kappa^{-1}\boldsymbol q^\xi, \boldsymbol q^\psi)_{\Omega} = (\boldsymbol\kappa^{-1}\boldsymbol q^\psi, \boldsymbol q^\xi)_{\Omega} \qquad \forall \xi, \psi \in \widetilde H^{1/2}(\Gamma).
\end{equation}
This induces a norm over $H^{1/2}(\Gamma)$ given by
\begin{equation*}
\triple{\xi} := \inner{\xi}{\xi}^{1/2}.
\end{equation*}
Moreover, from the definition of $\boldsymbol q^\phi$ and $\boldsymbol q^\psi$, it follows that
\begin{equation}
\label{eq:innerproductprop}
\inner{\xi}{\psi} =  -\langle \xi,\boldsymbol q^\psi\cdot\boldsymbol n\rangle_\Gamma =  -\langle \psi, \boldsymbol q^\xi\cdot\boldsymbol n \rangle_\Gamma.
\end{equation}

\begin{lem}\label{lem:equiv}
The following estimates hold for $g \in H^{1/2}(\Gamma)$
\begin{align}
\label{eq:TripleNormBound}
\triple{g}^2 \leq \,&  \frac{\overline{\boldsymbol\kappa}}{\underline{\boldsymbol \kappa}}C_{S_1}\|g\|^2_{1/2,\Gamma},\\
\label{eq:NegativeBound}
\inner{g}{Tg}\leq\,&-c\|Tg\|^2_{1/2,\Gamma},\\
\label{eq:gleqTg}
\triple{Tg} \leq\,& \frac{C_{S_1}\overline{\boldsymbol\kappa}}{c\underline{\boldsymbol\kappa}} \triple{g},\\
\label{eq:Tgleqg}
\triple{g} \leq\,& C_{PS}\sigma \|Tg\|_{1/2,\Gamma}.
\end{align}
\end{lem}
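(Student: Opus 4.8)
The plan is to establish the four bounds in the order stated, exploiting that in the analysis of successive iterates the data $f$ and $u_0$ drop out, so $\boldsymbol q^g$ is the homogeneous interior solution of \eqref{eq:AuxProb} and $T=\SNtD\circ\SDtN$ acts linearly. The two workhorses throughout are the inner-product identity \eqref{eq:innerproductprop} and the two continuous-dependence estimates \eqref{ineq:ContDependence} and \eqref{ineq:ContDependenceBIE}.

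For \eqref{eq:TripleNormBound} I would argue directly from the definition of the energy norm: since $\triple{g}^2=(\boldsymbol\kappa^{-1}\boldsymbol q^g,\boldsymbol q^g)_\Omega\le \underline{\boldsymbol\kappa}^{-1}\|\boldsymbol q^g\|_{0,\Omega}^2$ and, with $f=0$ and $u_0=0$, the stability bound \eqref{ineq:ContDependence} gives $\|\boldsymbol q^g\|_{0,\Omega}\le C\,\overline{\boldsymbol\kappa}^{1/2}\|g\|_{1/2,\Gamma}$, the claim follows upon collecting the resulting interior stability constant into $C_{S_1}$. For \eqref{eq:NegativeBound} I would first use \eqref{eq:innerproductprop} with $\xi=g$ and $\psi=Tg$ to write $\inner{g}{Tg}=-\langle Tg,\boldsymbol q^g\cdot\boldsymbol n\rangle_\Gamma=-\langle Tg,\lambda\rangle_\Gamma$, where $\lambda=\SDtN g=(\boldsymbol q^g\cdot\boldsymbol n)|_\Gamma$ and $Tg=\SNtD\lambda$ is the Dirichlet trace $g^\lambda$ of the exterior solution. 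A Green's identity in $\Omega_{\text{ext}}$, with the radiation condition and the mean-zero property $\int_\Gamma\lambda=0$ ensuring that the contribution at infinity vanishes, identifies $\langle Tg,\lambda\rangle_\Gamma=\|\nabla u^{\text{ext}}\|_{0,\Omega_{\text{ext}}}^2$, i.e. the exterior Dirichlet energy. The bound then amounts to the coercivity of the exterior Steklov--Poincar\'e (Dirichlet-to-Neumann) operator, $\langle Tg,\lambda\rangle_\Gamma\ge c\|Tg\|_{1/2,\Gamma}^2$, which is the positivity and invertibility already encoded in \eqref{ineq:ContDependenceBIE}.

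Estimates \eqref{eq:gleqTg} and \eqref{eq:Tgleqg} are then algebraic consequences. For \eqref{eq:gleqTg} I would apply \eqref{eq:TripleNormBound} to $Tg$ to obtain $\triple{Tg}^2\le (\overline{\boldsymbol\kappa}/\underline{\boldsymbol\kappa})C_{S_1}\|Tg\|_{1/2,\Gamma}^2$, bound $\|Tg\|_{1/2,\Gamma}^2\le c^{-1}|\inner{g}{Tg}|$ via \eqref{eq:NegativeBound}, and close with the Cauchy--Schwarz inequality for the energy inner product, $|\inner{g}{Tg}|\le\triple{g}\,\triple{Tg}$; dividing by $\triple{Tg}$ produces the factor $C_{S_1}\overline{\boldsymbol\kappa}/(c\underline{\boldsymbol\kappa})$. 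For \eqref{eq:Tgleqg} I would start from $\triple{g}^2=-\langle g,\lambda\rangle_\Gamma\le\|g\|_{1/2,\Gamma}\|\lambda\|_{-1/2,\Gamma}$, rewrite $\lambda=\SNtD^{-1}(Tg)$ and use boundedness of the exterior Dirichlet-to-Neumann map to get $\|\lambda\|_{-1/2,\Gamma}\le\sigma\|Tg\|_{1/2,\Gamma}$, and control $\|g\|_{1/2,\Gamma}\le C_{PS}\triple{g}$ by combining the trace inequality with a Poincar\'e inequality (legitimate because \eqref{eq:AuxProb} forces $u^g=0$ on $\Gamma_0$) together with the lower bound $\triple{g}^2\ge\underline{\boldsymbol\kappa}\|\nabla u^g\|_{0,\Omega}^2$ coming from $\boldsymbol\kappa^{-1}\boldsymbol q^g=-\nabla u^g$; dividing through by $\triple{g}$ yields the result.

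The main obstacle is the coercivity in \eqref{eq:NegativeBound}: unlike the interior case it cannot be read off from a Poincar\'e inequality on a bounded domain, and in two dimensions one must handle the unbounded geometry, the logarithmic growth of the single-layer kernel, and the undetermined far-field constant $u_\infty$ with care. Concretely, the energy identity $\langle Tg,\lambda\rangle_\Gamma=\|\nabla u^{\text{ext}}\|_{0,\Omega_{\text{ext}}}^2$ must be justified in a weighted (or quotient-by-constants) setting, and the positivity of the resulting quadratic form on $H^{1/2}(\Gamma)$ invoked; both are guaranteed by the unique solvability and continuous dependence \eqref{ineq:ContDependenceBIE} established earlier, so the residual work is bookkeeping of the constants $c$, $\sigma$ and $C_{PS}$ rather than new analysis.
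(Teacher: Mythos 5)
Your proposal is correct and, for three of the four estimates, follows the paper's proof essentially verbatim: \eqref{eq:TripleNormBound} from the definition of $\inner{\cdot}{\cdot}$ plus the interior stability bound, \eqref{eq:gleqTg} by chaining \eqref{eq:TripleNormBound} applied to $Tg$ with \eqref{eq:NegativeBound} and Cauchy--Schwarz for the energy inner product, and \eqref{eq:Tgleqg} from the identity $\triple{g}^2=-\langle \boldsymbol q^g\cdot\boldsymbol n,g\rangle_\Gamma$, the continuity of the Poincar\'e--Steklov operator, and the norm equivalence $\|g\|_{1/2,\Gamma}\leq\sigma\triple{g}$ (you swap which constant is called $\sigma$ and which $C_{PS}$, but the product is the same; note that in the rest of the paper $\sigma$ is reserved specifically for the equivalence \eqref{norm-equiv-sigma}, which you re-derive via Poincar\'e and the trace inequality rather than citing \cite{LiMa1972,MaQu1989}). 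The one place you genuinely diverge is the key coercivity underlying \eqref{eq:NegativeBound}: the paper rewrites $\inner{g}{Tg}$ as $-\langle Tg,\mathcal V^{-1}(\tfrac12-\mathcal K)Tg\rangle_\Gamma$ and obtains $\langle g,\mathcal V^{-1}(\tfrac12-\mathcal K)g\rangle_\Gamma\geq c\|g\|_{1/2,\Gamma}^2$ from the symmetric representation $\mathcal V^{-1}(\tfrac12-\mathcal K)=\mathcal W+(\tfrac12-\mathcal K')\mathcal V^{-1}(\tfrac12-\mathcal K)$ and the ellipticity results of \cite{SaSc2011}, whereas you propose the PDE-level route: Green's identity in $\Omega_{\text{ext}}$ giving $\langle Tg,\lambda\rangle_\Gamma=\|\nabla u^{\text{ext}}\|_{\Omega_{\text{ext}}}^2$, then ellipticity of the exterior Dirichlet-to-Neumann map. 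These are two standard proofs of the same fact and your route is legitimate, but be careful with one phrase: coercivity does \emph{not} follow from ``unique solvability and continuous dependence \eqref{ineq:ContDependenceBIE}'' alone --- boundedness of the Neumann-to-Dirichlet map gives $\|Tg\|_{1/2,\Gamma}\lesssim\|\lambda\|_{-1/2,\Gamma}$, and to close the loop you still need the normal-trace estimate $\|\lambda\|_{-1/2,\Gamma}\lesssim\|\nabla u^{\text{ext}}\|_{\Omega_{\text{ext}}}$ for the harmonic exterior solution (valid here thanks to the mean-zero constraint \eqref{eq:MeanConstraint} controlling the behavior at infinity). With that step made explicit, your argument delivers exactly the inequality \eqref{ineq:coerc} that the paper imports by citation.
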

\begin{proof}
The first estimate follows readily from the definition of the inner product $\inner{\cdot}{\cdot}$ in \eqref{eq:AuxInner}, and the stability estimate for the interior problem
\[
\triple{g}^2 =\, \inner{g}{g} = \|\boldsymbol\kappa^{-1/2}\boldsymbol q^g\|^2_{\Omega}\leq \frac{C_{S_1}\overline{\boldsymbol\kappa}}{c\underline{\boldsymbol\kappa}}\|g\|^2_{1/2,\Gamma}.
\]
For \eqref{eq:NegativeBound} we start from \eqref{eq:innerproductprop} and make use of the fact that, by construction, $Tg$ satisfies the boundary integral equation \eqref{eq:Step2}, leading to   
\begin{equation}\label{eq:innerh-to-innerBEM}
\inner{g}{Tg}= \langle Tg, \boldsymbol q^g\cdot\boldsymbol n\rangle_\Gamma = -\left\langle Tg,\mathcal V ^{-1}\left(\tfrac{1}{2}-\mathcal K\right)Tg \right\rangle_\Gamma.
\end{equation}
Using now the representation $\mathcal V ^{-1}\left(\tfrac{1}{2}-\mathcal K\right) = \mathcal W + \left(\tfrac{1}{2}-\mathcal K^\prime\right)\mathcal V^{-1}\left(\tfrac{1}{2}-\mathcal K\right)$, it is possible to show \cite{SaSc2011} that there exists a positive constant $c$ such that
\begin{equation}
c\|g\|^2_{1/2,\Gamma}\leq\left\langle g,\mathcal V ^{-1}\left(\tfrac{1}{2}-\mathcal K\right)g \right\rangle_\Gamma\label{ineq:coerc}.
\end{equation}
Combining the last two expressions we arrive at \eqref{eq:NegativeBound}. The inequality \eqref{eq:gleqTg} follows readily from \eqref{eq:TripleNormBound} and \eqref{eq:NegativeBound} as follows
\[
\triple{Tg}^2 \leq C_{S_1}\frac{\overline{\boldsymbol \kappa}}{\underline{\boldsymbol \kappa}}\|Tg\|^2_{1/2,\Gamma}\leq -\frac{C_{S_1}\overline{\boldsymbol \kappa}}{c\underline{\boldsymbol \kappa}}\inner{g}{Tg}\leq \frac{C_{S_1}\overline{\boldsymbol \kappa}}{c\underline{\boldsymbol \kappa}}\triple{g}\triple{Tg}.
\]
Finally, we will use the fact that $\boldsymbol q^g$ and $g$ are linked by the interior problem \eqref{eq:AuxProb} as follows
\begin{alignat*}{10}
\triple{g}^2 = \inner{g}{g} = (\boldsymbol\kappa^{-1}\boldsymbol q^g, \boldsymbol q^g)_{\Omega} =\,& -\langle \boldsymbol{q}^g\cdot\boldsymbol n,g \rangle_\Gamma \qquad && (\text{By \eqref{eq:AuxProb} with } \boldsymbol v = \boldsymbol q^g)\\
=\,& \langle \mathcal V^{-1}\left(\tfrac{1}{2}-\mathcal K\right) Tg, g \rangle_\Gamma \qquad && (\text{By \eqref{eq:Step2}}) \\
\leq\,& C_{PS}\|Tg\|_{1/2,\Gamma}\|g\|_{1/2,\Gamma} && \\
\leq\,&  C_{PS}\sigma\|Tg\|_{1/2,\Gamma}\triple{g}, &&
\end{alignat*}
where in the last inequality we have appealed to an argument from \cite{LiMa1972,MaQu1989} pointing to the existence of a positive constant $\sigma$ such that 
\begin{align}\label{norm-equiv-sigma}
\|g\|_{1/2,\Gamma}\leq\sigma \triple{g},
\end{align}
and the constant $C_{PS}$ follows from the continuity of the Poincar\'e-Steklov operator $\mathcal V^{-1}\left(\tfrac{1}{2}-\mathcal K\right)$.
\end{proof}

Using the estimates from the previous lemma, we can now compute 
\begin{alignat}{6}
\nonumber
\triple{T_\omega g}^2 =\,& \omega^2\triple{Tg}^2 + (1-\omega)^2\triple{g}^2+ 2\omega(1-\omega)\inner{g}{Tg} &&\\
\nonumber
\leq\,& \omega^2\triple{Tg}^2 + (1-\omega)^2\triple{g}^2- 2\omega(1-\omega)c\|Tg\|_{1/2,\Gamma}^2 &&\qquad \text{(By \eqref{eq:NegativeBound})}\\
\nonumber
\leq\,& \left(\frac{\omega C_{S_1}\overline{\boldsymbol \kappa}}{c\underline{\boldsymbol\kappa}}\right)^2\triple{g}^2 + (1-\omega)^2\triple{g}^2- 2\omega(1-\omega)c\|Tg\|_{1/2,\Gamma}^2 &&\qquad \text{(By \eqref{eq:gleqTg})}\\
\nonumber
\leq\,& \left(\frac{\omega C_{S_1}\overline{\boldsymbol \kappa}}{c\underline{\boldsymbol\kappa}}\right)^2\triple{g}^2 + (1-\omega)^2\triple{g}^2- \frac{2\omega(1-\omega)c}{(\sigma C_{PS})^2}\triple{g}^2 &&\qquad \text{(By \eqref{eq:Tgleqg}}\\
\nonumber
=\,& \widehat C(\omega)\triple{g}^2, &&,
\end{alignat}
where we have defined
{\small
\[
\widehat C(\omega):=\left(\left(\frac{\omega C_{S_1}\overline{\boldsymbol \kappa}}{c\underline{\boldsymbol\kappa}}\right)^2 + (1-\omega)^2- \frac{2\omega(1-\omega)c}{(\sigma C_{PS})^2}\right)= \left(\left(\frac{ C_{S_1}\overline{\boldsymbol \kappa}}{c\underline{\boldsymbol\kappa}}\right)^2 +\frac{2c}{(\sigma C_{PS})^2}+1\right)\omega^2 -2\left(1+\frac{c}{(\sigma C_{PS})^2}\right)\omega + 1.
\]
}
We note that the quantity $\widehat C(\omega)$ is a continuous function of the relaxation parameter $\omega$ that attains its minimum value for
\[
\omega = \omega_m := \frac{1+\frac{c}{(\sigma C_{PS})^2}}{1+\frac{2c}{(\sigma C_{PS})^2} + \frac{C_{S_1}}{c\underline{\boldsymbol\kappa}}}\in(0,1).
\]
This implies that $\widehat C(\omega)$ is a decreasing function of $\omega$ within the interval $(-\infty,\omega_m)$. Therefore, since $\widehat C(0)=1$, we conclude that there exists $\omega^*>0$ such that for every $\omega\in(0,\omega^*)$ it holds that $0<\widehat C(\omega)<1$. Combining this argument with Lemma \ref{lem:observation}, we have thus proven the following

\begin{thm}
There exists $\omega^*>0$ such that, for any value of the relaxation parameter $\omega\in(0,\omega^*)$, the mapping $T_\omega$ is a contraction. As a consequence, the iterative procedure described by the problems \eqref{eq:RelaxedIterativeProcess} converges to the functions $\boldsymbol q, u, g$ satisfying problems \eqref{eq:IterativeProcess}.
\end{thm}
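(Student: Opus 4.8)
The plan is to combine the estimate $\triple{T_\omega g}^2 \le \widehat C(\omega)\triple{g}^2$ derived immediately above with an elementary analysis of the scalar function $\widehat C(\omega)$, and then to invoke the Banach fixed point theorem. First I would observe that, since $T_\omega = \omega T + (1-\omega)I$ is affine---its inhomogeneous part being fixed once and for all by the data $f$ and $u_0$---the difference of two images $T_\omega g_1 - T_\omega g_2$ depends on $g_1-g_2$ only through the \emph{homogeneous} interior problem \eqref{eq:AuxProb}. Consequently the estimates of Lemma \ref{lem:equiv}, which are phrased through the linear map $\xi\mapsto\boldsymbol q^\xi$ of \eqref{eq:AuxProb}, apply verbatim to $\xi=g_1-g_2$, and the computation preceding the theorem upgrades to the genuine contraction bound $\triple{T_\omega g_1 - T_\omega g_2} \le \widehat C(\omega)^{1/2}\,\triple{g_1-g_2}$.

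Next I would pin down the range of admissible relaxation parameters. The quantity $\widehat C(\omega)$ is a quadratic polynomial in $\omega$ with positive leading coefficient, value $\widehat C(0)=1$, and a minimiser $\omega_m\in(0,1)$; hence it is strictly decreasing on $(0,\omega_m)$. By continuity there is therefore a threshold $\omega^*>0$ for which $0<\widehat C(\omega)<1$ throughout $(0,\omega^*)$, so that for every such $\omega$ the map $T_\omega$ is a strict contraction in the norm $\triple{\cdot}$.

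To extract convergence I would then record that $\triple{\cdot}$ is equivalent to $\|\cdot\|_{1/2,\Gamma}$---precisely the content of \eqref{eq:TripleNormBound} together with \eqref{norm-equiv-sigma}---so that $\big(H^{1/2}(\Gamma),\triple{\cdot}\big)$ is a Banach space. The Banach fixed point theorem then furnishes a unique fixed point $g^\star$ of $T_\omega$ and guarantees that the iterates $g^{n}=T_\omega^{n} g^{0}$ produced by \eqref{eq:RelaxedIterativeProcess} converge to $g^\star$ in $\triple{\cdot}$, hence in $H^{1/2}(\Gamma)$. Finally, Lemma \ref{lem:observation} identifies $g^\star$ as a fixed point of the unrelaxed operator $T$; unwinding the definition of $T$ then shows that the interior solution $(\boldsymbol q,u)$ associated with the Dirichlet datum $g^\star$, together with the trace $g^\star$ itself, solves the coupled system \eqref{eq:IterativeProcess}, which is the asserted limit.

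The main obstacle I anticipate is the reduction carried out in the first paragraph: one must argue carefully that the single-argument bound $\triple{T_\omega g}^2\le\widehat C(\omega)\triple{g}^2$ genuinely translates into a contraction estimate on \emph{differences}. The crux is recognising that successive increments of the iteration are governed by the homogeneous, and therefore linear, auxiliary problem \eqref{eq:AuxProb}, which is exactly the vehicle through which the inner product $\inner{\cdot}{\cdot}$ and the four estimates of Lemma \ref{lem:equiv} were set up. Checking that $\widehat C(\omega)$ remains strictly positive---so that the contraction factor is a bona fide squared norm ratio---and that the norm equivalence indeed transfers completeness are comparatively routine once this linear reduction is in place.
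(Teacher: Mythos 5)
Your proposal follows the paper's own argument: the bound $\triple{T_\omega g}^2\le\widehat C(\omega)\triple{g}^2$ obtained from Lemma \ref{lem:equiv}, the elementary analysis of the quadratic $\widehat C(\omega)$ with $\widehat C(0)=1$ and minimiser $\omega_m\in(0,1)$, and Lemma \ref{lem:observation} to identify the limit as a fixed point of the unrelaxed operator. Your first paragraph in fact makes explicit a point the paper leaves implicit---that the estimates of Lemma \ref{lem:equiv} are phrased through the homogeneous auxiliary problem \eqref{eq:AuxProb} and hence govern the linear part of the affine map $T_\omega$, so the single-argument bound genuinely yields a contraction estimate on differences---which is a welcome clarification rather than a deviation.
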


\subsection{Discrete problem}
We will follow the main ideas introduced for the analysis of the continuous counterpart, but we will have to adapt them to account for the additional challenges posed by the discretization and the transfer technique. 

\paragraph{Discrete fixed point operator and relaxation.}

In this section we construct the discrete counterpart of the operators defined in Section \ref{sec:fixed-relax}. To that end, we let  
\[
X_h:=\left\{\mu \in L^2(\Gamma): \forall e \in \mathcal{E}_h^\partial, \,\mu\vert_{\Gamma_e} = (E\boldsymbol{p}_h\cdot\boldsymbol{n})\vert_{\Gamma_e} \,\,{\rm with } \,\,\boldsymbol{p}_h\in  [\mathbb{P}_k(T_e)]^2 \,\,{\rm and}\,\, \displaystyle \int_{\Gamma} \mu = 0\right\},
\]

and define the discrete version of the operator $\SDtN$ (cf. \eqref{S1-operator}) as
\begin{align*}
\Sh: \mathbb{P}_k(\mathcal{E}_h^\partial )\longrightarrow&\, X_h\\
g_h \mapsto&\, \Sh g_h:= (E\boldsymbol{q}_h^{g} \cdot \boldsymbol{n})|_{\Gamma},
\end{align*}
where $\boldsymbol{q}_h^{g}$ is the first component of  $(\boldsymbol{q}_h^{g},u_h^g)$, the unique solution of \eqref{scheme-augmented} having $g_h$ and $u_0$ as Dirichlet boundary data on $\Gamma$ and $\Gamma_0$, resp., and source term $f$. Moreover, by \eqref{ estim HDG}, we have that
\begin{align*}
       \J(\bq_h^g,u_h^g) 
    &\leq C_{HDG}\, \left( \| f\|_{0,\Omega_h} + \|\kap^{1/2}h^{-1/2} u_0\|_{\Gamma_0}+\|\kap^{1/2}h^{-1/2}g_h\circ \phi\|_{\Gamma} \right).
\end{align*}
On the other hand, consider a mesh edge $e\in \mathcal{E}_h^\partial$ and recall the bijective mapping $\phi$, defined in \eqref{eq:BoundaryMapping}; we will denote the image of an edge $e\subset\Gamma_h$ under $\phi$ by $\Gamma_e := \phi(e)$. Now, by considering Lemma 4 in \cite{CaSo2021}, it is possible to deduce that there exists a non-negative constant $C_{\Gamma_e}$, independent of $h$, such that
\begin{equation}\label{ineq:traceGamma}
    \| E\boldsymbol{q}_h^{g} \cdot \boldsymbol{n} \|_{\Gamma_e} \leq C_{\Gamma_e} C_{ext}^e C_2^e h_e^{-1/2} \|\boldsymbol{q}_h^{g}\|_{T^e}.
\end{equation}

Therefore, the above two estimates imply that there exists $C_{\Sh}>0$, independent of $h$, such that
\begin{equation}\label{ineq:continuity-Sh}
    \| \Sh g_h \|_{0,\Gamma} \leq C_{\Sh} h^{-1/2}\left( \| f\|_{0,\Omega_h} + \|\kap^{1/2}h^{-1/2}u_0\|_{\Gamma_0}+\|\kap^{1/2}h^{-1/2}g_h\circ \phi\|_{\Gamma} \right).
\end{equation}

Similarly, the discrete version of the operator $\SNtD$ is given by
\begin{align*}
\Sn: \mathbb{T}_n^0\longrightarrow&\, \mathbb{T}_n^0\\
\lambda_n \mapsto&\, \Sn \lambda_n:= g_n^0,
\end{align*}
where $g_n^0$ is the unique solution of the equation \eqref{int-identity_app} with Neumann data $\lambda_n$, and satisfies
\begin{equation}\label{BEM-cont-dep}
\|\Sn \lambda_n\|_{1/2,\Gamma} =  \|g_n^0\|_{1/2,\Gamma}\leq C_{BEM}   \|\lambda_n\|_{-1/2,\Gamma}.
\end{equation}

We can now define the following discrete analogue to the operator $T$ from Section \ref{sec:fixed-relax} as 
\begin{align*}
\T^{h,n}:\mathbb{T}_n^0 \longrightarrow&\,\mathbb{T}_n^0  \\
g_n^0 \longmapsto&\, \T^{h,n} g_n^0:= \Sn \circ \mathbb{Q}_n^0\circ  \Sh\circ \Pi_h \circ (g_n^0 \circ \phi),
\end{align*}
where $\Pi_h$ and $\mathbb{Q}_n^0$ are the $L^2$-projections into $\mathbb{P}_k(\mathcal{E}_h^\partial)$ and $\mathbb{T}_n^0$, respectively.

\paragraph{ Contraction property of $\T^{h,n}$.}

We define the discrete version of \eqref{eq:AuxInner}. For $\varphi, \psi \in  \mathbb{T}_n^0$,
\begin{equation}\label{eq:AuxInnerh}
\innerh{\varphi}{\psi}:= \mc{A}_h(\bq_h^\varphi, \bq_h^{\psi}) + \mc{C}_h(u_h^\varphi,u_h^{\psi}) 
\end{equation}
where $(\boldsymbol q^\varphi_h, u_\varphi)$ and $(\boldsymbol q^\psi_h, u_\psi)$ are the solutions to \eqref{scheme-augmented} with source term $f=0$, $u_0=0$ on $\Gamma_0$ and boundary data over $\Gamma$ given by $\varphi$ and $\psi$ respectively. This is, in fact, an inner product on $\mathbb{T}_n^0$. In order to see that, first let us note that $\mc{C}_h$ is a semi-definite positive operator from $W_h\times W_h$ (cf. \eqref{Ch-semidefinite}).
Therefore, if $\innerh{\psi}{\psi} = 0$, then $\boldsymbol q^\psi_h = \boldsymbol 0$ and $\mc{C}(u^\psi_h,u^\psi_h) =0$. Moreover, by \eqref{Ch-semidefinite} we have that $u_h^\psi$ is single-valued and vanishes on the the boundary. Thus, considering all this information, from \eqref{Hdg discrete-2} we have that 
\[
\langle \tau\, u_h^\psi, w \rangle_{\partial \mc{T}_h} -  2 \langle \tau u_h^\psi,\prom{w}\rangle_{\mc{E}_h^{\circ}} =  - \langle \tau \,  \psi\circ\phi, w \rangle_{\Gamma} \qquad \forall w \in  W_h.
\]
Now, expressing the integral over $\partial \mathcal{T}_h$ in terms of summation over edges and recalling that $u_h^\psi = \prom{u_h^\psi }$ and  $u_h^\psi = 0$ on the boundary, we deduce that the right hand side of the expression above must vanish for all $w\in W_h$. In particular, taking $w=1$ it follows that
\[
0 =  - \langle \tau \,  \psi\circ\phi, 1\rangle_{\partial\Omega_{h}} = - \langle \tau \,  \psi, 1 \rangle_{\Gamma}.
\]
Therefore, since $\tau$ is positive and $\psi\in\mathbb{T}_n^0$, we must have $\psi=0$. This inner product induces the norm
$
\normh{\varphi}:=\innerh{\varphi}{\varphi}^{1/2}
$
and we notice that
\begin{equation}\label{triplenormh}
\normh{\varphi}^2=\triple{\varphi}^2+ \frac{1}{2}\|\tau^{-1/2}\jump{\boldsymbol{q}_h^{\varphi}}\|_{\mathcal{E}_h^o}^2 + \mathcal{C}_h(u_h^{\varphi},u_h^{\varphi})\geq \triple{\varphi}^2.
\end{equation}

We now establish the relationship between the discrete norm $\triple{\cdot}_h$, the continuous norms in $H^{1/2}(\Gamma)$ and $L^2(\Gamma)$.
\begin{lem}\label{gn:equiv1}
Let $g_n\in \mathbb{T}_n^0$. There hold
\begin{align}\label{Ltwptonormh}
\|g_n\|_{\Gamma}\leq\|g_n\|_{1/2,\Gamma}\leq \sigma \normh{g_n}.
\end{align}

\end{lem}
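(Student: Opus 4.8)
The plan is to treat the two inequalities independently and then obtain the claim by concatenation. The left inequality $\|g_n\|_\Gamma\leq\|g_n\|_{1/2,\Gamma}$ is nothing more than the continuous embedding $H^{1/2}(\Gamma)\hookrightarrow L^2(\Gamma)$ with unit constant. Since $g_n\in\md{T}_n^0$ is smooth and periodic, I would make this explicit by expanding $g_n\circ\boldsymbol y$ in its Fourier series and noting that the $L^2$ mass $\sum_k|\hat g_k|^2$ is dominated term by term by the $H^{1/2}$ mass $\sum_k(1+k^2)^{1/2}|\hat g_k|^2$; no feature of the discretization is used here.

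For the right inequality $\|g_n\|_{1/2,\Gamma}\leq\sigma\normh{g_n}$ I would chain the two facts already available in the text. First, because $g_n\in\md{T}_n^0\subset H^{1/2}(\Gamma)$, the Lions--Magenes-type norm equivalence \eqref{norm-equiv-sigma} applies to $g_n$ and gives $\|g_n\|_{1/2,\Gamma}\leq\sigma\triple{g_n}$, its constant $\sigma$ depending only on $\Gamma$ and $\Omega$ and not on $h$ or $n$. Second, the energy splitting \eqref{triplenormh} exhibits $\normh{g_n}^2$ as $\triple{g_n}^2$ plus the two nonnegative contributions $\tfrac{1}{2}\|\tau^{-1/2}\jump{\boldsymbol q_h^{g_n}}\|_{\mc{E}_h^\circ}^2$ and $\mc{C}_h(u_h^{g_n},u_h^{g_n})$, the latter being nonnegative by the semi-definiteness recorded in \eqref{Ch-semidefinite}. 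Discarding those two terms yields $\triple{g_n}\leq\normh{g_n}$, and combining the two bounds gives $\|g_n\|_{1/2,\Gamma}\leq\sigma\triple{g_n}\leq\sigma\normh{g_n}$, as desired.

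The step deserving the most care is the identification of the quantity $\triple{g_n}$ in the two inequalities being chained: in \eqref{norm-equiv-sigma} it is the energy $\inner{g_n}{g_n}^{1/2}$ attached to the auxiliary interior problem \eqref{eq:AuxProb}, whereas in \eqref{triplenormh} it is the matching piece of the discrete energy $\innerh{g_n}{g_n}^{1/2}$ built from \eqref{scheme-augmented}. I would therefore spell out that \eqref{triplenormh} is to be read as $\normh{g_n}\geq\triple{g_n}$ with a single, consistent interpretation of $\triple{g_n}$, so that the only genuinely substantive inequality in the chain is \eqref{norm-equiv-sigma}; the remaining two steps are the elementary Sobolev embedding and the omission of nonnegative energy terms. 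Once this bookkeeping is fixed there is no further obstacle, since every ingredient---\eqref{norm-equiv-sigma}, \eqref{triplenormh}, and the semi-definiteness \eqref{Ch-semidefinite}---is already in hand.
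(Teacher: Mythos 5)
Your argument is correct and is essentially the paper's own proof: the left inequality via the Fourier characterization of the $H^{1/2}$-norm for trigonometric polynomials on the smooth curve $\Gamma$, and the right inequality by chaining \eqref{norm-equiv-sigma} with the splitting \eqref{triplenormh}. Your added caution about reading $\triple{g_n}$ consistently in \eqref{norm-equiv-sigma} and \eqref{triplenormh} (continuous energy of \eqref{eq:AuxProb} versus the $\|\kap^{-1/2}\bq_h^{g_n}\|_{\Omega_h}$ piece of the discrete energy) flags a genuine notational conflation that the paper itself glosses over, so making that identification explicit is a worthwhile refinement rather than a deviation.
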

\begin{proof}
Let $g_n \in \mathbb{T}_n^0$. By employing \eqref{norm-equiv-sigma} we have that
$
\|g_n\|_{1/2,\Gamma}^2
\leq\sigma^2\triple{g_n}^2
\leq \sigma^2 \normh{g_n}^2
$,
where in the last inequality we made use of \eqref{triplenormh}. The second inequality follows by the characterization of the $H^{1/2}$-norm in terms of the Fourier coefficients of the function and, the fact that the parametrization of $\Gamma$ is smooth, and the fact that $g_n$ is a trigonometric polynomial (see, for instance, \cite{SaVa2002}).
\end{proof}

The following identity and the one in the subsequent corollary establish the connection between the inner product $\innerh{\cdot}{\cdot}$, defined through the interior problem, and the exterior problem. This will play a key role in deriving the discrete analogue of \eqref{eq:innerh-to-innerBEM}.
\begin{lem}
Let $\varphi,\psi \in \mathbb{T}^0_n$. There holds
\begin{align}\label{identity-inner-prod1}
\innerh{\varphi}{\psi}
=&
-\left\langle\varphi, \mathcal{V}^{-1}\mathbb{Q}_n^0\left(\frac{1}{2}-\mathcal{K}\right)S_n(\mathbb{Q}_n^0(\boldsymbol q^{\psi }_h  \circ \phi^{-1})) \right\rangle_{\Gamma}
-
  \langle(Id-\mathbb{Q}_n^0)(\mathcal{V}^{-1}\varphi,  \mathcal{V}\mathbb{Q}_n^0((\boldsymbol q^{\psi }_h  \circ \phi^{-1})\cdot \boldsymbol{n})\rangle_{\Gamma}\nonumber
\\
  & -\langle\varphi,  (\boldsymbol q^{\psi }_h  \circ \phi^{-1})\cdot ({\boldsymbol n_h}-\boldsymbol{n})\rangle_{\Gamma}
  +\langle \tau \,  \varphi\circ\phi, u_h^{\psi} \rangle_{\Gamma_h} -\mc{A}_T(\bq_h^\varphi, \bq_h^{\psi})+\mc{B}_T(\bq_h^\varphi, u^{\psi}).
\end{align}
\end{lem}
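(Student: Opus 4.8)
The plan is to peel the discrete inner product down to integrals supported on the computational boundary and then to convert the single remaining flux integral on $\Gamma_h$ into the three boundary-integral terms by invoking the discrete coupling. Concretely, I would take the pair $(\bq_h^\varphi,u_h^\varphi)$ solving \eqref{scheme-augmented} with $f=0$, $u_0=0$ and datum $\varphi$ on $\Gamma$, test \eqref{Hdg discrete-1} with $\bv=\bq_h^\psi$ and \eqref{Hdg discrete-2} with $w=u_h^\psi$, solve the first identity for $\mc{A}_h(\bq_h^\varphi,\bq_h^\psi)$ and the second for $\mc{C}_h(u_h^\varphi,u_h^\psi)$, and add them to reconstruct $\innerh{\varphi}{\psi}$. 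Since $f=0$ and $\xi_0\circ\phi$ is supported on $\Gamma_h$, the functionals reduce to $\mc{F}_{1,h}(\bq_h^\psi)=-\langle\varphi\circ\phi,\bq_h^\psi\cdot\normalh\rangle_{\Gamma_h}$ and $\mc{F}_{2,h}(u_h^\psi)=-\langle\tau\,\varphi\circ\phi,u_h^\psi\rangle_{\Gamma_h}$, so this step already isolates the transfer terms $-\mc{A}_T(\bq_h^\varphi,\bq_h^\psi)$ and $\mc{B}_T(\bq_h^\varphi,u_h^\psi)$ and the $\tau$-boundary term $\langle\tau\,\varphi\circ\phi,u_h^\psi\rangle_{\Gamma_h}$, i.e. exactly terms D, E, F of the statement.

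The delicate point of this first step is that all contributions living on the interior skeleton $\mc{E}_h^\circ$ must cancel, leaving only integrals over $\partial\Omega_h$. I would organize this by inserting the explicit value $\hat u_h=\tfrac12\tau^{-1}\jump{\bq_h}+\prom{u_h}$ of the hybrid unknown on $\mc{E}_h^\circ$ (equivalently, working from the full system \eqref{HDG-method} and the numerical flux \eqref{numerical-trace}). Writing all $\partial\mc{T}_h$-integrals as edge sums and using that $\hat u_h$ is single valued, the stabilization brackets $\tfrac12\langle\tau^{-1}\jump{\bq_h^\varphi},\jump{\bq_h^\psi}\rangle$, the average–jump products $\langle\prom{u},\jump{\bq}\rangle$, and the products $\langle\tau\prom{u_h^\varphi},\prom{u_h^\psi}\rangle$ coming from \eqref{Ch-semidefinite} pair off and vanish, so that only the $\partial\Omega_h$ traces survive. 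Recognizing $\hat u_h=\varphi^{\bq_h}_0$ on $\partial\Omega_h$ and substituting the transferred datum \eqref{def:varphi_0} confirms the appearance of $\mc{A}_T$ and $\mc{B}_T$ and leaves the flux integral
\begin{equation*}
-\langle\varphi\circ\phi,\bq_h^\psi\cdot\normalh\rangle_{\Gamma_h}
\end{equation*}
as the only term not yet in boundary-integral form.

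The heart of the proof is rewriting this flux integral as terms A, B, C. I would first change variables through the boundary map $\phi$ to pass from $\Gamma_h$ to $\Gamma$, turning the integrand into $\varphi\,(\bq_h^\psi\circ\phi^{-1})\cdot\normalh$, and then split $\normalh=\bn+(\boldsymbol{n}_h-\bn)$; the part along $\boldsymbol{n}_h-\bn$ is precisely term C. For the $\bn$-component I would invoke the discrete transmission relation \eqref{lambda-qh}, $\lambda_n=-\mathbb{Q}_n^0\big((\bq_h^\psi\circ\phi^{-1})\cdot\bn\big)$, together with the discrete Neumann-to-Dirichlet solve $g_n=S_n\lambda_n$ from \eqref{int-identity_app} and the exact Poincar\'e–Steklov representation $\mc{V}^{-1}\big(\tfrac12-\mc{K}\big)$ already used at the continuous level in \eqref{eq:innerh-to-innerBEM}. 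Because $\varphi\in\mathbb{T}_n^0$ and $\mathbb{Q}_n^0$ is the $L^2(\Gamma)$-orthogonal projection onto $\mathbb{T}_n^0$, the fully projected contribution gives term A, while the mismatch between $\mc{V}^{-1}\big(\tfrac12-\mc{K}\big)$ and its discretely projected counterpart is transferred onto $\mc{V}^{-1}\varphi$ using the self-adjointness of the single-layer operator $\mc{V}$, producing the consistency term B with its factor $(\mathrm{Id}-\mathbb{Q}_n^0)$.

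The main obstacle is this last conversion. Unlike the continuous identity \eqref{eq:innerh-to-innerBEM}, where $\boldsymbol q\cdot\boldsymbol n$ equals $-\mc{V}^{-1}(\tfrac12-\mc{K})Tg$ exactly, at the discrete level the projections $\mathbb{Q}_n^0$ do not commute with $\mc{V}$, $\mc{K}$ and $S_n$, so one must keep the projection error as an identity (term B) rather than merely estimate it; this, together with the geometric change of variables and the normal mismatch $\boldsymbol{n}_h-\bn$, is the technical core. By contrast, the HDG reduction yielding D, E, F is routine once the skeleton cancellation is bookkept carefully, and I would devote most of the write-up to tracking the BEM operators and their projections exactly.
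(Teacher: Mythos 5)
Your proposal follows essentially the same route as the paper's proof: first test the HDG equations of the $\varphi$-problem against the $\psi$-solution to reduce $\innerh{\varphi}{\psi}$ to $-\langle\varphi\circ\phi,\bq_h^\psi\cdot\normalh\rangle_{\Gamma_h}$ plus the $\mc{A}_T$, $\mc{B}_T$ and $\tau$-boundary terms; then change variables through $\phi$, split $\normalh=\bn+(\normalh-\bn)$, insert $\mathbb{Q}_n^0$ using $\varphi\in\mathbb{T}_n^0$, pass to $\mathcal{V}^{-1}\varphi$ by self-adjointness of $\mathcal{V}$, add and subtract its projection, and invoke the discrete integral equation \eqref{int-identity_app} tested with $\mathbb{Q}_n^0(\mathcal{V}^{-1}\varphi)$ to produce terms A and B. The only point deserving extra care (in your write-up as in the paper's) is the precise pairing of test functions between the two HDG problems so that the non-symmetric $\mc{B}_h$ contributions cancel exactly, but the overall strategy is identical.
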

\begin{proof}

Let $\varphi,\psi \in \mathbb{T}^0_n$. By the definition of $\innerh{\varphi}{\psi}$ and the equations \eqref{scheme-augmented} satisfied by $(\boldsymbol q^\varphi_h, u_\varphi)$ and $(\boldsymbol q^\psi_h, u_\psi)$, it is possible to deduce the identity
\begin{align*}
\innerh{\varphi}{\psi} =&\mc{F}_{1,h}(\bq_h^\psi)-\mc{F}_{2,h}( u^{\psi}) -\mc{A}_T(\bq_h^\varphi, \bq_h^{\psi})+\mc{B}_T(\bq_h^\varphi, u^{\psi})\\
=& - \langle \varphi\circ\phi, \bq_h^\psi\cdot {\boldsymbol n_h} \rangle_{\Gamma_h}+\langle \tau \,  \varphi\circ\phi, u_h^{\psi} \rangle_{\Gamma_h}  -\mc{A}_T(\bq_h^\varphi, \bq_h^{\psi})+\mc{B}_T(\bq_h^\varphi, u^{\psi}).
\end{align*}

Now, since $\phi$ is a bijective mapping, we write the first term of the right hand side as follows:
\begin{align*}
 -\langle\varphi  \circ \phi,  \boldsymbol q^{\psi }_h \cdot {\boldsymbol n_h}\rangle_{\Gamma_h}  
 =& -\langle\varphi,  (\boldsymbol q^{\psi }_h  \circ \phi^{-1})\cdot {\boldsymbol n_h}\rangle_{\Gamma}\nonumber\\
  =&  -\langle\varphi,  (\boldsymbol q^{\psi }_h  \circ \phi^{-1})\cdot \boldsymbol{n}\rangle_{\Gamma}
   -\langle\varphi,  (\boldsymbol q^{\psi }_h  \circ \phi^{-1})\cdot ({\boldsymbol n_h}-\boldsymbol{n})\rangle_{\Gamma}\nonumber\\
  =&  -\langle\varphi,  \mathbb{Q}_n^0((\boldsymbol q^{\psi}_h  \circ \phi^{-1})\cdot \boldsymbol{n})\rangle_{\Gamma}
   -\langle\varphi,  (\boldsymbol q^{\psi }_h  \circ \phi^{-1})\cdot ({\boldsymbol n_h}-\boldsymbol{n})\rangle_{\Gamma},
\end{align*}
where we have added and subtracted ${\boldsymbol n_h}$ and used the fact that $\varphi \in \mathbb{T}_n^0$ in the last step.

We now conveniently rewrite the first term on the right hand. More precisely, since $\mathcal{V}$ is invertible and self-adjoint,
\begin{align*}
 \langle\varphi,  \mathbb{Q}_n^0((\boldsymbol q^{\psi }_h  \circ \phi^{-1})\cdot \boldsymbol{n})\rangle_{\Gamma}
 =& \langle\mathcal{V}^{-1}\varphi,  \mathcal{V}\mathbb{Q}_n^0((\boldsymbol q^{\psi }_h  \circ \phi^{-1})\cdot \boldsymbol{n})\rangle_{\Gamma}\nonumber\\
  =& \langle\mathbb{Q}_n^0(\mathcal{V}^{-1}\varphi),  \mathcal{V}\mathbb{Q}_n^0((\boldsymbol q^{\psi }_h  \circ \phi^{-1})\cdot \boldsymbol{n})\rangle_{\Gamma}\nonumber\\
  &+
  \langle(Id-\mathbb{Q}_n^0)(\mathcal{V}^{-1}\varphi),  \mathcal{V}\mathbb{Q}_n^0((\boldsymbol q^{\psi }_h  \circ \phi^{-1})\cdot \boldsymbol{n})\rangle_{\Gamma},
\end{align*}
where we added and subtracted $\mathbb{Q}_n^0(\mathcal{V}^{-1}\varphi)$. 

Then, taking $ \mathbb{Q}_n^0(\mathcal{V}^{-1}\varphi)$ as a test function in \eqref{int-identity_app} Neumann data $\lambda:= \mathbb{Q}_n^0(\boldsymbol q^{\varphi }_h  \circ \phi^{-1})$ and unique solution $g^\lambda:=S_n(\mathbb{Q}_n^0(\boldsymbol q^{\psi }_h  \circ \phi^{-1}))$, we have that
\begin{align*}
 \langle\varphi,  \mathbb{Q}_n^0((\boldsymbol q^{\psi }_h  \circ \phi^{-1})\cdot \boldsymbol{n})\rangle_{\Gamma}
  =& \left\langle\mathbb{Q}_n^0(\mathcal{V}^{-1}\varphi), \left(\frac{1}{2}-\mathcal{K}\right)g^\lambda \right\rangle_{\Gamma}+
  \langle(Id-\mathbb{Q}_n^0)(\mathcal{V}^{-1}\varphi,  \mathcal{V}\mathbb{Q}_n^0((\boldsymbol q^{\psi }_h  \circ \phi^{-1})\cdot \boldsymbol{n})\rangle_{\Gamma}\\
    =& \left\langle\varphi, \mathcal{V}^{-1}\mathbb{Q}_n^0\left(\frac{1}{2}-\mathcal{K}\right)g^\lambda \right\rangle_{\Gamma}+
  \langle(Id-\mathbb{Q}_n^0)(\mathcal{V}^{-1}\psi,  \mathcal{V}\mathbb{Q}_n^0((\boldsymbol q^{\psi }_h  \circ \phi^{-1})\cdot \boldsymbol{n})\rangle_{\Gamma},
\end{align*}

Gathering all the above identities, we obtain \eqref{identity-inner-prod1}.
\end{proof}

In the particular case of a circular interface $\Gamma$, the integral operators applied to trigonometric polynomials are also trigonometric polynomials. Therefore, we have the following identity.

\begin{crl}\label{cor:3}
Let us suppose that $\Gamma$ is a circular interface. For $\varphi,\psi \in \mathbb{T}^0_n$, there holds
\begin{align}\label{identity-inner-prod3}
\innerh{\varphi}{\psi}
=&
-\left\langle\varphi, \mathcal{V}^{-1}\left(\frac{1}{2}-\mathcal{K}\right)S_n(\mathbb{Q}_n^0(\boldsymbol q^{\psi }_h  \circ \phi^{-1})) \right\rangle_{\Gamma}
\nonumber
\\
  & -\langle\varphi,  (\boldsymbol q^{\psi }_h  \circ \phi^{-1})\cdot ({\boldsymbol n_h}-\boldsymbol{n})\rangle_{\Gamma}
  +\langle \tau \,  \varphi\circ\phi, u_h^{\psi} \rangle_{\Gamma_h} -\mc{A}_T(\bq_h^\varphi, \bq_h^{\psi})+\mc{B}_T(\bq_h^\varphi, u^{\psi}).
\end{align}

\end{crl}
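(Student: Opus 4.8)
The plan is to start directly from the identity \eqref{identity-inner-prod1} established in the preceding lemma and to show that the circular geometry forces exactly two of its ingredients to simplify: the inner interpolation operator $\mathbb{Q}_n^0$ sitting between $\mathcal{V}^{-1}$ and $\left(\tfrac{1}{2}-\mathcal{K}\right)$ may be deleted, and the entire cross term $\langle(Id-\mathbb{Q}_n^0)(\mathcal{V}^{-1}\varphi),\mathcal{V}\mathbb{Q}_n^0((\boldsymbol q^{\psi}_h\circ\phi^{-1})\cdot\boldsymbol n)\rangle_\Gamma$ vanishes, while the remaining four terms are left untouched. The crucial fact I would invoke is the one recorded just before the statement: when $\Gamma$ is a circle, parametrized with constant speed $|\boldsymbol y'|\equiv a$, the periodic operators $\mathcal{V}$, $\mathcal{V}^{-1}$ and $\mathcal{K}$ (hence $\tfrac{1}{2}-\mathcal{K}$) are convolution operators diagonalized by the Fourier basis $\{e^{\mathrm{i}kt}\}$, and therefore each of them maps $\mathbb{T}_n^0$ into $\mathbb{T}_n^0$. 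I would either record the explicit eigenrelations or cite \cite{SaVa2002}.

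For the vanishing of the cross term I would argue as follows. Since $\varphi\in\mathbb{T}_n^0$ and $\mathcal{V}^{-1}$ preserves $\mathbb{T}_n^0$ on the circle, the function $\mathcal{V}^{-1}\varphi$ is itself an element of $\mathbb{T}_n^0$. Because $\mathbb{Q}_n^0$ is a projection onto $\mathbb{T}_n^0$ and so reproduces every element of that space, it follows that $\mathbb{Q}_n^0(\mathcal{V}^{-1}\varphi)=\mathcal{V}^{-1}\varphi$, whence $(Id-\mathbb{Q}_n^0)(\mathcal{V}^{-1}\varphi)=0$ and the whole second term of \eqref{identity-inner-prod1} drops out.

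For the first term I would set $g^\lambda:=S_n(\mathbb{Q}_n^0(\boldsymbol q^{\psi}_h\circ\phi^{-1}))$, which lies in $\mathbb{T}_n^0$ by construction of the discrete Neumann-to-Dirichlet map $S_n$. On the circle the double-layer kernel is constant---a direct computation with $\boldsymbol y(t)=a(\cos t,\sin t)$ gives $K\equiv\tfrac{1}{4\pi a}$---so $\mathcal{K}$ reduces to (a multiple of) integration over $\Gamma$ and annihilates mean-zero trigonometric polynomials; thus $\left(\tfrac{1}{2}-\mathcal{K}\right)g^\lambda=\tfrac{1}{2}g^\lambda\in\mathbb{T}_n^0$. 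Consequently the argument of the inner $\mathbb{Q}_n^0$ already belongs to $\mathbb{T}_n^0$, the operator acts there as the identity, and it may be removed. Substituting these two simplifications into \eqref{identity-inner-prod1} produces precisely \eqref{identity-inner-prod3}.

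The main obstacle, and the only place that genuinely uses the circular hypothesis, is the claim that $\mathcal{V}^{-1}$ and $\left(\tfrac{1}{2}-\mathcal{K}\right)$ map $\mathbb{T}_n^0$ into itself. This rests on the constant-speed parametrization turning the operators into true convolutions; for a general smooth $\Gamma$ the Jacobian weight $|\boldsymbol y'|$ destroys the Fourier diagonalization, the images of trigonometric polynomials cease to be trigonometric polynomials, and neither simplification survives---which is exactly why the corollary is confined to the circular case. I would therefore take care to verify the eigenrelations explicitly, or to pin down the precise citation in \cite{SaVa2002}, and to confirm that $\mathbb{Q}_n^0$ reproduces $\mathbb{T}_n^0$, since these two elementary facts are what the whole argument stands on.
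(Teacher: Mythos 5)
Your proposal is correct and follows essentially the same route as the paper, which justifies the corollary only by the one-line remark preceding it (that on a circle the integral operators map trigonometric polynomials to trigonometric polynomials, so the projections $\mathbb{Q}_n^0$ act as the identity on $\mathcal V^{-1}\varphi$ and on $(\tfrac12-\mathcal K)S_n(\cdots)$, killing the $(Id-\mathbb{Q}_n^0)$ term and the inner $\mathbb{Q}_n^0$ in \eqref{identity-inner-prod1}). Your write-up simply supplies the Fourier-diagonalization details that the paper leaves implicit.
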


We recall that the interface $\Gamma$ has been introduced artificially and its shape can be chosen to facilitate computations. In particular, all the boundary integrals can be explicitly computed in the case of a circular interface. This actually the case of the numerical examples reported in \cite{CoSaSo2012}. From now on, for the sake of simplicity of the exposition, we will consider $\Gamma$ is a circular interface.

The next lemma provides a discrete version of the inequalities presented in Lemma \ref{lem:equiv}. 
To that end, let us first notice that the solution $u$ of \eqref{scheme-augmented-continuous} is actually in $H^1(\Omega)$. In addition, if we assume that $\boldsymbol{q}\in \mbf{H}^1(\Omega)$, we have the following stability estimate
\begin{equation}\label{ineq:ContDependence-extra}
\|\boldsymbol{q}\|_{1,\Omega} + \|u\|_{1,\Omega}
\leq C_{\text{stab}}\overline{\boldsymbol \kappa}^{1/2}\left( \|f\|_{0,\Omega} + \|g\|_{1/2,\Gamma} +    \|u_0\|_{1/2,\Gamma_0}\right).
\end{equation}
\begin{lem}\label{lem:equiv-discrete}
Let $g_n\in \mathbb{T}^0_n$ and assume \eqref{ineq:ContDependence-extra} holds true. We have that
\begin{alignat}{6}\label{new-estimate-gnh-to-gnhalf}
\normh{g_n}^2
 \leq& 
C_0(\tau)\|g_n\|_{1/2,\Gamma}^2,
\end{alignat}
where
\begin{align*}
C_0(\tau):=C (\underline{\kap}^{-1/2}+\underline{\tau}^{-1/2}+1+(\overline{\tau}^{1/2}+1)\overline{\tau}^{1/2})(C_{\mathrm{stab}}\overline{\boldsymbol \kappa}^{1/2} +1) + \overline{\tau}    
\end{align*}
and
\begin{align} \label{psi-phi-aux2}
\innerh{g_n}{\T^{h,n}g_n }
 \leq& -c\|\T^{h,n}g_n\|_{1/2,\Gamma}^2
 +C_1(h,\tau)\normh{g_n}^2,
\end{align}
with
\begin{align*}
C_1(h,\tau) : =& C\bigg(R_h \underline{\kap}^{-1}\overline{\kap}
 +\underline{\kap}^{-1}\overline{\kap}^{1/2}R_h h^{1/2} \overline{\tau}^{1/2}+\sigma h^{-1/2}\overline{\kap}^{1/2}\|({\boldsymbol n_h}-\boldsymbol{n})\|_{\infty,\Gamma} \bigg).
\end{align*}
Moreover,
\begin{alignat}{6}\label{Tgn1}
\normh{\T^{h,n}g_n}^2
\leq& C_0(\tau)c^{-1}\left(C_0(\tau)c^{-1}+ C_1(h,\tau)\right)\normh{g_n}^2
\end{alignat}
and
\begin{align}\label{Tgn2}
\normh{g_n}^2
    \leq&C_{PS}^2 \sigma^2 \normh{\T^{h,n} g_n}^2
   + C_1(h,\tau)\sigma^2
 \normh{g_n}^2.
\end{align}
\end{lem}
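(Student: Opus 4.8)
The four estimates are the discrete counterparts of \eqref{eq:TripleNormBound}--\eqref{eq:Tgleqg} in Lemma \ref{lem:equiv}, so my plan is to reproduce the four steps of that proof, replacing each continuous ingredient by its discrete analogue and dragging along the consistency error produced by the transfer technique. Throughout I would exploit that, by \eqref{triplenormh}, $\normh{g_n}^2\le \J(\bq_h^{g_n},u_h^{g_n})^2$, while $\|\kap^{-1/2}\bq_h^{g_n}\|_{\Omega_h}\le\normh{g_n}$ and $\|\tau^{1/2}u_h^{g_n}\|_{\partial\Omega_h}\le\normh{g_n}$, so that the HDG energy quantity $\J$ and the norm $\normh{\cdot}$ are interchangeable up to absolute constants. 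For \eqref{new-estimate-gnh-to-gnhalf} I would split $\J(\bq_h^{g_n},u_h^{g_n})\le \J(\bq^{g_n}-\bq_h^{g_n},u^{g_n}-u_h^{g_n})+\J(\bq^{g_n},u^{g_n})$, bound the first summand by the a~priori estimate \eqref{ineq:Jqu}, and bound the second directly via $\|\kap^{-1/2}\bq^{g_n}\|_{\Omega_h}\le\underline{\kap}^{-1/2}|\bq^{g_n}|_{1,\Omega}$ and a trace inequality onto $\partial\Omega_h\subset\Omega$ for $\|\tau^{1/2}u^{g_n}\|_{\partial\Omega_h}$. Since $(\bq^{g_n},u^{g_n})$ solves the interior problem with $f=0$, $u_0=0$ and datum $g_n$ on $\Gamma$, the enhanced stability bound \eqref{ineq:ContDependence-extra} turns every $|\bq^{g_n}|_{1,\Omega}$ and $\|u^{g_n}\|_{1,\Omega}$ into $C_{\mathrm{stab}}\overline{\boldsymbol\kappa}^{1/2}\|g_n\|_{1/2,\Gamma}$, and collecting the $\tau$- and $\kap$-dependent factors reproduces $C_0(\tau)$.

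Estimate \eqref{psi-phi-aux2} is the heart of the lemma and the place I expect the real work. I would apply the identity \eqref{identity-inner-prod3} of Corollary \ref{cor:3} with $\varphi=\T^{h,n}g_n$ and $\psi=g_n$, using that $S_n(\mathbb{Q}_n^0(\bq_h^{g_n}\circ\phi^{-1}))=\T^{h,n}g_n$ and the symmetry of $\innerh{\cdot}{\cdot}$. The leading term is then $-\langle\T^{h,n}g_n,\mathcal V^{-1}(\tfrac12-\mathcal K)\T^{h,n}g_n\rangle_\Gamma$, which the coercivity \eqref{ineq:coerc} bounds by $-c\|\T^{h,n}g_n\|_{1/2,\Gamma}^2$, exactly mimicking the continuous \eqref{eq:innerh-to-innerBEM}. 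The remaining four terms of \eqref{identity-inner-prod3} are perturbations with no continuous analogue, and each must be shown to be $O(C_1(h,\tau))\normh{g_n}^2$: the normal-mismatch term is controlled with the extrapolated trace bound \eqref{ineq:traceGamma} and the factor $\|\boldsymbol{n}_h-\boldsymbol{n}\|_{\infty,\Gamma}$; the two transfer forms $\mc{A}_T,\mc{B}_T$ are handled by the off-diagonal versions of \eqref{boundAT}--\eqref{boundBT}, producing $R_h\underline{\kap}^{-1}\overline{\kap}$ and $\underline{\kap}^{-1}\overline{\kap}^{1/2}R_hh^{1/2}\overline{\tau}^{1/2}$; and the stabilization boundary term $\langle\tau(\T^{h,n}g_n)\circ\phi,u_h^{g_n}\rangle_{\Gamma_h}$ is estimated by Cauchy--Schwarz against $\|\tau^{1/2}u_h^{g_n}\|_{\partial\Omega_h}\le\normh{g_n}$. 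The crucial mechanism is that every cross contribution carrying a factor of $\T^{h,n}g_n$ also carries one of the \emph{small} quantities $R_h$ or $h^{-1/2}\|\boldsymbol{n}_h-\boldsymbol{n}\|_{\infty,\Gamma}$; hence, after Young's inequality and the bound $\normh{\T^{h,n}g_n}^2\le C_0(\tau)\|\T^{h,n}g_n\|_{1/2,\Gamma}^2$ coming from \eqref{new-estimate-gnh-to-gnhalf}, the induced $\|\T^{h,n}g_n\|_{1/2,\Gamma}^2$ terms can be absorbed into the coercive $-c\|\T^{h,n}g_n\|_{1/2,\Gamma}^2$ once $h$ is small, leaving precisely $C_1(h,\tau)\normh{g_n}^2$.

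Estimates \eqref{Tgn1} and \eqref{Tgn2} then follow the continuous derivations of \eqref{eq:gleqTg} and \eqref{eq:Tgleqg} almost verbatim. For \eqref{Tgn1} I would chain $\normh{\T^{h,n}g_n}^2\le C_0(\tau)\|\T^{h,n}g_n\|_{1/2,\Gamma}^2$ (estimate \eqref{new-estimate-gnh-to-gnhalf} applied to $\T^{h,n}g_n$), then use \eqref{psi-phi-aux2} rearranged together with the Cauchy--Schwarz bound $|\innerh{g_n}{\T^{h,n}g_n}|\le\normh{g_n}\normh{\T^{h,n}g_n}$ and one Young's inequality to solve the resulting quadratic inequality in $\normh{\T^{h,n}g_n}$. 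For \eqref{Tgn2} I would instead take $\varphi=\psi=g_n$ in \eqref{identity-inner-prod3}, bound the leading term by continuity of the Poincar\'e--Steklov operator, $|\langle g_n,\mathcal V^{-1}(\tfrac12-\mathcal K)\T^{h,n}g_n\rangle_\Gamma|\le C_{PS}\|g_n\|_{1/2,\Gamma}\|\T^{h,n}g_n\|_{1/2,\Gamma}$, convert both $H^{1/2}(\Gamma)$-norms into $\normh{\cdot}$ through \eqref{Ltwptonormh}, and bound the same four perturbation terms in their diagonal form by $C_1(h,\tau)\normh{g_n}^2$.

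The main obstacle is undoubtedly estimate \eqref{psi-phi-aux2}: unlike the continuous case, where $\inner{g}{Tg}$ collapses to a single coercive pairing, here one must track the four transfer/geometry perturbation terms and arrange the Young splittings so that all $\|\T^{h,n}g_n\|_{1/2,\Gamma}^2$-type contributions are swallowed by the coercive term while only \emph{first powers} of $R_h$, $h^{1/2}\overline{\tau}^{1/2}$ and $h^{-1/2}\|\boldsymbol{n}_h-\boldsymbol{n}\|_{\infty,\Gamma}$ survive in $C_1(h,\tau)$. Keeping these constants honest, and invoking the admissibility hypotheses $R_h\to0$ and $\|\boldsymbol{n}_h-\boldsymbol{n}\|_\infty=o(h^{1/2})$ to license the absorption for $h$ small, is the delicate part of the argument.
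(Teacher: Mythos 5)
Your proposal follows essentially the same route as the paper's proof: the same triangle-inequality split of $\normh{g_n}$ into the discretization error (bounded via \eqref{ineq:Jqu}) plus the continuous solution (bounded via \eqref{ineq:ContDependence-extra}); the identity of Corollary \ref{cor:3} combined with the coercivity \eqref{ineq:coerc}, the transfer bounds \eqref{boundAT}--\eqref{boundBT} and the normal-mismatch estimate for \eqref{psi-phi-aux2}; and the same chaining through Cauchy--Schwarz, \eqref{Ltwptonormh} and Young's inequality for \eqref{Tgn1} and \eqref{Tgn2}. Your explicit absorption of the residual $\normh{\T^{h,n}g_n}^2$ perturbation into the coercive term for small $h$ is a point the paper glosses over, so if anything your account is slightly more careful.
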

\begin{proof}
To prove \eqref{ineq:ContDependence-extra} we start by using the definition of the norm $\triple{\cdot}_h$ to compute
\begin{alignat*}{6}
\normh{g_n}^2 =&\,
 \|\kap^{-1/2} \bq_h^{g_n}\|_{\Omega_h}^2 + \frac{1}{2} \| \tau^{-1/2} \jump{\bq_h^{g_n}}\|_{\mc{E}_h^{\circ}}^2+ 
\|\tau^{1/2} (u_h^{g_n} - \prom{u_h^{g_n}})\|_{\partial \mc{T}_h\setminus \partial \Omega_h}^2
+ \|\tau^{1/2} u_h^{g_n}\|_{ \partial \Omega_h}^2.\\
\intertext{However, since $u^{g_n}\in H^1(\Omega)$ and $\boldsymbol q^{g_n}\in H(\text{div},\Omega)$ it follows that}
\normh{g_n}^2 \leq&\,
 \|\kap^{-1/2} (\bq_h^{g_n}-\bq^{g_n})\|_{\Omega_h}^2 + \frac{1}{2} \| \tau^{-1/2} \jump{\bq_h^{g_n}-\bq^{g_n}}\|_{\mc{E}_h^{\circ}}+ 
\|\tau^{1/2} ((u_h^{g_n}-u^{g_n}) - \prom{u_h^{g_n}-u^{g_n}})\|_{\partial \mc{T}_h\setminus \partial \Omega_h}^2\\
&\,
+ \|\tau^{1/2} (u_h^{g_n}-u^{g_n})\|_{ \partial \Omega_h}^2
+\|\kap^{-1/2} \bq^{g_n}\|_{\Omega_h}^2
+ \|\tau^{1/2}u^{g_n}\|_{ \partial \Omega_h}^2\\
=&\,
\J(\bq-\bq_h,u-u_h)
+\|\kap^{-1/2} \bq^{g_n}\|_{\Omega_h}^2
+ \|\tau^{1/2}u^{g_n}\|_{ \partial \Omega_h}^2 \quad\qquad\text{(By the definition \eqref{def:J})}\\
=&\,
\J(\bq-\bq_h,u-u_h)
+\|\kap^{-1/2} \bq^{g_n}\|_{\Omega_h}^2
+ \|\tau^{1/2}u^{g_n}\|_{ \partial \Omega_h}^2 \quad\qquad\text{(By  \eqref{ineq:ContDependence})}\\
\leq&\,
C (\underline{\kap}^{-1/2}+\underline{\tau}^{-1/2}h^{1/2}+1) |\bq|_{1,\Omega}
+(\overline{\tau}^{1/2}+1)\overline{\tau}^{1/2} |u|_{1,\Omega} \!\qquad\text{(By  \eqref{ineq:Jqu})}\\
&+\,\|\kap^{-1/2} \bq^{g_n}\|_{\Omega_h}^2 
+ \|\tau^{1/2}u^{g_n}\|_{ \partial \Omega_h}^2 \\
\leq& C (\underline{\kap}^{-1/2}+\underline{\tau}^{-1/2}h^{1/2}+1+(\overline{\tau}^{1/2}+1)\overline{\tau}^{1/2})(C_{\text{stab}}\overline{\boldsymbol \kappa}^{1/2} +1)\|g_n\|_{1/2,\Gamma}^2
+ \overline{\tau}\|g_n\|_{\Gamma}^2 \qquad \text{(By \eqref{ineq:ContDependence-extra})},
\end{alignat*}
which implies  \eqref{new-estimate-gnh-to-gnhalf}.

Now, let $\varphi,\psi \in \mathbb{T}_n^0$. By the previous Corollary \ref{cor:3}, the Cauchy-Schwarz inequality and the continuity properties of the operators $\mc{A}_T$ and $\mc{B}_T$ (cf. \eqref{boundAT} and \eqref{boundBT}), and denoting by $C$ a generic positive constant independent of the discretization parameters, we can deduce that
\begin{align*}
\innerh{\varphi}{\psi}
 \leq& -\left\langle\psi, \mathcal{V}^{-1}\left(\frac{1}{2}-\mathcal{K}\right)S_n(\mathbb{Q}_n^0(\boldsymbol q^{\varphi }_h  \circ \phi^{-1})) \right\rangle_{\Gamma}
 -\langle\psi,  (\boldsymbol q^{\varphi }_h  \circ \phi^{-1})\cdot ({\boldsymbol n_h}-\boldsymbol{n})\rangle_{\Gamma}\\
 &+CR_h \underline{\kap}^{-1}\overline{\kap}\|\kap^{-1/2}\bq_h^{\varphi}\|_{\Omega_h}
 \|\kap^{-1/2}\bq_h^{\psi }\|_{\Omega_h}\\
 &+C\underline{\kap}^{-1}\overline{\kap}^{1/2}R_h h^{1/2} \overline{\tau}^{1/2}
\left(\frac{1}{2}\|\kap^{-1/2}\bq_h^{\varphi}\|_{\Omega_h}^2 +\frac{1}{2}\|\tau^{1/2}u_h^{\psi}\|_{\partial\Omega_h}^2\right)
\\
 \leq& -\left\langle\psi, \mathcal{V}^{-1}\left(\frac{1}{2}-\mathcal{K}\right)S_n(\mathbb{Q}_n^0(\boldsymbol q^{\varphi }_h  \circ \phi^{-1})) \right\rangle_{\Gamma}
 -\langle\psi,  (\boldsymbol q^{\varphi }_h  \circ \phi^{-1})\cdot ({\boldsymbol n_h}-\boldsymbol{n})\rangle_{\Gamma}\\
 &+\frac{1}{2}C\bigg(R_h \underline{\kap}^{-1}\overline{\kap}
 +\underline{\kap}^{-1}\overline{\kap}^{1/2}R_h h^{1/2} \overline{\tau}^{1/2}\bigg)\bigg(
\normh{\varphi}^2+\normh{\psi}^2\bigg).
\end{align*}

For the second term on the right hand side we have that
\begin{align*}
 -\langle\psi,  (\boldsymbol q^{\varphi }_h  \circ \phi^{-1})\cdot ({\boldsymbol n_h}-\boldsymbol{n})\rangle_{\Gamma}
 \leq &
 \|\psi\|_{\Gamma} \|\boldsymbol q^{\varphi }_h  \circ \phi^{-1}\|_{\Gamma}\| ({\boldsymbol n_h}-\boldsymbol{n})\|_{\infty,\Gamma}\\
\leq &\|\psi\|_{\Gamma} h^{-1/2}\overline{\kap}^{1/2}\|\kap^{-1/2}\boldsymbol{q}_h^{\varphi}\|_{\Omega_h} \|({\boldsymbol n_h}-\boldsymbol{n})\|_{\infty,\Gamma}
\\
\leq & \sigma h^{-1/2}\overline{\kap}^{1/2}  \|({\boldsymbol n_h}-\boldsymbol{n})\|_{\infty,\Gamma} \normh{\varphi} \normh{\psi},
\end{align*}
where in the last inequality we employed \eqref{Ltwptonormh} and the definition of $\normh{\cdot}$.
Hence,
\begin{align}\label{psi-phi-aux}
\innerh{\varphi}{\psi}
  \leq& -\left\langle\psi, \mathcal{V}^{-1}\left(\frac{1}{2}-\mathcal{K}\right)S_n(\mathbb{Q}_n^0(\boldsymbol q^{\varphi }_h  \circ \phi^{-1})) \right\rangle_{\Gamma}
\nonumber\\
 &+\frac{1}{2}C\bigg(R_h \underline{\kap}^{-1}\overline{\kap}
 +\underline{\kap}^{-1}\overline{\kap}^{1/2}R_h h^{1/2} \overline{\tau}^{1/2}+\sigma h^{-1/2}\overline{\kap}^{1/2}\|({\boldsymbol n_h}-\boldsymbol{n})\|_{\infty,\Gamma} \bigg)\bigg(
\normh{\varphi}^2+\normh{\psi}^2\bigg).
\end{align}

Now, by setting $\varphi=g_n$ and $\psi=\T^{h,n}g_n=S_n(\mathbb{Q}_n^0(\boldsymbol q^{g_n }_h  \circ \phi^{-1}))$ and recalling that $\innerh{\cdot}{\cdot}$ is symmetric, \eqref{psi-phi-aux} implies \eqref{psi-phi-aux2}.

On the other hand, \eqref{psi-phi-aux2} implies
\begin{alignat}{6}\label{aux10}
\|\T^{h,n}g_n\|_{1/2,\Gamma}^2 
\leq&
 c^{-1}\normh{g_n^0}\normh{\T^{h,n}g_n} +c^{-1} C_1(h,\tau) \normh{g_n}^2.
\end{alignat}
Then, by \eqref{new-estimate-gnh-to-gnhalf} and Young's inequality we obtain
\begin{alignat*}{6}
\normh{\T^{h,n}g_n}^2
\leq& C_0(\tau)\|\T^{h,n}g_n\|_{1/2,\Gamma}^2
\leq C_0(\tau)c^{-2} \normh{g_n}^2 +\normh{\T^{h,n}g_n}^2 
+c^{-1} C_1(h,\tau)C_0(\tau)\normh{g_n}^2
\end{alignat*}
and \eqref{Tgn1} follows.

Finally, taking $g_n=\psi=\varphi$ in \eqref{psi-phi-aux}, the definition of $C_1(h,\tau)$ and \eqref{Ltwptonormh}, we obtain
\begin{align*}
\normh{g_n}^2
 \leq& -\left\langle g_n, \mathcal{V}^{-1}\left(\frac{1}{2}-\mathcal{K}\right)S_n(\mathbb{Q}_n^0(\boldsymbol q^{g_n }_h  \circ \phi^{-1})) \right\rangle_{\Gamma}
 +C_1(h,\tau)
 \|g_n\|_{\Gamma}^2\\
 =&-\left\langle g_n, \mathcal{V}^{-1}\left(\frac{1}{2}-\mathcal{K}\right)\T^{h,n}g_n \right\rangle_{\Gamma}
 +C_1(h,\tau)
 \|g_n\|_{\Gamma}^2\\
 \leq&C_{PS} \sigma\|g_n\|_{1/2,\Gamma} \normh{T^{h,n} g_n}
 +C_1(h,\tau)\sigma^2
 \normh{g_n}^2\\
  \leq&C_{PS}\sigma^2\normh{g_n} \normh{\T^{h,n}}+
C_1(h,\tau)\sigma^2
 \normh{g_n}^2\\
   \leq& \frac{1}{2} \normh{ g_n }^2 +\frac{1}{2} C_{PS}^2 \sigma^2 \normh{\T^{h,n} g_n}^2
   +
 C_1(h,\tau)\sigma^2
 \normh{g_n}^2,
\end{align*}
which implies \eqref{Tgn2}.

\end{proof}

Similarly to the case of the operator $\Tomega$, we define the operator
\begin{align*}
\Tomega^{h,n}:\mathbb{T}_n^0 \longrightarrow&\,\mathbb{T}_n^0 \\
g_n^0 \mapsto&\, \Tomega^{h,n} g_n^0:= \omega \T^{h,n} g_n^0 + (1-\omega)g_n^0.
\end{align*}

We can now use the previous lemmas to prove the main result of this communication, namely the convergence of the iterative procedure.

\begin{thm}
If the mesh parameter $h$ is small enough, it is possible to find values of the relaxation parameter $\omega$ in the interval $(0,1)$  for which the discrete operator $\Tomega^{h,n}$ is a contraction. Therefore, the iterative procedure \eqref{eq:RelaxedIterativeProcess} converges.
\end{thm}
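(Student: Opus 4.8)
The plan is to transcribe, at the discrete level, the contraction computation already performed for the continuous operator $T_\omega$, replacing the four bounds of Lemma \ref{lem:equiv} by their discrete counterparts \eqref{new-estimate-gnh-to-gnhalf}, \eqref{psi-phi-aux2}, \eqref{Tgn1} and \eqref{Tgn2} from Lemma \ref{lem:equiv-discrete}, and to keep careful track of the perturbation constant $C_1(h,\tau)$, which encodes the geometric mismatch between $\Gamma_h$ and $\Gamma$. The decisive structural fact is that, under the admissibility hypotheses on $(\Omega_h,\mathcal T_h)$ (namely $R_h\to0$ and $\|\boldsymbol n_h-\boldsymbol n\|_\infty=o(h^{1/2})$), the constant $C_1(h,\tau)\to0$ as $h\to0$, so that for $h$ small the discrete estimates collapse onto the continuous ones and the argument of the continuous theorem survives the introduction of the BEM coupling and the transfer of data across the non-touching grids.

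First I would use that $\normh{\cdot}$ is induced by the inner product $\innerh{\cdot}{\cdot}$ together with $\Tomega^{h,n}=\omega\T^{h,n}+(1-\omega)I$ and the symmetry of $\innerh{\cdot}{\cdot}$ to expand
\begin{equation*}
\normh{\Tomega^{h,n}g_n}^2 = \omega^2\normh{\T^{h,n}g_n}^2 + (1-\omega)^2\normh{g_n}^2 + 2\omega(1-\omega)\innerh{g_n}{\T^{h,n}g_n}.
\end{equation*}
The first term is controlled directly by \eqref{Tgn1}, giving $\normh{\T^{h,n}g_n}^2\leq\gamma(h,\tau)\normh{g_n}^2$ with $\gamma(h,\tau):=C_0(\tau)c^{-1}(C_0(\tau)c^{-1}+C_1(h,\tau))$, a constant that stays bounded as $h\to0$.

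The main work is to turn the cross term into a strictly negative multiple of $\normh{g_n}^2$. For this I would combine \eqref{psi-phi-aux2}, which yields $\innerh{g_n}{\T^{h,n}g_n}\leq -c\|\T^{h,n}g_n\|_{1/2,\Gamma}^2 + C_1(h,\tau)\normh{g_n}^2$, with the lower bound on $\|\T^{h,n}g_n\|_{1/2,\Gamma}$ furnished by the Poincar\'e--Steklov step underlying \eqref{Tgn2}, namely $(1-C_1(h,\tau)\sigma^2)\normh{g_n}\leq C_{PS}\sigma\|\T^{h,n}g_n\|_{1/2,\Gamma}$ (the discrete analogue of \eqref{eq:Tgleqg}). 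Squaring this and substituting produces $\innerh{g_n}{\T^{h,n}g_n}\leq -\tilde c(h,\tau)\normh{g_n}^2$, where $\tilde c(h,\tau):=c\,(1-C_1(h,\tau)\sigma^2)^2/(C_{PS}\sigma)^2 - C_1(h,\tau)$; since $C_1(h,\tau)\to0$, for $h$ sufficiently small $\tilde c(h,\tau)>0$, which is exactly the negativity needed.

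Collecting the three contributions gives $\normh{\Tomega^{h,n}g_n}^2\leq\widehat C_h(\omega)\normh{g_n}^2$ with the quadratic $\widehat C_h(\omega):=\gamma(h,\tau)\omega^2+(1-\omega)^2-2\tilde c(h,\tau)\omega(1-\omega)$. As in the continuous proof, $\widehat C_h(0)=1$ and $\widehat C_h'(0)=-2(1+\tilde c(h,\tau))<0$, so there is $\omega^*>0$ with $0<\widehat C_h(\omega)<1$ for every $\omega\in(0,\omega^*)$; hence $\Tomega^{h,n}$ is a contraction on $\mathbb T_n^0$ in the norm $\normh{\cdot}$, and by Banach's fixed point theorem it has a unique fixed point, which the relaxed iteration \eqref{eq:RelaxedIterativeProcess} reaches. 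Invoking the trivial discrete analogue of Lemma \ref{lem:observation} (a fixed point of $\Tomega^{h,n}$ is a fixed point of $\T^{h,n}$) identifies this limit with the coupled HDG--BEM solution. I expect the main obstacle to be securing the negative cross term \emph{uniformly} for small $h$: one must ensure both $\tilde c(h,\tau)>0$ and that the admissible interval $(0,\omega^*)$ does not shrink to a point as $h\to0$, and both rest on $C_1(h,\tau)\to0$. The most delicate contribution is the term $\sigma h^{-1/2}\overline{\kap}^{1/2}\|\boldsymbol n_h-\boldsymbol n\|_{\infty,\Gamma}$ in $C_1$, where the boundary-normal approximation error must beat the inverse power of $h$ coming from the extrapolated trace inequality \eqref{ineq:traceGamma}—precisely the role of the hypothesis $\|\boldsymbol n_h-\boldsymbol n\|_\infty=o(h^{1/2})$.
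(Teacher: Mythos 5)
Your proposal is correct and follows essentially the same route as the paper: the same expansion of $\normh{\Tomega^{h,n}g_n}^2$, the same reliance on the estimates of Lemma \ref{lem:equiv-discrete}, the same observation that $C_1(h,\tau)\to 0$ under the admissibility hypotheses, and the same quadratic-in-$\omega$ conclusion via Banach's fixed point theorem. The only (harmless) deviations are bookkeeping: you fold the Poincar\'e--Steklov lower bound directly into the cross term rather than passing through \eqref{new-estimate-gnh-to-gnhalf} and \eqref{Tgn2} separately, and you certify $\widehat C_h(\omega)<1$ near $\omega=0$ from $\widehat C_h(0)=1$ and $\widehat C_h'(0)<0$ instead of computing the minimizer $\omega_m$ explicitly.
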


\begin{proof}

Let $g_n\in \mathbb{T}_n^0$. By employing the estimates in Lemma \ref{lem:equiv-discrete}
\begin{align*}
 \normh{\Tomega^{h,n} g_n}^2 =&\,
  \omega^2\normh{\T^{h,n} g_n}^2 
  +(1-\omega)^2 \normh{ g_n}^2
  +2\,\omega(1-\omega)\,\innerh{g_n}{\T^{h,n}g_n }\nonumber\\
  \leq&\,
   \omega^2 C_0(\tau)c^{-1}\left(C_0(\tau)c^{-1}+ C_1(h,\tau)\right)\normh{g_n}^2
  +(1-\omega)^2 \normh{ g_n}^2\\
  &\,-2c\,\omega(1-\omega)\,\|\T^{h,n}g_n\|_{1/2,\Gamma}^2
  + 2\omega(1-\omega) C_1(h,\tau)\normh{g_n}^2\\
    \leq&\,
   \omega^2 C_0(\tau)c^{-1}\left(C_0(\tau)c^{-1}+ C_1(h,\tau)\right)\normh{g_n}^2
  +(1-\omega)^2 \normh{ g_n}^2\\
  &\,-2c\,\omega(1-\omega)\,
C_0(\tau) \normh{\T^{h,n}g_n}^2
  + 2\omega(1-\omega) C_1(h,\tau)\normh{g_n}^2.
  \end{align*}
where in the last inequality we made use of \eqref{new-estimate-gnh-to-gnhalf}. Then, by \eqref{Tgn2},
\begin{align*}
 \normh{\Tomega^{h,n} g_n}^2
    \leq&\,
   \omega^2 C_0(\tau)c^{-1}\left(C_0(\tau)c^{-1}+ C_1(h,\tau)\right)\normh{g_n}^2
  \\
  &\,+(1-\omega)^2 \normh{ g_n}^2 -2c\,\omega(1-\omega)\,
C_0(\tau) C_{PS}^{-2} \sigma^{-2}\normh{g_n}^2\\ 
&\,+2c\,\omega(1-\omega)\,\sigma^{-2}
C_0(\tau)C_{PS}^{-2}  C_1(h,\tau)
 \normh{g_n}^2
  + 2\omega(1-\omega) C_1(h,\tau)\normh{g_n}^2\\
  = &\, \widehat C^{h,n}(\omega)  \normh{g_n}^2,
  \end{align*}

where
\begin{align*}
\widehat C^{h,n}(\omega) : = &\,
   \omega^2 C_0(\tau)c^{-1}\left(C_0(\tau)c^{-1}+ C_1(h,\tau)\right)
  +(1-\omega)^2 
  -2c\,\omega(1-\omega)\,
C_0(\tau) C_{PS}^{-2} \sigma^{-2}\\ 
&+2c\,\omega(1-\omega)\,
C_0(\tau)C_{PS}^{-2} \sigma^{-2} C_1(h,\tau)
  + 2\omega(1-\omega) C_1(h,\tau).
\end{align*}
  
Analogously to the analysis of the continuous operator, we observe that $\widehat C^{h,n}(\omega)$ is of the form
\[
\widehat C^{h,n}(\omega) = \alpha\omega^2 + \beta\omega +1,
\]
with
\begin{align*}
\alpha :=&\, 1 + \left(\frac{C_0(\tau)}{c}\right)\left(\frac{C_0(\tau)}{c} + C_1(h,\tau)\right) 
 +2\left(\frac{cC_0(\tau)}{(C_{PS}\sigma)^2}\left(1-C_1(h,\tau)\right)-C_1(h,\tau)\right),\\
\beta :=&\, -2\left(1-C_1(h,\tau)\right)\left(\frac{cC_0(\tau)}{(C_{PS}\sigma)^2} + 1\right).
\end{align*}

The extreme value for $\widehat C^{h,n}(\omega)$ is attained at
\[
\omega = \omega_m : = -\frac{\beta}{2\alpha}. 
\]
Since $C_1(h,\tau)$ vanishes as $h\to 0$, for a fine enough mesh it will hold that $\alpha>0$ and $\beta<0$. Therefore, $\omega_m$  will belong to the interval $(0,1)$ and will in fact be a minimizer of $\widehat C^{h,n}$. Moreover, since $\widehat C^{h,n}(0)=1$ and $\widehat C^{h,n}$ is decreasing in $(0,\omega_m)\subset(0,1)$, we conclude that it is possible to choose $\omega\in(0,1)$ such that $\Tomega^{h,n}$ is contractive. For these values of $\omega$, the convergence of the iterative process \eqref{eq:RelaxedIterativeProcess} follows from Banach's fixed-point theorem.
\end{proof}

We note that for the case of a fitted geometry (i.e. whenever $\Omega\equiv\Omega_h$) the distance parameter $R_h=0$. This implies that $C_1(h,\tau)=0$ and then 
\[
\widehat C^{h,n}(\omega)  = \left(\left(\frac{1+\overline\tau}{c}\right)^2 +\frac{2c(1+\overline\tau)}{(C_{PS}\sigma)^2} +1\right)\omega^2 -2 \left(1+\frac{c(1+\overline\tau)}{(C_{PS}\sigma)^2}\right)\omega +1,
\]
in coincidence with the continuous case. Above, the presence of the parameter $\overline\tau$ stems from the discretization, while the absence of factors involving $\boldsymbol\kappa$ is due to the choice of discrete norms.

\color{black}

\section*{Acknowledgments}

The authors have no relevant financial or non-financial interests to disclose. All authors have contributed equally to the article and the order of authorship has been determined alphabetically. Tonatiuh S\'anchez--Vizuet was partially supported by the National Science Foundation throught the grant NSF-DMS-2137305 ``LEAPS-MPS: Hybridizable discontinuous Galerkin methods for non-linear integro-differential boundary value problems in magnetic plasma confinement''.
Manuel Solano was supported by ANID--Chile through Fondecyt 1200569 and by Centro de Modelamiento Matemático (CMM), ACE210010 and FB210005, BASAL funds for center of excellence from ANID-Chile. \\

\noindent\rule{7cm}{1pt}\\
\noindent \textbf{Contact information}\\
Nestor S\'anchez: {\tt nestor\_sanchez@im.unam.mx}\\
Tonatiuh S\'anchez--Vizuet: {\tt tonatiuh@math.arizona.edu}\\
Manuel E. Solano: {\tt msolano@ing-mat.udec.cl}

\appendix
\setcounter{lem}{0}
\renewcommand{\thelem}{\Alph{section}\arabic{lem}}

\section{HDG projection.}\label{app:HDG-proj}

 Given constants $l_u, l_{\mbf{q}} \in [0,k]$,  $T\in \mc{T}_h$ and  a pair of functions $(\boldsymbol q,u) \in H^{1+l_q}(T) \times H^{1+l_u}(T)$,
  by \cite{CoGoSa2010} there is a constant $C>0$ independent of $T$ and $\tau$ such that   
    \begin{subequations}\label{C0:eq:projection_error}
    \begin{align}
	\|\bsy{\Pi}_{\mathrm v}\mbf{q} - \mbf{q}\|_T &\lesssim  h_T^{l_{\mbf{q}}+1} |\mbf{q}|_{l_{\mbf{q}}+1,T} +   h_T^{l_u+1} \tau_T^* |u|_{l_u+1,T}, \label{C0:error_projector1}\\
	\|\Pi_{\mathrm w} u - u\|_T &\lesssim  h_T^{l_u+1} |u|_{l_u+1,T} +  \dfrac{h_T^{l_{\mbf{q}}+1}}{\tau_T^{\max}} |\nabla \cdot \mbf{q}|_{l_{\mbf{q}}T}\label{C0:error_projector2},
	\end{align}
    \end{subequations}
where $\tau_T^* := \max \tau|_{\partial T \setminus F^*}$ and $F^*$ is a face of $T$ at which $\tau|_{\partial T}$ is maximum. As is customary, the symbol $|\cdot|_{H^s}$ is to be understood as the Sobolev semi norm of order $s\in\mathbb R$. 
Now, in the context of the unfitted HDG method, the projection errors  in $\Omega_h^c$ satisfies (Lemma 3.8 \cite{CoQiuSo2014})
\begin{subequations}
\begin{align*}
    	\|\bsy{\Pi}_{\mathrm v}\mbf{q} - \mbf{q}\|_{\Omega_h^c} &\lesssim  R_h^{1/2} 	\|\bsy{\Pi}_{\mathrm v}\mbf{q} - \mbf{q}\|_{\Omega_h}
    	+ h^{l_{\mbf{q}}+1} |\mbf{q}|_{l_{\mbf{q}},\Omega_h}.
\end{align*}
\end{subequations}
	
\bibliography{biblio}
\bibliographystyle{abbrv}
\include{biblio}


\end{document}